\title{On the stable Cannon Conjecture}
\author{Steve Ferry}
            \email{sferry@math.rutgers.edu}
          \urladdr{http://www.math.rutgers.edu/~sferry/}
          \address{Department of Mathematics\\Rutgers University\\
 Hill Center, Busch Campus\\
 Piscataway, NJ 08854--8019, U.S.A.}
\author{Wolfgang L\"uck}
        \email{wolfgang.lueck@him.uni-bonn.de}
          \urladdr{http://www.him.uni-bonn.de/lueck}
        \address{Mathematisches Institut der Universit\"at Bonn\\
                Endenicher Allee 60\\
                53115 Bonn, Germany}
\author{Shmuel Weinberger}
            \email{shmuel@math.uchicago.edu}
          \urladdr{http://www.math.uchicago.edu/\%7Eshmuel/}
          \address{Department of Mathematics\\
                   University of Chicago\\
                   5734 S. University Avenue
                   Chicago, IL 60637--151, U.S.A.}
         \date{March, 2019}
\keywords{Cannon Conjecture, hyperbolic groups, Poincare duality groups}
    \subjclass[2010]{20F67, 57M99, 57P10}
\DeclareMathAlphabet\EuR{U}{eur}{m}{n}
\SetMathAlphabet\EuR{bold}{U}{eur}{b}{n}
\theoremstyle{plain}
\newtheorem{theorem}{Theorem}[section]
\newtheorem{lemma}[theorem]{Lemma}
\newtheorem{conjecture}[theorem]{Conjecture}
\theoremstyle{definition}
\newtheorem{definition}[theorem]{Definition}
\newtheorem{example}[theorem]{Example}
\newtheorem{remark}[theorem]{Remark}
\global\let\c@equation=\c@theorem}
\newcommand{\comsquare}[8]                   
{\begin{CD}
#1 @>#2>> #3\\
@V{#4}VV @V{#5}VV\\
#6 @>#7>> #8
\end{CD}
}
\newcommand{\xycomsquare}[8]                   
{\xymatrix
{#1 \ar[r]^{#2} \ar[d]^{#4} &
#3 \ar[d]^{#5}  \\
#6\ar[r]^{#7} &
#8
}
}
\newcommand{\xycomsquareminus}[8]                      
{\xymatrix{#1 \ar[r]^-{#2} \ar[d]^-{#4} &
#3 \ar[d]^-{#5}  \\
#6\ar[r]^-{#7} &
#8
}
}
\newcommand{\caln}{{\mathcal N}}
\newcommand{\calv}{\mathcal{V}}
\newcommand{\calw}{\mathcal{W}}
\newcommand{\calfj}{{\mathcal F}\!{\mathcal J}}
\newcommand{\IQ}{{\mathbb Q}}
\newcommand{\IR}{{\mathbb R}}
\newcommand{\IZ}{{\mathbb Z}}
\newcommand{\bfE}{{\mathbf E}}
\newcommand{\bfe}{{\mathbf e}}
\newcommand{\bfi}{{\mathbf i}}
\newcommand{\bfL}{{\mathbf L}}
\newcommand{\bfpr}{{\mathbf {pr}}}
\newcommand{\bfsym}{{\mathbf {sym}}}
\newcommand{\curs}{\EuR}
\newcommand{\Or}{\curs{Or}}
\newcommand{\SPECTRA}{\curs{SPECTRA}}
\newcommand{\ENR}{\operatorname{ENR}}
\newcommand{\ANR}{\operatorname{ANR}}
\newcommand{\asmb}{\operatorname{asmb}}
\newcommand{\CAT}{\operatorname{CAT}}
\newcommand{\cent}{\operatorname{cent}}
\newcommand{\colim}{\operatorname{colim}}
\newcommand{\diam}{\operatorname{diam}}
\newcommand{\id}{\operatorname{id}}
\newcommand{\inti}{\operatorname{int}}
\newcommand{\im}{\operatorname{im}}
\newcommand{\pr}{\operatorname{pr}}
\newcommand{\tors}{\operatorname{tors}}
\newcommand{\Wh}{\operatorname{Wh}}
\newcommand{\sym}{\operatorname{sym}}
\newcommand{\twotor}{/2\text{-}\!\tors}
\newcommand{\pt}{\{\bullet\}}
\newcommand{\higherlim}[3]{{\setbox1=\hbox{\rm lim}
        \setbox2=\hbox to \wd1{\leftarrowfill} \ht2=0pt \dp2=-1pt
        \mathop{\vtop{\baselineskip=5pt\box1\box2}}
        _{#1}}^{#2}#3}
\newcommand{\version}[1]                       
{\begin{center} last edited on #1\\
last compiled on \today\\
name of texfile: \jobname
\end{center}
}
\newcounter{commentcounter}
\begin{document}

\typeout{---------------------------- flw_cannon.tex ----------------------------}


\typeout{------------------------------------ Abstract
  ----------------------------------------}

\begin{abstract} The Cannon Conjecture for a torsion-free hyperbolic group $G$ with
  boundary homeomorphic to $S^2$ says that $G$ is the fundamental group of an aspherical
  closed $3$-manifold $M$.  It is known that then $M$ is a hyperbolic $3$-manifold.  We
  prove the stable version that for any closed manifold $N$ of dimension greater or equal to $2$  there
  exists a closed manifold $M$ together with a simple homotopy equivalence
  $M \to N \times BG$. If $N$ is aspherical and $\pi_1(N)$ satisfies the
  Farrell-Jones Conjecture, then $M$ is unique up to homeomorphism.  
\end{abstract}

\maketitle


\typeout{------------------------------- Section 0: Introduction
  --------------------------------}

\setcounter{section}{-1}
\section{Introduction}


\subsection{The motivating conjectures by Wall and Cannon}
\label{subsec:The_motivating_conjectures_by_Wall_and_cannon}

This paper is motivated by the following two conjectures which will be reviewed in
Sections~\ref{sec:Short_review_of_Poincare_duality_groups} and~\ref{sec:Short_review_of_the_Cannon_Conjecture}. 

\begin{conjecture}[A Conjecture on Poincar\'e duality groups and closed aspherical 
  $3$-manifolds by Wall]\label{con:Poincare_duality_groups_of_dimension_three}
  Every Poincar\'e duality group of dimension $3$ is the fundamental group of an
  closed aspherical $3$-manifold.
\end{conjecture}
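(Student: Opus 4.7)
The plan is to attack Wall's conjecture by splitting a general $\mathrm{PD}^3$-group $G$ along incompressible $\mathrm{PD}^2$ subgroups into geometric pieces, realizing each piece by a $3$-manifold with boundary, and then glueing back to a closed aspherical $3$-manifold. The starting observation is that, by the definition of a $\mathrm{PD}^3$-group together with standard finiteness results, $BG$ is homotopy equivalent to a finite orientable Poincar\'e complex of formal dimension $3$; this supplies the algebraic substitute for the putative manifold.

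First I would invoke a JSJ-style decomposition for $\mathrm{PD}^3$-groups (Kropholler, Dunwoody--Swenson, Scott--Swarup) to write $G$ as the fundamental group of a finite graph of groups with $\mathrm{PD}^2$ edge groups, whose vertex groups are either of Seifert type (essentially a central extension of a $\mathrm{PD}^2$-group by $\IZ$, plus small variations) or atoroidal. The Seifert vertices can be handled by classical results: such groups are known to be realized by Seifert-fibered $3$-manifolds (possibly with boundary), matching the combinatorial structure prescribed by the JSJ graph.

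The atoroidal vertices are the real content. For each such $G_v$, I would first try to establish that $G_v$ is word-hyperbolic, and then that $\partial G_v \cong S^2$ as follows from the Poincar\'e duality structure of $G_v$ together with the Bestvina--Mess identification of the boundary with an equivariantly spherical object. At that point, the Cannon Conjecture would provide a closed hyperbolic realization of $BG_v$. Reassembling along the JSJ tori, and appealing to geometrization to check that the gluings are admissible, would then yield the desired closed aspherical $3$-manifold $M$ with $\pi_1(M) = G$.

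The hard part will be exactly the atoroidal/hyperbolic case, and in particular the appeal to the Cannon Conjecture, which is itself open; showing even the preliminary step that an atoroidal $\mathrm{PD}^3$-group is hyperbolic is already a substantial gap. This is presumably why the paper proves a stable version instead: crossing $BG$ with any closed manifold $N$ of dimension $\geq 2$ moves the problem into a dimension where surgery theory --- or rather the Farrell--Jones Conjecture controlling the algebraic $L$-theory assembly of $\pi_1(N) \times G$ --- becomes available, trading geometrization for algebraic topology. A direct attack on Wall's conjecture in dimension $3$ has no surgery exact sequence to rely on, which is the central obstacle any proof would have to overcome.
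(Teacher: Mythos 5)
The statement you were asked to prove is labelled a \emph{conjecture} in the paper (Conjecture~\ref{con:Poincare_duality_groups_of_dimension_three}), and the paper offers no proof of it; it remains open. Your write-up is therefore not a proof and, to your credit, you say so yourself. What you have written is the standard reduction program, which the paper reviews in Section~\ref{sec:Short_review_of_Poincare_duality_groups}: the algebraic torus theorem of Dunwoody--Swenson (Theorem~\ref{the:atoroidal}) splits a $\mathrm{PD}^3$-group into Seifert, splittable, and atoroidal cases; Bowditch's Theorem~\ref{the:center} handles the Seifert case; and the atoroidal case would follow from the Weak Hyperbolization Conjecture combined with the Cannon Conjecture via Theorem~\ref{the_hyperbolic_boundary_sphere}. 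Both of those last two inputs are open, and that is precisely the gap you flag.

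To sharpen one point: even granting that a $\mathrm{PD}^3$-group splits along $\mathrm{PD}^2$ (not $\IZ^2$) subgroups into geometric pieces, the gluing step is not as routine as your sketch suggests. One has to check that the peripheral structure of each vertex realization matches the edge groups combinatorially and that the resulting closed $3$-manifold is aspherical; in the manifold world this is handled by geometrization, but for abstract $\mathrm{PD}^3$-pairs the peripheral structure is itself an extra piece of data that a JSJ theorem for groups does not automatically provide in manifold-compatible form. So even the ``easy'' reassembly half of your program has nontrivial content. The correct conclusion, which you draw, is that there is no current proof; the paper instead proves a stable statement (Theorem~\ref{the:stable_Cannon_Conjecture}) where crossing with a closed $N$ of dimension $\ge 2$ puts one in the range where surgery and the Farrell--Jones Conjecture apply, sidestepping both hyperbolization and Cannon.
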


\begin{conjecture}[Cannon Conjecture in the torsion-free case]%
\label{con:Cannon_conjecture_in_the_torsionfree_case}
  Let $G$ be a torsion-free hyperbolic group. Suppose that its boundary is homeomorphic to
  $S^2$.

  Then $G$ is the fundamental group of a closed hyperbolic $3$-manifold.
\end{conjecture}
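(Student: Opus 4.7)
The plan is to show that the $G$-action on $\partial G \cong S^2$ is topologically conjugate to a Möbius action on $S^2$; Sullivan's theorem on uniform convergence groups on the $2$-sphere will then promote this to a discrete, faithful, cocompact representation $G \hookrightarrow \Isom(\IH^3)$, and torsion-freeness will deliver the closed hyperbolic $3$-manifold $M := \IH^3/G$ with $\pi_1(M) = G$.

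First I would set up the convergence-group framework. Since $G$ is word-hyperbolic with metrizable Gromov boundary, the canonical action on $\partial G$ is a uniform convergence group action (Bowditch, Freden). Fix a visual metric $d$ on $\partial G$ (Coornaert); then $(\partial G, d)$ is Ahlfors $Q$-regular for some $Q \geq 2$, and $G$ acts by uniformly quasi-Möbius homeomorphisms. The problem is that a priori $Q > 2$ and the metric $d$ is far from round.

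Next I would invoke the Bonk--Kleiner quasisymmetric uniformization theorem: an Ahlfors $2$-regular metric $2$-sphere is quasisymmetrically equivalent to the standard round $S^2$ if and only if it is linearly locally connected (equivalently, $2$-Loewner in its quasisymmetric gauge). The central task is to produce a metric in the quasisymmetric gauge of $(\partial G, d)$ meeting these hypotheses. For this I would follow Cannon's program: construct a sequence of shinglings of $\partial G$ from the shadows of Gromov balls in the Cayley graph of $G$, and use the combinatorial Riemann mapping theorem of Cannon (refined by Cannon--Floyd--Parry and Cannon--Swenson) to show that the combinatorial moduli of annuli separating disjoint continua in $\partial G$ are uniformly controlled. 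This combinatorial $2$-Loewner property would upgrade the shinglings to an Ahlfors $2$-regular linearly locally connected metric in the quasisymmetric gauge of $d$, so that by Bonk--Kleiner $\partial G$ is quasisymmetrically the round $S^2$.

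With $\partial G$ identified quasisymmetrically with the round $S^2$, the induced $G$-action is uniformly quasiconformal. By the Sullivan--Tukia--Hinkkanen rigidity theorem for uniform quasiconformal groups on $S^2$, the action is quasiconformally, hence topologically, conjugate to a Möbius action. The Poincaré extension then produces an isometric $G$-action on $\IH^3$; the uniform convergence condition on $S^2$ forces this action to be properly discontinuous and cocompact on $\IH^3$. Since $G$ is torsion-free, $M := \IH^3/G$ is a closed hyperbolic $3$-manifold with fundamental group $G$.

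The principal obstacle is the combinatorial $2$-Loewner step, or equivalently the claim that the Ahlfors-regular conformal dimension of $\partial G$ equals $2$ and is attained. This is the metric core of Cannon's conjecture; every other ingredient is essentially available in the literature. Alternative routes would require either a Gabai-style direct topological construction of an aspherical $3$-manifold realizing $G$ (then invoking Perelman's geometrization to promote it to a hyperbolic structure), or a proof of Wall's Conjecture~\ref{con:Poincare_duality_groups_of_dimension_three} in the hyperbolic case via a bare-hands analysis of the $\mathbb{Z}[G]$-chain complex of $\eub{G}$ together with controlled surgery in dimension $3$ — both of which are themselves unresolved.
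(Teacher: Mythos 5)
There is a genuine gap, and you have in fact named it yourself: the statement you are asked about is Conjecture~\ref{con:Cannon_conjecture_in_the_torsionfree_case}, which is \emph{open}; the paper does not prove it and says so explicitly (see Remark~\ref{rem:Thurston_does_not_imply_Cannon}), proving instead only the stable statements of Theorem~\ref{the:Vanishing_of_the_surgery_obstruction} and Theorem~\ref{the:stable_Cannon_Conjecture} by a completely different route (total surgery obstruction, Farrell--Jones isomorphisms, $\ENR$ homology-manifold surgery and Quinn's resolution obstruction). What you have written is a correct and well-informed \emph{reduction} of the conjecture to its known core, not a proof. The chain convergence-group action on $\partial G$ $\Rightarrow$ quasisymmetric uniformization of $\partial G$ as the round $S^2$ $\Rightarrow$ Sullivan--Tukia rigidity for uniformly quasiconformal groups $\Rightarrow$ cocompact lattice in $\Isom(\IH^3)$ $\Rightarrow$ $M = \IH^3/G$ is indeed the standard Cannon--Swenson/Bonk--Kleiner program, and each implication after the uniformization step is available in the literature. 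But the uniformization step is exactly the unsolved problem: you must show that the quasisymmetric gauge of a visual metric on $\partial G$ contains an Ahlfors $2$-regular (equivalently, that the Ahlfors-regular conformal dimension equals $2$ and is attained), or equivalently establish the uniform combinatorial modulus estimates of Cannon's combinatorial Riemann mapping theorem for the shinglings coming from the group. No argument is offered for this, and your own closing paragraph concedes that it is ``the metric core of Cannon's conjecture.'' A proposal whose decisive step is the conjecture itself cannot be accepted as a proof.

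For comparison: the paper deliberately sidesteps this analytic core. Its methods (vanishing of the simple surgery obstruction for a degree-one normal map $M \to BG$, and, after crossing with a closed manifold $N$ of dimension $\geq 2$, genuine surgery to a simple homotopy equivalence, plus the boundary construction via pulling back the $Z$-set $\partial G$) yield only that $BG \times N$ is simple homotopy equivalent to a closed manifold, together with the structure statement for $\widehat{M}$; they do not and cannot recover the $3$-dimensional conclusion, precisely because surgery fails to improve a normal map to a homotopy equivalence in dimension $3$. Conversely, your route, if the $2$-Loewner/conformal-dimension step were ever established, would give the full Conjecture~\ref{con:Cannon_conjecture_in_the_torsionfree_case} (and with it the paper's results trivially, as explained in Subsection~\ref{subsec:Cannon_implies_main_Theorem}). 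As it stands, however, your submission documents the state of the art rather than closing it.
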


We will investigate whether these conjectures are true stably. More precisely, we ask
whether for any closed smooth manifold $N$ of dimension $\ge 2$ the product $BG \times N$
is simple homotopy equivalent to a closed smooth manifold.  Notice that
for a torsionfree hyperbolic group $G$ there is a finite $CW$-complex model for $BG$ by the Rips complex.
The Whitehead group of $G$ is known to be trivial, so the simple homotopy type of $BG$ is well-defined.
We will also consider the analogous questions in the in the PL and topological categories.


\subsection{The main results}
\label{subsec:The_main_results}

In the sequel $\underline{\IR^a}$ denotes the trivial $a$-dimensional vector bundle.

\begin{theorem}[Vanishing of the surgery obstruction]%
\label{the:Vanishing_of_the_surgery_obstruction}
  Let $G$ be a hyperbolic $3$-dimensional Poincar\'e duality group.

  Then there exist a closed smooth 3-manifold M and a normal map of degree one (in the sense of surgery theory)
  \[
    \xymatrix{TM \oplus \underline{\IR^a} \ar[r]^-{\overline{f}} \ar[d] & \xi \ar[d]
      \\
      M \ar[r]^-f & BG }
  \]
  satisfying

  \begin{enumerate}

  \item\label{the:Vanishing_of_the_surgery_obstruction:BG} The space BG is a finite
    $3$-dimensional $CW$-complex;

  \item\label{the:Vanishing_of_the_surgery_obstruction:homology} The map
    $H_n(f;\IZ) \colon H_n(M;\IZ) \xrightarrow{\cong} H_n(BG;\IZ)$ is bijective for all
    $n \ge 0$;

  \item\label{the:Vanishing_of_the_surgery_obstruction:surgery_obstruction} The simple
    algebraic surgery obstruction $\sigma(f,\overline{f}) \in L_3^s(\IZ G)$ vanishes.

  \end{enumerate}
\end{theorem}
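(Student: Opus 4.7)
The plan is to build $(M, f, \overline{f})$ in three steps, one for each conclusion.

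For (1) I would use that $G$ is torsion-free hyperbolic to produce a finite $CW$-model of $BG$ via the Rips complex; since $G$ is a three-dimensional Poincar\'e duality group one has $\cd(G) = 3$ and $G$ is of type $FP$, so Wall's finiteness criterion lets me reduce to a finite three-dimensional $CW$-model of $BG$.

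For (2) I use that the resulting $BG$ is an orientable finite three-dimensional Poincar\'e complex. By the classical theorem of Hendriks, the Spivak normal fibration of any such complex admits a stable vector bundle reduction $\xi$, and thus there exists a closed smooth 3-manifold $M_0$ together with a degree-one normal map $(f_0, \overline{f}_0) \colon M_0 \to BG$ covered by a stable bundle isomorphism $TM_0 \oplus \underline{\IR^a} \cong f_0^*\xi$. Next I perform surgery on $M_0$ below the middle dimension (that is, in dimensions $0$ and $1$) to make $f_0$ a $\pi_1$-isomorphism, preserving the normal map data. Since $M_0$ and $BG$ are then orientable $PD_3$-complexes with the same fundamental group and $f_0$ has degree one, Poincar\'e duality together with the $\pi_1$-isomorphism forces $f_0$ to induce an isomorphism on all integral homology groups.

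For (3) the task is to arrange that the simple surgery obstruction $\sigma(f, \overline{f}) \in L_3^s(\IZ G)$ vanishes (recall that $L_3^s = L_3^h$ because $\Wh(G) = 0$). Up to normal bordism the set of degree-one normal maps to $BG$ is a torsor over the group of normal invariants $[BG, G/TOP]$, and moving within this torsor by $\xi \in [BG, G/TOP]$ changes the surgery obstruction by $\theta(\xi)$, where $\theta \colon [BG, G/TOP] \to L_3^s(\IZ G)$ is the difference surgery obstruction map. Since $G$ is hyperbolic, the Farrell--Jones Conjecture of Bartels--L\"uck--Reich identifies $L_3^s(\IZ G)$ with $H_3(BG;\bfL^s(\IZ))$, and the cofibration $\bfL\langle 1\rangle \to \bfL \to H\IZ$ together with the fact that $BG$ is three-dimensional identifies the cokernel of $\theta$ with a quotient of $H_3(BG; \IZ) \cong \IZ$ (an index-type invariant). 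The remaining verification is that the image of $\sigma(f_0, \overline{f}_0)$ in this cokernel vanishes; this should follow from the fact that $f_0$ is a degree-one map between oriented $PD_3$-complexes with matched fundamental classes. Then choosing $\xi \in [BG, G/TOP]$ with $\theta(\xi) = -\sigma(f_0, \overline{f}_0)$ and altering the normal invariant accordingly yields a normal map with vanishing surgery obstruction; re-running surgery below the middle dimension on the new normal map preserves the homology isomorphism property from (2).

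The main obstacle will be step (3): identifying the image of $\theta$ precisely via Farrell--Jones and checking that the degree-one hypothesis kills the residual index-like component of the surgery obstruction, so that modification by a suitable normal invariant genuinely brings $\sigma$ to zero.
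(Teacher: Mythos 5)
Your steps (1) and (2) are essentially sound. For (1), the Rips complex plus the vanishing of $\widetilde{K}_0(\IZ G)$ (via Farrell--Jones for hyperbolic groups) and Wall's finiteness criterion do give a finite $3$-dimensional model; the paper does the same. For (2), the reduction of the Spivak fibration to a vector bundle is available in dimension three (the paper argues via $w_1,w_2$ and the Wu formula rather than citing Hendriks, and is careful to allow a nontrivial $w_1(BG)$, which you silently suppress); and it is true that a degree-one map between oriented $PD_3$-complexes which is a $\pi_1$-isomorphism is forced by surgery-kernel duality to be a $\IZ$-homology isomorphism, so surgery below the middle dimension and Poincar\'e duality do what you want. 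Note, though, that the paper obtains conclusion (2) differently: first it produces a normal map with vanishing surgery obstruction and then invokes the Freedman--Quinn dimension-three result to upgrade it to a $\IZ G$-homology equivalence.

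The genuine gap is in step (3). You correctly locate the residual obstruction in the cokernel of the difference-surgery-obstruction map, which by the Atiyah--Hirzebruch spectral sequence for the cofibre $\bfL/\bfL\langle 1\rangle$ and Poincar\'e duality is $L_0(\IZ) \cong \IZ$; this is precisely the total surgery obstruction $s(BG)$ of the paper. But the claim that the degree-one hypothesis makes this element vanish is not correct and is exactly where all the hard work lives. The quantity $s(X)\in L_0(\IZ)$ is a well-defined homotopy invariant of the finite aspherical Poincar\'e complex $X$ computed from any degree-one normal map in $\caln(X)$; degree one is already built into its definition and does not force $s(X)=0$. It is the Quinn resolution obstruction of the associated $\ENR$ homology manifold (Theorem~\ref{the:Relating_the_total_surgery_obstruction_and_Quinn's_obstruction}), and for aspherical Poincar\'e complexes with Farrell--Jones fundamental group and nonempty normal invariant set it can a priori be any integer. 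The paper kills it by a long geometric detour: first one produces (via Theorem~\ref{the:FJ_and_Borel-existence}, after crossing with $T^3$ to get into dimensions $\ge 6$) a closed aspherical $\ENR$ homology manifold $M$ with the DDP homotopy equivalent to $BG\times T^3$; then the ``pulling back boundaries'' machinery of Sections~\ref{sec:pulling_back_boundaries}--\ref{sec:Recognizing_the_structure_of_a_manifold_with_boundary}, the $Z$-set compactification by $\partial G\cong S^2$, and Lemma~\ref{lem:Quinn_obstruction_and_boundary} show that the covering $\widehat M$ has Quinn index $1$ because it bounds (after $Z$-set compactification) an $\ENR$ homology manifold whose boundary is the actual manifold $S^2\times T^3$; this forces $M$ to be a genuine topological manifold, hence $s^{\sym}(BG\times T^3)=0$ by Theorem~\ref{the:symmetric_total_surgery_obstruction}; finally the product formula (Theorem~\ref{the:product_formula} or Remark~\ref{the:Special_case_of_Theorem_ref(the:product_formula)}) peels off the $T^3$ factor to conclude $s^{\sym}(BG)=0$. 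None of this apparatus is reproduced or replaced in your proposal, and without it there is no justification for the vanishing of the residual index in step (3).
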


Notice that the vanishing of the surgery obstruction does not imply that we can arrange by
surgery that $f$ is a simple homotopy equivalence since this works only in dimensions
$\ge 5$. In dimension $3$ we can achieve at least a $\mathbb Z G$-homology equivalence.
See \cite[Theorem 11.3A]{Freedman-Quinn(1990)}.  

However, if we cross the normal map with a closed manifold $N$ of dimension $\ge 2$, the
resulting normal map has also vanishing surgery obstruction by the product formula and
hence can be transformed by surgery into a simple homotopy equivalence. Thus
Theorem~\ref{the:Vanishing_of_the_surgery_obstruction} implies
assertion~\eqref{the:stable_Cannon_Conjecture:simple} of
Theorem~\ref{the:stable_Cannon_Conjecture} below; the proof of
assertion~\eqref{the:stable_Cannon_Conjecture:universal_covering} of
Theorem~\ref{the:stable_Cannon_Conjecture} below will require more work.

\begin{theorem}[Stable Cannon Conjecture]\label{the:stable_Cannon_Conjecture}
  Let $G$ be a hyperbolic $3$-dimensional Poincar\'e duality group.  Let $N$ be any
  smooth, PL or topological manifold respectively which is closed and whose dimension is
  $\ge 2$.

  Then there is a closed smooth, PL or topological manifold $M$ and a normal map
  of degree one
  \[
    \xymatrix{TM \oplus \underline{\IR^a} \ar[d]\ar[r]^-{\underline{f}} & \xi \times TN
      \ar[d]
      \\
      M\ar[r]^-f & BG \times N }
  \]
  satisfying

  \begin{enumerate}

  \item\label{the:stable_Cannon_Conjecture:simple} The map $f$ is a simple homotopy
    equivalence;

  \item\label{the:stable_Cannon_Conjecture:universal_covering} Let $\widehat{M} \to M$ be
    the $G$-covering associated to the composite of the isomorphism
    $\pi_1(f) \colon \pi_1(M) \xrightarrow{\cong} G \times \pi_1(N)$ with the projection
    $G \times \pi_1(N) \to G$. Suppose additionally that $N$ is aspherical, 
    $\dim(N) \ge 3$, and $\pi_1(N)$ is a Farrell-Jones group.

    Then $\widehat{M}$ is homeomorphic to $\IR^3 \times N$. Moreover, there is a compact topological
    manifold $\overline{\widehat{M}}$ whose interior is homeomorphic to $\widehat{M}$ and
    for which there exists a homeomorphism of pairs
    $(\overline{\widehat{M}},\partial \overline{\widehat{M}}) \to (D^3 \times N, S^2
    \times N)$.

  \end{enumerate}
\end{theorem}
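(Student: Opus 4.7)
For the first assertion~\eqref{the:stable_Cannon_Conjecture:simple} the plan is immediate: apply Theorem~\ref{the:Vanishing_of_the_surgery_obstruction} to obtain a degree-one normal map $(f_0,\overline{f}_0)\colon M_0 \to BG$ whose simple algebraic surgery obstruction in $L_3^s(\IZ G)$ vanishes, and cross it with the identity of $N$. By the product formula for surgery obstructions, the surgery obstruction of the product normal map $M_0 \times N \to BG \times N$ also vanishes, this time in $L_{3+\dim N}^s(\IZ[G \times \pi_1(N)])$. Since $\dim(BG \times N) = 3 + \dim N \ge 5$, classical high-dimensional surgery converts this product normal map into a simple homotopy equivalence $f\colon M \to BG \times N$, exactly as anticipated in the remark following Theorem~\ref{the:Vanishing_of_the_surgery_obstruction}.

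For assertion~\eqref{the:stable_Cannon_Conjecture:universal_covering} I first observe that $\pi_1(\widehat{M}) \cong \pi_1(N)$ and that $f$ lifts to a proper $G$-equivariant simple homotopy equivalence $\widehat{f}\colon \widehat{M} \to EG \times N$, with $EG$ a contractible, free, cocompact $G$-$CW$-complex (e.g.\ the Rips complex of $G$). Since $EG$ is contractible we have $\widehat{M} \simeq N$, and the task is to upgrade this to a homeomorphism $\widehat{M} \cong \IR^3 \times N$ together with a compact manifold pair $(\overline{\widehat{M}}, \partial \overline{\widehat{M}}) \cong (D^3 \times N, S^2 \times N)$. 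My proposed model for the compactification is the Bestvina-Mess $Z$-compactification $\overline{EG} = EG \cup \partial G = EG \cup S^2$ of the Rips complex; its product $\overline{EG} \times N$ is a compact ANR whose ``collar'' $S^2 \times N$ is the intended boundary at infinity of $\widehat{M}$.

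The plan is then to run a controlled/bounded surgery argument for $\widehat{f}$ parametrized over $\overline{EG} \times N$, with control governed by a $G$-equivariant pseudometric on $EG$ that shrinks to zero towards $\partial G$. The algebraic engine is the Farrell-Jones Conjecture for $G \times \pi_1(N)$: this group satisfies FJ because $G$ is hyperbolic (Bartels-Lück-Reich), $\pi_1(N)$ is FJ by hypothesis, and the class of FJ groups is closed under direct products. Via the controlled (or bounded) form of FJ, this forces the relevant parametrized structure set to be trivial, so $\widehat{f}$ is controlled-equivalent to the identity of $EG \times N$. Since $\dim \widehat{M} = 3 + \dim N \ge 6$ by the hypothesis $\dim N \ge 3$, Quinn's end theorem together with the Chapman-Ferry $\alpha$-approximation theorem then produce a compact topological manifold $\overline{\widehat{M}}$ with interior $\widehat{M}$ and boundary a manifold approximation of $S^2 \times N$; a final application of topological rigidity for the aspherical manifold $N$ identifies the boundary with $S^2 \times N$ and the interior with $\IR^3 \times N$. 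This is broadly in the spirit of the Bartels-Lück-Weinberger treatment of torsion-free hyperbolic groups with high-dimensional spherical boundary.

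The principal obstacle is that $\overline{EG}$ itself is not known to be a topological manifold --- proving that it is would be exactly the original Cannon Conjecture --- but only a homology $3$-manifold with boundary in the sense of Bestvina-Mess. Consequently one cannot naively pull a manifold structure back along $\widehat{f}$. What rescues the argument is precisely the stability in $N$: the extra factor of dimension $\ge 3$ both raises the ambient dimension into the stable range for end and approximation theorems and, via the Farrell-Jones hypothesis on $\pi_1(N)$, supplies the parametrized rigidity needed to kill the relevant controlled structure set. The real technical work will be setting up this mismatch correctly --- a homology manifold model on one side, an honest manifold on the other --- and verifying that the controlled/bounded assembly map governing the compactification of $\widehat{M}$ is indeed the one covered by Farrell-Jones for $G \times \pi_1(N)$.
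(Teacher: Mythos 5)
For assertion~\eqref{the:stable_Cannon_Conjecture:simple} your plan is exactly the paper's (and is already spelled out in the remark following Theorem~\ref{the:Vanishing_of_the_surgery_obstruction}): cross a normal map to $BG$ with vanishing obstruction with $N$, invoke the product formula, and run high-dimensional surgery. No issues there, modulo using Theorem~\ref{the:Vanishing_of_the_surgery_obstruction} as input.

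For assertion~\eqref{the:stable_Cannon_Conjecture:universal_covering}, however, the central step of your plan is ill-posed. You want $\widehat{f}$ to be ``controlled-equivalent to the identity of $EG \times N$'' and then feed that into Quinn's end theorem and the Chapman--Ferry $\alpha$-approximation theorem. But $EG = P_l(G)$ is a Rips complex: a finite-dimensional but high-dimensional polyhedron, not a $3$-manifold and not an $\ENR$ homology manifold of dimension $3$. So there is no ``identity'' manifold structure on $EG\times N$ to be controlled-equivalent to, controlled surgery does not produce a structure set with that target, and the $\alpha$-approximation theorem requires both source and target to be manifolds. You flag the right obstacle --- $\overline{EG}$ is not known to be a manifold --- but the invocation of ``stability in $N$'' does not actually resolve it, and Quinn's end theorem carries its own tameness and $K$-theoretic hypotheses for the end of $\widehat{M}$ which you have not verified.

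The paper's route avoids end theory entirely. It starts not from the surgered manifold $M$ of part~\eqref{the:stable_Cannon_Conjecture:simple}, but from a closed $\ENR$ homology manifold $M_0$ with DDP homotopy equivalent to $BG\times N$ supplied by Theorem~\ref{the:FJ_and_Borel-existence}. It then uses the bespoke ``pulling back boundaries'' construction of Section~\ref{sec:pulling_back_boundaries} to manufacture a compactification $\overline{\widehat{M_0}}$ of the $G$-cover whose boundary is $S^2\times N$ and is a $Z$-set, and proves (Theorem~\ref{the:adding_boundary}) that $\overline{\widehat{M_0}}$ is a compact $\ENR$ homology manifold with boundary. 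Because the boundary is already an honest manifold and $\dim \ge 5$, Lemma~\ref{lem:Quinn_obstruction_and_boundary} forces Quinn's \emph{resolution} obstruction $\iota(\widehat{M_0})=1$; by locality $\iota(M_0)=1$, so $M_0$ and $\overline{\widehat{M_0}}$ are genuine topological manifolds by Edwards--Quinn. A homotopy equivalence of pairs to $(D^3\times N, S^2\times N)$ which is already a homeomorphism on the boundary is then upgraded to a homeomorphism of pairs by the relative Borel Conjecture, and finally $M\cong M_0$ by Theorem~\ref{the:Borel}. If you want to pursue your controlled-end variant you would need to make sense of controlled rigidity over a non-manifold control space and supply the tameness and finiteness input to the end theorem --- neither of which your sketch provides.
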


We call a group $G$ a \emph{Farrell-Jones-group} if it satisfies the Full Farrell-Jones
Conjecture.  We will review what is known about the class of Farrell-Jones groups in
Theorem~\ref{the:status_of_the_Full_Farrell-Jones_Conjecture}.  For now, we mention that hyperbolic groups,  
CAT(0)-groups, and the fundamental groups of
 (not necessarily compact) $3$-manifolds (possibly with boundary) are Farrell-Jones
groups.

We have the following uniqueness statement.

\begin{theorem}[Borel Conjecture]\label{the:Borel} Let $M_0$ and $M_1$ be two 
  closed aspherical manifolds of dimension $n$ satisfying $\pi_1(M_{0}) \cong
  \pi_1(M_{1})$. Suppose one of the following conditions hold:

  \begin{itemize}

  \item We have $n \le 3$;
  \item We have $n = 4$ and $\pi_1(M_{0})$ is a Farrell-Jones group which is good in the
    sense of Freedman~\cite{Freedman(1983)};

  \item We have $n \ge 5$ and $\pi_1(M_{0})$ is a Farrell-Jones group.

  \end{itemize}

  Then any map $f \colon M_0 \to M_1$ inducing an isomorphism of fundamental groups 
  is homotopic to a homeomorphism.
\end{theorem}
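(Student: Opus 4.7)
The plan is to reduce the statement to showing that the topological structure set of $M_1$ is a single point in the appropriate category, and then to feed this into the Farrell-Jones input above $n=4$, with separate low-dimensional arguments for $n\le 3$ and the usual Freedman caveat at $n=4$.

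First I would observe that, since $M_0$ and $M_1$ are aspherical with isomorphic fundamental groups, the given $f$ lifts to a map of contractible universal covers and is therefore a homotopy equivalence by Whitehead's theorem. Under the Farrell-Jones assumption, $\Wh(\pi_1(M_1))=0$, $\widetilde{K}_0(\IZ\pi_1(M_1))=0$, and the negative $K$-groups vanish, so $f$ automatically represents a class in the simple topological structure set $\cals^{s,\operatorname{TOP}}(M_1)$. The task thus becomes to show that this class equals the identity.

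For $n\ge 5$ the plan is the standard comparison of the simple topological surgery exact sequence
\[
\cdots \to L_{n+1}^s(\IZ\pi_1(M_1)) \to \cals^{s,\operatorname{TOP}}(M_1) \to H_n(M_1;\bfL\langle 1\rangle) \to L_n^s(\IZ\pi_1(M_1))
\]
with the $L$-theoretic assembly map of the Full Farrell-Jones Conjecture. Using asphericity of $M_1$ to identify it with $B\pi_1(M_1)$, and combining the $L^{\langle -\infty\rangle}$-assembly isomorphism supplied by Farrell-Jones with the $K$-theoretic vanishings just mentioned, one upgrades the isomorphism to the simple decoration. Comparing $H_\ast(M_1;\bfL\langle 1\rangle)$ with the equivariant homology theory appearing in the assembly map (the only discrepancy being the bottom $L$-term of a point, which is absorbed into the structure set) forces $\cals^{s,\operatorname{TOP}}(M_1)$ to be a single point, so $f$ is homotopic to a homeomorphism. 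For $n=4$ the same argument applies word for word in the topological category, using Freedman's disc theorem for good fundamental groups to validate $4$-dimensional surgery; for $n\le 3$ the surgery machine is unavailable, and instead I would cite Perelman's Geometrization Theorem together with Waldhausen's rigidity for Haken manifolds (and the elementary classification of closed surfaces and the circle) to conclude that a $\pi_1$-isomorphism between closed aspherical manifolds of dimension $\le 3$ is homotopic to a homeomorphism.

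The hard part of writing this up carefully is the decoration bookkeeping: one must verify that the $L^{\langle -\infty\rangle}$-assembly isomorphism delivered by Farrell-Jones, together with the $K$-theoretic vanishings, actually yields an isomorphism at the simple level that is compatible with the $\bfL\langle 1\rangle$-homology appearing on the normal invariant side of the surgery exact sequence. The dimension $3$ case, by contrast, is not genuinely proved here but imported as a large black box from geometrization and $3$-manifold topology, and the dimension $4$ case sits on Freedman's topological surgery, which is why the statement must restrict the class of fundamental groups there.
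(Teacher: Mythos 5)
Your proposal takes essentially the same route as the paper: it cites geometrization together with rigidity results for $n\le 3$, and for $n\ge 4$ it runs the surgery exact sequence against the Farrell-Jones assembly isomorphism (with Freedman at $n=4$); the paper simply compresses the $n\ge 4$ case to a citation of~\cite[Proposition~0.3]{Bartels-Lueck(2012annals)}, which carries out the decoration bookkeeping you describe. One small inaccuracy: in dimension~$3$, Waldhausen's rigidity alone covers only the Haken case; the paper additionally cites Turaev~\cite{Turaev(1988)} (and Mostow rigidity in the hyperbolic case is implicit) to handle non-Haken geometric $3$-manifolds, which you should include if you want the $3$-dimensional black box to actually close.
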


\begin{proof} The Borel Conjecture is true obviously in dimension $n \le 1$.  The Borel
  Conjecture is true in dimension $2$ by the classification of closed manifolds of
  dimension $2$. It is true in dimension $3$ since Thurston's Geometrization Conjecture
  holds.  This follows from results of Waldhausen (see Hempel~\cite[Lemma~10.1 and
  Corollary~13.7]{Hempel(1976)}) and Turaev, see~\cite{Turaev(1988)}, as explained for
  instance in~\cite[Section~5]{Kreck-Lueck(2009nonasph)}.  A proof of Thurston's
  Geometrization Conjecture is given in~\cite{Kleiner-Lott(2008), Morgan-Tian(2014)}
  following ideas of Perelman. The Borel Conjecture follows from surgery theory in
  dimension $\ge 4$, see for instance~\cite[Proposition~0.3]{Bartels-Lueck(2012annals)}.
\end{proof}

One cannot replace homeomorphism by diffeomorphism in Theorem~\ref{the:Borel}.  The torus
$T^n$ for $ n \ge 5$ is a counterexample, see~\cite[15A]{Wall(1999)}.  Other
counterexamples involving negatively curved manifolds are constructed by
Farrell-Jones~\cite[Theorem~0.1]{Farrell-Jones(1989b)}.


\subsection{Acknowledgments}
\label{subsec:Acknowledgements}

The first author thanks the Jack \& Dorothy Byrne Foundation and “the University of Chicago” for support during numerous visits. 
The paper is financially supported by the ERC 
Advanced Grant ``KL2MG-interactions'' (no.
662400) of the second author granted by the European Research Council, 
and by the Cluster
of Excellence ``Hausdorff Center for Mathematics'' at Bonn.
The third author 
was partially supported by NSF grant 1510178.

We thank Michel Boileau for fruitful discussions and hints and the referee who 
wrote a very detailed and helpful report.

The paper is organized as follows:
\tableofcontents


\typeout{------------------------------- Section 1: Short review of Poincare duality
  groups --------------------------------}

\section{Short review of Poincar\'e duality groups}
\label{sec:Short_review_of_Poincare_duality_groups}

\begin{definition}[Poincar\'e duality group]\label{def:Poincare_duality_group}
  A \emph{Poincar\'e duality group $G$ of dimension $n$} is a group satisfying:

  \begin{itemize}

  \item $G$ is of type {FP}, i.e. $\IZ$ admits a finite resolution by finitely generated projective $\IZ G$-modules;

  \item $H^i(G;\IZ G) \cong
    \begin{cases}
      0 & i \not = n;
      \\
      \IZ & i = n.
    \end{cases}
    $
  \end{itemize}
\end{definition}


\subsection{Basic facts about Poincar\'e duality groups}
\label{subsec:Basic_facts_about_Poincare_duality_groups}

\begin{itemize}

\item A Poincar\'e duality group is finitely generated and torsion free;

\item For $n \ge 4$ there exist $n$-dimensional Poincar\'e duality groups which are not
  finitely presented, see~\cite[Theorem~C]{Davis(1989)};

\item If $G$ is a Poincar\'e duality group of dimension $n \ge 3$ then $BG$ is a finitely dominated
  $n$-dimensional Poincar\'e complex in the sense of Wall~\cite{Wall(1967)} if and only if
  $G$ is finitely presented, see~\cite[Theorem~1]{Johnson+Wall(1972)}. If $\widetilde{K}_0(\IZ G)$ vanishes, 
  then $BG$ is homotopy equivalent to
  finite $n$-dimensional $CW$-complex, see~\cite[Theorem~F]{Wall(1965a)};

\item If $G$ is the fundamental group of a closed aspherical manifold of dimension $n$,
  then $BG$ is homotopy equivalent to a finite $n$-dimensional $CW$-complex and in
  particular $G$ is finitely presented. In fact, every compact $\ENR$ of dimension $n>2$ is 
 homotopy equivalent to a finite $n$-dimensional polyhedron, see West~\cite{West(1977)};

\item To our knowledge there exists in the literature no example of a
  $3$-dimensional Poincar\'e duality group which is not homotopy equivalent to a finite
  $3$-dimensional $CW$-complex;

\item Every $2$-dimensional Poincar\'e duality group is the fundamental group of a closed
  surface.  This result is due to Bieri, Eckmann and Linnell, see for
  instance,~\cite{Eckmann(1987)}.

\end{itemize}


\subsection{Some prominent conjectures and results about Poincar\'e duality groups}
\label{subsec:Some_prominent_conjectures_and_results_about_Poincare_duality_groups}

\begin{conjecture}[Poincar\'e duality groups and closed aspherical manifolds]%
\label{con:Poincare_duality_groups_and_aspherical_closed_manifolds}
  Every finitely presented Poincar\'e duality group is the fundamental group of a closed  aspherical
  topological manifold.
\end{conjecture}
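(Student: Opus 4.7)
The plan is the classical surgery program: build a finite Poincar\'e model of $BG$, reduce its Spivak normal fibration to a topological bundle, compute and kill the surgery obstruction, and read off the resulting closed topological manifold. I would split the argument by the dimension $n$ since the three dimensional regimes need genuinely different input.

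First I would upgrade $BG$ to a finite $n$-dimensional Poincar\'e complex. By Johnson--Wall, as recorded in Section~\ref{subsec:Basic_facts_about_Poincare_duality_groups}, finite presentation plus $n\ge 3$ already yields a finitely dominated Poincar\'e complex structure; the finiteness obstruction lives in $\widetilde{K}_0(\IZ G)$, and assuming $G$ belongs to the class of Farrell--Jones groups described after Theorem~\ref{the:stable_Cannon_Conjecture}, this obstruction vanishes and $BG$ becomes a genuine finite $CW$-complex.

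For the low dimensions: $n\le 2$ is classical, via Bieri--Eckmann--Linnell for the surface case. The case $n=3$ reduces to Conjecture~\ref{con:Poincare_duality_groups_of_dimension_three}; one would try to handle this by combining Cannon-type asymptotic analysis with Thurston geometrization, accepting the $3$-dimensional conjecture as input where appropriate. For $n\ge 4$ I would carry out surgery. The first serious step is to lift the Spivak fibration $BG \to BSG$ along $BSTOP \to BSG$; the obstructions live in $[BG, B(G/TOP)]$. I would try to produce the lift equivariantly on $EG$ from geometric data on the group---for example, from an equivariant approximate tangent structure coming from a cocompact CAT$(0)$ or hyperbolic model---and push it down. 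Once such a lift is in hand one obtains a normal map $(f,\overline{f})\colon M \to BG$ with $M$ a closed topological manifold and a simple surgery obstruction $\sigma(f,\overline{f}) \in L^s_n(\IZ G)$. Because $BG$ is aspherical, the Full Farrell--Jones Conjecture identifies $L^s_n(\IZ G)$ with the $L$-theory homology of $BG$ via the assembly map; the fundamental class controls where $\sigma$ can sit, and the action of $[BG, G/TOP]$ by change of normal structure can be used to kill it. Surgery in dimensions $n\ge 5$ then produces a simple homotopy equivalence $M' \to BG$, and asphericity forces $M'$ to be aspherical with $\pi_1(M')\cong G$; the case $n=4$ is handled by Freedman's theorem subject to a goodness hypothesis on $G$.

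The decisive obstacle is the Spivak reduction. No existing general argument produces it for an arbitrary finitely presented Poincar\'e duality group, and the Bryant--Ferry--Mio--Weinberger theorem shows that the wider category of ANR homology manifolds admits aspherical realizations whose Quinn resolution obstruction is nonzero, hence are not honestly manifolds. A proof of the conjecture in its stated form would have to either construct the Spivak lift geometrically from the structure of $G$, or prove vanishing of the Quinn resolution obstruction for ANR homology manifolds modeled on $BG$; absent either tool the attempted argument halts exactly at this step.
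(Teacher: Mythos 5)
The statement you are trying to prove is stated in the paper as a \emph{conjecture} (Conjecture~\ref{con:Poincare_duality_groups_and_aspherical_closed_manifolds}); the paper offers no proof of it, and it is open. Your write-up is an accurate sketch of the standard surgery program and correctly identifies where it breaks down, but by your own admission it halts at the decisive step, so it is not a proof. Two further points are worth making precise. First, you quietly add the hypothesis that $G$ is a Farrell--Jones group; that is not part of the conjecture as stated, and without it even the finiteness of $BG$ and the identification of $L^s_n(\IZ G)$ with $L$-theory homology via the assembly map are unavailable. Second, even granting the Farrell--Jones hypothesis and $n\ge 6$, the best available general result is Theorem~\ref{the:FJ_and_Borel-existence} (Bartels--L\"uck--Weinberger), which produces only a closed aspherical $\ENR$ homology manifold with the DDP homotopy equivalent to $BG$; the remaining obstruction to its being a genuine topological manifold is exactly Quinn's resolution obstruction $\iota \in 1+8\IZ$ of Theorem~\ref{the:Quinn-obstruction}, and whether $\iota=1$ holds for all aspherical examples is open (Remark~\ref{rem_Quinn_for_aspherical} notes only that no counterexample is known). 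This is precisely why the paper records the weaker Conjecture~\ref{con:Poincare_duality_groups_and_aspherical_closed_homology_manifolds} separately. Your proposal is therefore best read not as a proof but as a correct diagnosis of the two open obstructions --- the Spivak reduction (equivalently, the vanishing of the total surgery obstruction in $L_0(\IZ)$, cf.\ Section~\ref{sec:The_total_surgery_obstruction}) and the Quinn obstruction --- and the whole point of the paper is that these obstructions \emph{can} be killed after crossing with a closed manifold $N$ of dimension $\ge 2$, which is the stable statement actually proved.
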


A weaker version is

\begin{conjecture}[Poincar\'e duality groups and closed aspherical $\ENR$ homology ma\-ni\-folds]%
\label{con:Poincare_duality_groups_and_aspherical_closed_homology_manifolds}
  Every finitely presented Poincar\'e duality group is the fundamental group of a 
  closed aspherical $\ENR$ homology manifold.
\end{conjecture}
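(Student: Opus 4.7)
The plan is to apply the surgery theory for $\ENR$ homology manifolds developed by Bryant, Ferry, Mio and Weinberger, and to use assembly-map technology (ideally the Farrell--Jones Conjecture) to kill the resulting surgery obstruction.

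First, using the Johnson--Wall theorem cited above, I would upgrade the algebraic hypothesis on $G$: since $G$ is a finitely presented Poincar\'e duality group of dimension $n$, the classifying space $BG$ admits the structure of a finitely dominated $n$-dimensional Poincar\'e complex in the sense of Wall, and therefore carries a Spivak normal fibration. The low-dimensional cases $n \le 3$ should be handed off to prior work: $n=1$ is trivial, $n=2$ is the Bieri--Eckmann--Linnell theorem, and $n=3$ is Wall's Conjecture~\ref{con:Poincare_duality_groups_of_dimension_three}. I would focus on $n \ge 6$, where the BFMW machine applies in clean form; $n=4,5$ would need separate treatment, perhaps via crossing with a high-dimensional aspherical factor in the spirit of the stable Cannon strategy of the present paper, followed by a descent argument.

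Second, I would invoke the BFMW existence theorem: a finitely dominated Poincar\'e complex $X$ of dimension $n \ge 6$ is homotopy equivalent to a closed $\ENR$ homology manifold if and only if a total surgery obstruction in the homology manifold structure set $\mathcal{S}^H_n(X)$ vanishes. The decisive point distinguishing this from the topological-manifold case is that BFMW use the full $4$-periodic $L$-spectrum $\bfL$ rather than its connective cover $\bfL\langle 1\rangle$, so Quinn's local resolution invariant $i(X) \in 1 + 8\IZ$ does not obstruct existence; what remains is built from the cofibre of the $\bfL$-assembly map $H_n(BG;\bfL) \to L_n(\IZ G)$.

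Third, to force vanishing of the obstruction, I would try to prove that this assembly map is surjective onto the class carrying the symmetric signature of $BG$. Together with the Spivak data, this would produce a closed $\ENR$ homology manifold homotopy equivalent to $BG$, which is automatically aspherical. The main obstacle is precisely this surjectivity hypothesis: it is a consequence of the Full Farrell--Jones Conjecture for $G$, which is known in many cases (hyperbolic groups, $\operatorname{CAT}(0)$-groups, $3$-manifold groups, \ldots) but is a major open problem for arbitrary finitely presented Poincar\'e duality groups, since such $G$ need not belong to any of the currently accessible classes. A secondary difficulty is the absence of a finite $CW$-model for $BG$ when $\widetilde{K}_0(\IZ G) \neq 0$, which forces one to work with decorated $L$-groups and to track finiteness obstructions throughout the argument, rather than producing a strictly finite Poincar\'e complex to which a clean version of BFMW directly applies.
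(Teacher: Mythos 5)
This statement is a \emph{conjecture}, not a theorem: the paper does not claim a proof of Conjecture~\ref{con:Poincare_duality_groups_and_aspherical_closed_homology_manifolds}, and in fact explicitly presents it as a weakened form of Conjecture~\ref{con:Poincare_duality_groups_and_aspherical_closed_manifolds}, both of which remain open for general finitely presented Poincar\'e duality groups. What the paper \emph{does} prove is the conditional version, Theorem~\ref{the:FJ_and_Borel-existence}, which establishes the conclusion for $n \ge 6$ under the additional hypothesis that $G$ is a Farrell--Jones group. Your sketch is essentially a correct reconstruction of that theorem and its proof strategy (BFMW surgery for $\ENR$ homology manifolds, the $4$-periodic $\bfL$-spectrum assembly map, the irrelevance of the Quinn resolution invariant), and you correctly identify that the Full Farrell--Jones Conjecture for $G$ is the decisive missing ingredient. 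So far so good: you have rediscovered the conditional result, not proved the conjecture.

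Two of your reductions, however, are not reductions at all but substitutions of one open problem for another, and you should be explicit about that. Handing off $n = 3$ to Wall's Conjecture~\ref{con:Poincare_duality_groups_of_dimension_three} does nothing, because Wall's Conjecture is itself open (indeed it is the motivating open problem of this paper). Likewise, proposing to settle $n = 4, 5$ by ``crossing with a high-dimensional aspherical factor... followed by a descent argument'' assumes exactly the kind of destabilization that is unavailable: the entire content of this paper is that one can prove the \emph{stable} version (after crossing with $N$) but cannot descend, and that descent in the $3$-dimensional case is equivalent to the Cannon Conjecture itself (see Lemma~\ref{lem:comparing_general_N} and Lemma~\ref{lem:comparing_T_upper_k}). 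The authors do remark in passing that their methods extend Theorem~\ref{the:High-dimensional_Cannon_Conjecture} to $n = 5$, but that is again conditional on hyperbolicity and Farrell--Jones; there is no unconditional descent. In short, your outline correctly locates the obstacles, but each ``obstacle'' you note is in fact an equivalent or harder open problem, so the proposal does not constitute a proof and the conjecture remains open.
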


Michel Boileau has informed us about the following two facts: 

\begin{theorem}\label{the:subgroup_suffices}
  A Poincar\'e duality group $G$ of dimension $3$ is the fundamental group of a 
  closed aspherical $3$-manifold if and only if $G$ contains a subgroup $H$, which is the
  fundamental group of a closed aspherical $3$-manifold.
\end{theorem}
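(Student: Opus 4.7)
The reverse implication is trivial: take $H=G$. For the forward implication, assume $H\le G$ is a subgroup which is the fundamental group of a closed aspherical $3$-manifold $N$.

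The subgroup $H$ must actually be of finite index in $G$. Since $N$ is a closed aspherical $3$-manifold, $H$ itself is a Poincar\'e duality group of dimension $3$, and in particular has cohomological dimension $3$. By Strebel's theorem, any infinite-index subgroup of a Poincar\'e duality group of dimension $n$ has cohomological dimension strictly less than $n$, so $[G:H]<\infty$. Write $Q:=G/H$ for the resulting finite quotient; we obtain a group extension $1\to H\to G\to Q\to 1$ with associated outer action $\phi\colon Q\to \operatorname{Out}(H)=\operatorname{Out}(\pi_1(N))$.

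The crux is to realize this extension by a properly discontinuous action on the universal cover $\widetilde N$ extending the deck transformation action of $H$. By Thurston's Geometrization Conjecture (proved by Perelman), $N$ admits a canonical geometric decomposition, and the corresponding Nielsen realization results for aspherical $3$-manifolds---Mostow rigidity in the hyperbolic case, together with the Meeks--Scott and Zimmermann realization theorems covering Seifert-fibered pieces and JSJ graph-of-groups decompositions---allow one to lift the finite subgroup $\phi(Q)\le\operatorname{Out}(\pi_1(N))$ to a smooth effective action $\Phi\colon Q\to\operatorname{Diff}(N)$. Choosing lifts of $\Phi(q)$ to $\widetilde N$, and adjusting them (using the freedom parametrized by cocycles in $H^2(Q;Z(H))$) so that together with $H$ they generate a group isomorphic to $G$, produces the desired properly discontinuous $G$-action on $\widetilde N$.

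Since $G$ is torsion-free (being a Poincar\'e duality group) and acts properly discontinuously on the contractible manifold $\widetilde N$, every point stabilizer is finite and torsion-free, hence trivial, and the $G$-action is free. Therefore $M:=\widetilde N/G = N/Q$ is a closed aspherical $3$-manifold with $\pi_1(M)\cong G$. The principal obstacle is the geometric realization step: lifting $\phi$ to an honest action whose extension class is exactly $G$ amounts to a three-dimensional Nielsen realization problem, which depends on the full power of geometrization together with equivariant rigidity results. The extension-class matching is automatic when $H$ has trivial center (as in the hyperbolic case), and requires the $H^2(Q;Z(H))$-adjustment for Seifert and graph-manifold pieces where $Z(H) \neq 0$.
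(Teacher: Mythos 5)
Your first step---Strebel's theorem to force $[G:H]<\infty$---is identical to the paper's. From there the two arguments diverge. The paper simply invokes a theorem (it cites Groves--Manning--Wilton, Theorem 5.1, a consequence of geometrization) that promotes a torsion-free group containing a finite-index subgroup which is an irreducible closed $3$-manifold group to being itself an irreducible closed $3$-manifold group, and then uses the Sphere Theorem to pass from ``irreducible with infinite $\pi_1$'' to ``aspherical.'' You instead attempt a direct proof of that promotion via equivariant Nielsen realization: realize the outer $Q$-action by diffeomorphisms of $N$, lift to $\widetilde N$, adjust cocycles to hit the extension class of $G$, and quotient. This is the morally correct picture---indeed it is essentially the content of the theorem the paper cites---but it is a substantial theorem in its own right, and your sketch leaves precisely the hard part open. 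Specifically: (i) Nielsen realization across the full geometric and JSJ decomposition is Zimmermann's equivariant geometrization, not merely Mostow rigidity plus Meeks--Scott on individual pieces; (ii) showing that every torsion-free extension class in $H^2(Q;Z(H))$ is geometrically realizable (so that the ``adjustment'' can actually land on the class of $G$) is nontrivial when $Z(H)\neq 0$, i.e., in the Seifert-fibered case; and (iii) when $Z(H)\neq 0$ the map $\phi\colon Q\to\operatorname{Out}(H)$ need not be injective, so passing to an effective $Q$-action on $N$ needs an extra argument. You flag the realization step as the ``principal obstacle,'' which is honest, but that obstacle is exactly what the paper's citation absorbs; as written, your argument is a plausible sketch of the cited theorem rather than a self-contained proof.
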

\begin{proof}
Let $H$ be a subgroup of $G$ which is the
  fundamental group of an irreducible  closed $3$-manifold. Suppose that the index of $H$ in $G$ is
  infinite.  Then the cohomological dimension of $H$ is smaller than
  the cohomological dimension of $G$ by~\cite{Strebel(1977)}. 
  Since the cohomological dimension of both $H$ and $G$ is three, we
  get a contradiction.  Hence the index of $H$ in $G$ is finite. 
  The solution of Thurston's
Geometrization Conjecture by Perelman, see~\cite{Morgan-Tian(2014)}, implies that
$G$ is the fundamental group of an irreducible  closed $3$-manifold,  see for
instance~\cite[Theorem~5.1]{Groves-Manning-Wilton(2012)}. Since a closed  $3$-manifold is aspherical 
if and only if it is irreducible and has infinite fundamental group, Lemma~\ref{the:subgroup_suffices} follows.
\end{proof}

Moreover, Theorem~\ref{the:subgroup_suffices} and the works of
Cannon-Cooper~\cite{Cannon-Cooper(1992)},
Eskin-Fisher-Whyte~\cite{Eskin-Fisher-Whyte(2007)},
Kapovich-Leeb~\cite{Kapovich-Leeb(1997)}, and~Rieffel~\cite{Rieffel(2001)} imply

\begin{theorem}\label{the:quaisisometry_suffices}
  A Poincar\'e duality group $G$ of dimension $3$ is the fundamental group of a
  closed   aspherical $3$-manifold if and only if it is quasiisometric to the fundamental
  group of a closed aspherical $3$-manifold.
\end{theorem}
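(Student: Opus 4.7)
The ``only if'' direction is immediate, since any finitely generated group is quasi-isometric to itself. For the ``if'' direction, suppose $G$ is a $3$-dimensional Poincar\'e duality group that is quasi-isometric to $\Gamma = \pi_1(M)$ for some closed aspherical $3$-manifold $M$. By Theorem~\ref{the:subgroup_suffices}, it suffices to exhibit \emph{any} subgroup $H \le G$ which is the fundamental group of a closed aspherical $3$-manifold; since $G$ is a $PD^3$ group, such an $H$ will automatically have finite index, and Perelman's geometrization then upgrades $G$ itself to the fundamental group of a closed aspherical $3$-manifold.

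The strategy is to decompose into cases according to the geometric decomposition of $M$ and appeal in each case to an appropriate quasi-isometric rigidity theorem. By the Thurston--Perelman Geometrization Theorem, $M$ is either modeled on one of the six aspherical geometries $\IH^3$, $\IH^2 \times \IR$, $\widetilde{SL_2(\IR)}$, $\mathrm{Nil}$, $\mathrm{Sol}$, $\IE^3$, or it is a graph/mixed manifold with nontrivial JSJ decomposition. In the hyperbolic case one invokes Cannon--Cooper~\cite{Cannon-Cooper(1997)} (via Sullivan--Tukia) to conclude that $G$ is virtually a uniform lattice in $\mathrm{Isom}(\IH^3)$. In the $\mathrm{Sol}$ case, Eskin--Fisher--Whyte~\cite{Eskin-Fisher-Whyte(2007)} yields that $G$ is virtually a uniform lattice in $\mathrm{Sol}$. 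For the Seifert-fibered geometries and the flat case, Rieffel~\cite{Rieffel(2001)} (together with classical results for virtually nilpotent groups) gives quasi-isometric rigidity producing a virtual lattice embedding into the model. For the remaining non-geometric (graph/Haken) case, Kapovich--Leeb~\cite{Kapovich-Leeb(1997)} shows that any finitely generated group quasi-isometric to $\Gamma$ is virtually the fundamental group of a closed Haken $3$-manifold.

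In every case one produces a finite-index subgroup $H \le G$ that is a uniform (cocompact) lattice in the relevant model, hence the fundamental group of a closed aspherical $3$-manifold. Theorem~\ref{the:subgroup_suffices} then finishes the argument.

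\textbf{Main obstacle.} The delicate point is not the existence of a rigidity statement in each case but the uniformity/cocompactness of the resulting lattice. The cited theorems typically conclude only that $G$ is virtually a lattice in the model; one must rule out that this lattice is non-uniform. Here the hypothesis that $G$ is a $3$-dimensional Poincar\'e duality group is essential: $G$ has cohomological dimension exactly $3$ and satisfies Poincar\'e duality with $\IZ$-coefficients in top dimension, which forces any finite-index subgroup obtained by the rigidity theorem to act cocompactly on its model space (non-uniform lattices in these model geometries are either of smaller cohomological dimension or fail top-dimensional Poincar\'e duality with closed-manifold coefficients). Carefully dispatching this uniformity in each of the geometric cases—and in the mixed case where one must match the JSJ structure of $G$ with that of $\Gamma$—is the genuine technical content behind the one-line citation.
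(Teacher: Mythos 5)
Your proof matches the paper's intended argument: the paper gives no proof of its own beyond citing Theorem~\ref{the:subgroup_suffices} together with the quasi-isometric rigidity results of Cannon--Cooper, Eskin--Fisher--Whyte, Kapovich--Leeb, and Rieffel, and your case analysis by Thurston geometry (plus the non-geometric JSJ case) combined with the subgroup criterion is exactly the expected way to assemble those citations. One minor remark: the ``uniformity'' concern you flag as the main obstacle is largely automatic, since $M$ closed forces $\pi_1(M)$ --- and hence $G$ --- to be quasi-isometric to the model geometry itself (not merely to a lattice therein), and the cited rigidity theorems applied to a group quasi-isometric to the model already return a cocompact lattice, so the Poincar\'e duality hypothesis is not needed to rule out the non-uniform case.
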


The next result is due to Bowditch~\cite[Corollary~0.5]{Bowditch(2004)}.

\begin{theorem}\label{the:center}
  If a Poincar\'e duality group of dimension $3$ contains an infinite normal cyclic
  subgroup, then it is the fundamental group of a closed Seifert $3$-manifold.
\end{theorem}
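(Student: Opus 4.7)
The plan is to exploit the defining short exact sequence
$1 \to C \to G \to Q \to 1$, where $Q = G/C$, to reduce the problem to a question about surfaces.
First I would show that $Q$ is a $2$-dimensional Poincar\'e duality group, next invoke
the Bieri--Eckmann--Linnell theorem (cited in
Section~\ref{subsec:Basic_facts_about_Poincare_duality_groups}) to identify $Q$ with the fundamental
group of a closed surface $\Sigma$, and finally realize $G$ as the fundamental group of a (possibly twisted)
circle bundle over $\Sigma$, which is a closed Seifert fibered $3$-manifold.

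For the first step I would run the Lyndon--Hochschild--Serre spectral sequence
\[
E_2^{p,q} \;=\; H^p\bigl(Q;\, H^q(C;\,\IZ G)\bigr) \;\Longrightarrow\; H^{p+q}(G;\,\IZ G).
\]
Since $C \cong \IZ$ has cohomological dimension one, only rows $q=0,1$ contribute. Because $\IZ G$ is free as a $\IZ C$-module
(choose coset representatives for $C \backslash G$), the groups $H^q(C;\IZ G)$ can be computed directly:
$H^0(C;\IZ G)$ vanishes since $C$ is infinite, and $H^1(C;\IZ G)$ is isomorphic to $\IZ Q$ twisted by the orientation
character $\omega \colon Q \to \operatorname{Aut}(C) = \{\pm 1\}$ coming from conjugation. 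Thus the spectral sequence degenerates to isomorphisms $H^p(Q;\, \IZ Q^{\omega}) \cong H^{p+1}(G; \IZ G)$, and comparing with the PD$_3$ condition on $G$ forces $H^p(Q;\, \IZ Q^{\omega})$ to vanish for $p \neq 2$ and to equal $\IZ$ for $p=2$. Hence $Q$ is a $2$-dimensional Poincar\'e duality group.

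By Bieri--Eckmann--Linnell, $Q \cong \pi_1(\Sigma)$ for some closed surface $\Sigma$, necessarily with infinite fundamental group (so neither $S^2$ nor the projective plane). Given the conjugation action $\omega$, the extension
$1 \to \IZ \to G \to \pi_1(\Sigma) \to 1$ is classified by a class in $H^2(\Sigma;\, \IZ^{\omega})$.
Every such extension is realized geometrically by a circle bundle over $\Sigma$ (orientable if $\omega$ is trivial, otherwise with non-orientable fiber structure); the total space is a closed Seifert fibered $3$-manifold $M$, and since $M$ fibers over an aspherical base with $S^1$ fiber it is itself aspherical with $\pi_1(M) \cong G$.

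The hard part is the spectral sequence step. One has to carefully track the orientation character $w_G$ of the PD$_3$ group $G$ and see how it interacts with the conjugation character $\omega$; the correct statement is that $Q$ is a PD$_2$ group with orientation character $w_G \cdot \omega$, and only then does the realization as a surface bundle have the right orientation behaviour. A subsidiary subtlety is that $G$ is not a priori known to be finitely presented, so the argument must remain purely group-theoretic until the surface $\Sigma$ has been produced; at that point finite presentation of $G$ follows automatically from finite presentation of $\pi_1(\Sigma)$ and $\IZ$ together with the classified extension, and the reconstruction of the $3$-manifold proceeds topologically.
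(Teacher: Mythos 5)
The paper does not actually prove Theorem~\ref{the:center}; it is attributed to Bowditch~\cite[Corollary~0.5]{Bowditch(2004)} without proof. Your sketch is the natural homological attack on it, but it has a decisive gap: $Q = G/C$ need not be torsion-free, and the Lyndon--Hochschild--Serre computation is blind to torsion. Take $G = \IZ^3$ and let $C$ be the infinite cyclic subgroup generated by $(2,0,0)$. Then $Q \cong \IZ/2 \times \IZ^2$, and a short computation (run Lyndon--Hochschild--Serre for $1 \to \IZ^2 \to Q \to \IZ/2 \to 1$, using Shapiro's lemma for the finite-index step) shows that $H^p(Q;\IZ Q)$ is $\IZ$ in degree $2$ and zero in all other degrees --- exactly what your degenerate spectral sequence predicts --- yet $Q$ has infinite cohomological dimension, is not of type FP, and is not the fundamental group of any closed surface. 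The conclusion of the theorem is of course still true here ($\IZ^3 = \pi_1(T^3)$), but your argument never reaches it. In general $Q$ is only a two-dimensional orbifold group, and the realization step at the end must produce a Seifert fibration over an orbifold base rather than a circle bundle over a closed surface.

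There is a second, independent gap even when $Q$ happens to be torsion-free: Definition~\ref{def:Poincare_duality_group} requires $Q$ to be of type FP, and the spectral sequence tells you nothing about the finiteness properties of $Q$ beyond finite generation, so the sentence ``Hence $Q$ is a $2$-dimensional Poincar\'e duality group'' does not follow. This is precisely where the difficulty historically lay: the statement was proved long ago under the extra hypothesis that $G/C$ is FP$_2$ (equivalently, almost finitely presented). Bowditch's contribution is a geometric argument showing directly that $Q$ acts as a cocompact planar group, which secures both the finiteness and the orbifold structure simultaneously and replaces the Bieri--Eckmann--Linnell step wholesale; no purely spectral-sequence argument is known to do this.
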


The following result follows from the algebraic torus theorem of
Dunwoody-Swenson~\cite{Dunwoody-Swenson(2000)}.

\begin{theorem}\label{the:atoroidal} Let $G$ be a $3$-dimensional Poincar\'e duality
  group. Then precisely one of the following statements are true:

  \begin{enumerate}

  \item It is the fundamental group of a closed Seifert $3$-manifold;

  \item It splits as an amalgam or HNN extension over a subgroup $\IZ \oplus \IZ$;

  \item It is atoroidal, i.e., it contains no subgroup isomorphic to $\IZ \oplus \IZ$.

  \end{enumerate}
\end{theorem}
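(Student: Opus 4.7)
The plan is to reduce to the algebraic torus theorem of Dunwoody--Swenson. First I would observe that if $G$ is atoroidal we are already in case~(3), so assume $G$ contains a subgroup isomorphic to $\IZ \oplus \IZ$. Applying the Dunwoody--Swenson theorem to the finitely generated group $G$ (using that Poincar\'e duality groups are finitely generated, as recorded in Section~\ref{subsec:Basic_facts_about_Poincare_duality_groups}), I would expect to land in one of two mutually exclusive alternatives: either $G$ admits a nontrivial splitting as an amalgamated product or HNN extension over a subgroup commensurable with $\IZ \oplus \IZ$, or $G$ has a finite-index subgroup containing a normal $\IZ^2$ with Fuchsian quotient (the ``enclosing'' alternative).

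In the splitting alternative, since $G$ is torsion free, I would upgrade the splitting so that the edge group is actually isomorphic to $\IZ\oplus\IZ$. For this one must also rule out that Dunwoody--Swenson gives only a splitting over a two-ended subgroup; the correct dichotomy in their theorem places $\IZ^2$-splittings in a separate class from $\IZ$-splittings, and in our setting a proper two-ended splitting of a three-dimensional Poincar\'e duality group cannot be the only structure produced when $G$ contains $\IZ^2$. This puts $G$ into case~(2).

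In the Fuchsian alternative, I would extract from the normal $\IZ^2$ an infinite cyclic subgroup that is normal in a finite-index subgroup of $G$. Passing to the intersection of its conjugates (which is still of finite index), I obtain an infinite normal cyclic subgroup of a finite-index subgroup $G_0 \le G$. Then Bowditch's Theorem~\ref{the:center}, applied to $G_0$ (which is itself a $3$-dimensional Poincar\'e duality group), identifies $G_0$ as the fundamental group of a closed Seifert $3$-manifold; combining this with Theorem~\ref{the:subgroup_suffices}, together with the fact that being Seifert-fibered passes through finite covers in the aspherical closed case, identifies $G$ itself as the fundamental group of a closed Seifert $3$-manifold, which is case~(1).

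Finally, to prove the ``precisely one'' part: cases~(1) and~(2) both force the presence of $\IZ\oplus\IZ$ in $G$ and hence exclude~(3), and to separate~(1) from~(2) I would invoke the standard topological fact that a closed Seifert fibered $3$-manifold has a trivial JSJ decomposition, so its fundamental group does not admit a nontrivial splitting over $\IZ\oplus\IZ$. The main obstacle I anticipate is the bookkeeping in the Dunwoody--Swenson output: cleanly separating genuine $\IZ^2$-splittings from two-ended splittings, and promoting a virtual normal infinite cyclic subgroup to an honest central situation to which Bowditch's theorem can be applied.
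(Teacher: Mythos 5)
The paper does not actually prove this theorem --- it only cites Dunwoody--Swenson~\cite{Dunwoody-Swenson(2000)} --- so there is no in-paper argument to compare against. Your plan (apply the algebraic torus theorem, route the Fuchsian alternative through Bowditch's Theorem~\ref{the:center} and Theorem~\ref{the:subgroup_suffices}, and place the remaining case in~(2)) has the right overall shape, but it contains one outright error and several unclosed gaps.

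The error is in your disjointness of~(1) and~(2): you assert that a closed Seifert fibered $3$-manifold has trivial JSJ decomposition and conclude that its fundamental group does not split over $\IZ\oplus\IZ$. That inference is false. Trivial JSJ only says the manifold is a single geometric piece, not that it contains no essential tori: $T^3 = T^2\times S^1$ is Seifert fibered and $\pi_1(T^3)=\IZ^3$ is the HNN extension $\IZ^2*_{\IZ^2}$, and more generally a vertical torus over a non-separating curve in the base of any Seifert fibering of positive base genus gives an honest $\IZ^2$-splitting. So cases~(1) and~(2) genuinely overlap, and this argument cannot establish exclusivity. Independently, your reading of the algebraic torus theorem is off in two ways. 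In the Fuchsian (Seifert-type) alternative for a group containing $\IZ^2$, the finite-index subgroup carries a normal \emph{virtually infinite cyclic} subgroup (Hirsch length one) with Fuchsian quotient, not a normal $\IZ^2$; torsion-freeness then hands you an honest normal $\IZ$ directly, so your attempt to extract an invariant cyclic subgroup from a normal $\IZ^2$ is both unnecessary and, as written, unjustified --- the conjugation action on a normal $\IZ^2$ need not preserve any line. And in the splitting alternative, Dunwoody--Swenson only produce an edge group \emph{commensurable} with $\IZ^2$; in a torsion-free group this is a two-dimensional Bieberbach group, hence $\IZ^2$ or the Klein bottle group, and converting a Klein-bottle edge group to a $\IZ^2$ one (as well as ruling out the lower Hirsch-length edge group, which you flag but do not address) requires a dedicated $PD_3$ argument --- roughly that a $PD_3$ group can only split over a $PD_2$ subgroup, with $\IZ^2$ forced in the appropriate orientation situation --- which your sketch does not supply.
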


\begin{conjecture}[Weak hyperbolization Conjecture]
  An atoroidal $3$-dimensional Poincar\'e duality group is hyperbolic.
\end{conjecture}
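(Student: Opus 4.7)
Since the Weak Hyperbolization Conjecture is a long-standing open problem, what follows is a strategic outline rather than a genuine proof. The natural plan factors through Wall's Conjecture~\ref{con:Poincare_duality_groups_of_dimension_three}: first realize $G$ as the fundamental group of a closed aspherical $3$-manifold $M$, and then appeal to Thurston's Geometrization Conjecture (Perelman). For the second step, the atoroidal hypothesis on $G$ guarantees that $M$ is atoroidal, and Theorem~\ref{the:center} rules out any infinite normal cyclic subgroup, so $M$ is not Seifert-fibered; geometrization then forces $M$ to be hyperbolic, and $G=\pi_1(M)$ is therefore a hyperbolic group.

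The hard step is the manifold realization. My approach would be to construct a Bestvina $\mathcal{Z}$-structure on $G$. This first requires upgrading $G$ to be finitely presented, which is itself open for general $3$-dimensional Poincar\'e duality groups but is plausible under atoroidality (no $\IZ\oplus\IZ$ subgroup rules out the known Davis-style non-finitely-presented examples in spirit). Granting finite presentation, one builds a finite $K(G,1)$ using the bulleted facts in Section~\ref{subsec:Basic_facts_about_Poincare_duality_groups} and then compactifies its universal cover by a boundary $\partial G$. For a $3$-dimensional Poincar\'e duality group, Bestvina--Mess duality should force $\partial G$ to be a cohomology $2$-sphere. One would then promote this to an honest $S^2$ and invoke the Cannon Conjecture~\ref{con:Cannon_conjecture_in_the_torsionfree_case}, which would yield a hyperbolic $3$-manifold structure on a $K(G,1)$.

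The main obstacle is producing any geometric realization of $G$ from the purely algebraic atoroidality hypothesis. Neither the quasi-isometric rigidity package behind Theorem~\ref{the:quaisisometry_suffices} nor the stable realization Theorem~\ref{the:stable_Cannon_Conjecture} supplies a route to ``destabilize'' down to a genuine closed $3$-manifold model for $BG$ without extra geometric input such as a $\mathrm{CAT}(0)$ structure or a visual boundary. A direct surgery-theoretic attack, producing a closed aspherical $3$-manifold homology equivalent to $BG$ and then promoting the $\IZ G$-homology equivalence to a homotopy equivalence, collides head-on with the low-dimensional obstruction already flagged after Theorem~\ref{the:Vanishing_of_the_surgery_obstruction}: surgery below dimension $5$ stops at the homology level, which is precisely why stabilization is needed in the present paper.
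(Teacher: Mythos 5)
The statement you were asked to prove is labeled as a \emph{Conjecture} in the paper (the Weak Hyperbolization Conjecture), and the paper offers no proof of it whatsoever; it is recorded as an open problem, alongside a related partial result of Kapovich--Kleiner that establishes it under the additional hypothesis that $G$ is a $\mathrm{CAT}(0)$-group. You correctly recognize that the statement is open, so there is no question of your attempt missing the paper's argument.

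That said, your strategic outline contains a circularity worth flagging. Your route is: realize $G$ as $\pi_1$ of a closed aspherical $3$-manifold, then apply geometrization. For the realization step you propose building a $\mathcal{Z}$-structure on $G$, observing that the boundary should be a cohomology $S^2$, ``promoting'' it to an actual $S^2$, and then invoking the Cannon Conjecture~\ref{con:Cannon_conjecture_in_the_torsionfree_case}. But the Cannon Conjecture takes hyperbolicity of $G$ as a \emph{hypothesis}: it concerns torsion-free hyperbolic groups with Gromov boundary $S^2$. Hyperbolicity of $G$ is precisely the conclusion of the Weak Hyperbolization Conjecture, so you cannot appeal to Cannon before you have it. Having a $\mathcal{Z}$-boundary homeomorphic to $S^2$ does not by itself make a group hyperbolic (one needs, e.g., North--South dynamics or some metric input), so this step does not close the loop. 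A non-circular version of the plan would have to extract hyperbolicity directly from atoroidality plus Poincar\'e duality before any boundary or manifold technology can be brought to bear, and that is exactly where the conjecture is stuck; the Kapovich--Kleiner result cited in the paper shows how far one can currently get by injecting a $\mathrm{CAT}(0)$ hypothesis to supply the missing geometry.
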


The next result is due to Kapovich-Kleiner~\cite[Theorem~2]{Kapovich-Kleiner(2007)}.

\begin{theorem}
  A $3$-dimensional Poincar\'e duality group which is a CAT(0)-group and atoroidal is
  hyperbolic.
\end{theorem}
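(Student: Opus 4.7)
The plan is to argue by contradiction: assume $G$ is CAT(0), PD$_3$, and atoroidal, but not hyperbolic, and produce a subgroup isomorphic to $\IZ \oplus \IZ$, contradicting atoroidality. Fix a proper CAT(0) space $X$ on which $G$ acts properly and cocompactly by isometries.

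First I would invoke the Flat Plane Theorem of Bridson-Haefliger: such a $G$ is Gromov-hyperbolic if and only if $X$ contains no isometrically embedded copy of the Euclidean plane $\IE^2$. So under the negation of the conclusion there exists a flat plane $F \subset X$. By the Flat Torus Theorem, if some $G$-translate of $F$ is stabilized cocompactly by a subgroup of $G$, that subgroup contains $\IZ \oplus \IZ$ and we are done. The problem therefore reduces to showing that the (nonempty) family of flat planes in $X$ must contain a $G$-periodic one.

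To exploit the PD$_3$ hypothesis I would pass to boundary theory. For a CAT(0) group $G$ the compactification $X \cup \partial_\infty X$ is a $Z$-compactification of $EG$, so by Bestvina-Mess the reduced \v{C}ech cohomology of $\partial_\infty X$ and the local cohomology at each point compute $H^{*+1}(G;\IZ G)$. In the PD$_3$ case this says $\partial_\infty X$ is a cohomological $2$-sphere with the local cohomology of $S^2$ at every point, and $G$ acts on it as a uniform convergence group with respect to the cone topology. The flat plane $F$ contributes a topologically embedded circle $C = \partial_\infty F \subset \partial_\infty X$, and the task becomes one of understanding how the $G$-orbit of $C$ sits inside this cohomological $S^2$.

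The main obstacle, and the technical heart of Kapovich-Kleiner, is the topological analysis of the family $\{gC : g \in G\}$. Using a Jordan-curve style argument valid for cohomological $S^2$'s (i.e.\ Alexander duality on $\partial_\infty X$), together with the fact that any flat plane in $X$ is determined by its circle at infinity, one shows that two distinct circles $gC$ and $C$ must be disjoint and that the collection is locally finite near $C$. Combined with the convergence-group dynamics on $\partial_\infty X$, this forces the setwise stabilizer $\text{Stab}_G(C)$ to act cocompactly on the flat plane $F$ bounded by $C$; the Flat Torus Theorem then yields the desired $\IZ \oplus \IZ \le G$, contradicting atoroidality and completing the argument.
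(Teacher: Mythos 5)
The paper offers no proof of this theorem; it records it as \cite[Theorem~2]{Kapovich-Kleiner(2007)} without argument, so there is no in-paper proof against which to compare. Your sketch does follow the broad outline of the cited argument: the Flat Plane Theorem to obtain a flat $F$ in $X$ under the negation of hyperbolicity, Bestvina's $Z$-compactification results to identify $\partial_\infty X$ as a \v{C}ech cohomology $2$-sphere, and then the aim of producing a $G$-periodic flat whose cocompact stabilizer contains $\IZ\oplus\IZ$, contradicting atoroidality.

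However, the crucial step contains a genuine error. You assert that $G$ acts on $\partial_\infty X$ as a uniform convergence group and then invoke ``convergence-group dynamics'' to force $\mathrm{Stab}_G(C)$ to act cocompactly on $F$. For a non-hyperbolic CAT(0) group this is false and cannot be repaired: by Bowditch's characterization, a uniform convergence action on a perfect metrizable compactum is \emph{equivalent} to hyperbolicity of the acting group, so assuming such an action inside a proof-by-contradiction that $G$ is not hyperbolic is circular. The Kapovich--Kleiner argument has to do without convergence dynamics entirely; the separation analysis of the family $\{gC\}$ on the cohomological $2$-sphere, and the passage from it to a periodic flat, is the real technical content of that paper and is not a routine Alexander-duality argument. (A smaller slip: producing $\IZ\oplus\IZ$ from a group acting properly cocompactly on a flat plane uses the Bieberbach theorem together with torsion-freeness of Poincar\'e duality groups; the Flat Torus Theorem runs in the opposite direction, from a given free abelian subgroup to a flat.)
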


We conclude from~\cite[Theorem~2.8 and Remark~2.9]{Bestvina(1996)}.

\begin{theorem}\label{the_hyperbolic_boundary_sphere} Let $G$ be a hyperbolic
  $3$-dimensional Poincar\'e duality group. Then its boundary is homeomorphic to $S^2$.
\end{theorem}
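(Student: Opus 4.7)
The plan is to apply the Bestvina--Mess formula identifying the \v{C}ech cohomology of the Gromov boundary with the group cohomology with $\IZ G$-coefficients, and then use the local structure of the boundary to upgrade a cohomological $S^2$ to a topological $S^2$ in dimension~$2$.

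First I would invoke the Bestvina--Mess theorem: for any torsion-free hyperbolic group $G$ there is a $\IZ$-set compactification $\overline{X} = X \cup \partial G$ of a finite model for $EG$, and one has a natural isomorphism
\[
\check{H}^i(\partial G;\IZ) \;\cong\; H^{i+1}(G;\IZ G)
\]
for all $i \ge 0$, together with the equality of the topological dimension of $\partial G$ and the cohomological dimension of $G$ minus one. Applied to a $3$-dimensional Poincar\'e duality group, this produces a $2$-dimensional compact metric space with the integral \v{C}ech cohomology of $S^2$.

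Second I would upgrade this to a local statement. The $\IZ$-set compactification $\overline{X}$ is an ANR, and standard Bestvina--Mess-style excision translates local \v{C}ech cohomology of $\partial G$ at a point $\xi$ into the cohomology of ``cone-like'' subsets of $X$ accumulating at $\xi$. Using the Poincar\'e duality structure on $G$ together with cocompactness of the $G$-action on $X$ and $\overline{X}$, one propagates the global duality to every point of $\partial G$; the outcome is that $\partial G$ is a $2$-dimensional ANR homology manifold with the \v{C}ech cohomology of $S^2$. This is essentially the content of Bestvina's Theorem~2.8 and Remark~2.9.

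Third, I would invoke the classical fact that every compact connected $2$-dimensional ANR homology manifold is already a topological $2$-manifold, a recognition result that is much softer than the analogous higher-dimensional manifold-recognition theorems of Cannon, Edwards, and Quinn. By the classification of closed surfaces, any such surface with the cohomology of $S^2$ is $S^2$, and the theorem follows. The main obstacle is the second step: transferring the global duality $\check{H}^*(\partial G) \cong H^{*+1}(G;\IZ G)$ into a local duality at every boundary point. This requires genuine control on the geometry of the Rips-complex compactification in order to run local \v{C}ech cohomology computations, and it is precisely here that the hyperbolicity of $G$ (rather than just the PD property) is used in an essential way. Once the local homology-manifold structure is in place, the dimension $2$ recognition is classical and the rest is formal.
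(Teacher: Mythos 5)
Your argument is correct and follows precisely the route the paper takes: the paper's entire proof consists of citing Bestvina~\cite[Theorem~2.8 and Remark~2.9]{Bestvina(1996)}, and your three steps (global \v{C}ech cohomology via the Bestvina--Mess formula, propagation to local homology manifold structure, and the classical $2$-dimensional manifold recognition theorem) accurately unpack what that reference establishes. The one subtlety worth underlining is that the ANR property of $\partial G$ (equivalently, cohomological local connectedness) is \emph{not} automatic for hyperbolic boundaries -- Menger-curve or Sierpi\'nski-carpet boundaries are not ANRs -- so it must genuinely be extracted from the Poincar\'e duality hypothesis in the local analysis, which is indeed what Bestvina's Theorem~2.8 does and what your second step correctly asserts.
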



\subsection{High-dimensions}

\begin{theorem}[Poincar\'e duality groups and $\ENR$ homology manifolds]%
\label{the:FJ_and_Borel-existence}
  Let $G$ be a finitely presented torsion-free group which is a Farrell-Jones group.
  \begin{enumerate}

  \item\label{the:FJ_and_Borel-existence:ex} Then for $n \geq 6$ the following are
    equivalent:
    \begin{enumerate}
    \item\label{the:FJ_and_Borel:duality} $G$ is a Poincar\'e duality group of dimension
      $n$;
    \item\label{the:FJ_and_Borel:homology-mfd} There exists a closed $\ENR$ homology
      manifold $M$ homotopy equivalent to $BG$.  In particular, $M$ is aspherical and
      $\pi_1(M) \cong G$;
    \end{enumerate}
  \item\label{the:FJ_and_Borel-existence:DDP} If the statements in
    assertion~\eqref{the:FJ_and_Borel-existence:ex} hold, then the closed $\ENR$ homology
    manifold $M$ appearing there can be arranged to have the DDP, see Definition~\ref{def:DDP};
  \item\label{the:FJ_and_Borel-existence:uniqueness} If the statements in
    assertion~\eqref{the:FJ_and_Borel-existence:ex} hold, then the closed  $\ENR$ homology
   manifold $M$ appearing there is unique up to $s$-cobordism of $\ENR$ homology
    manifolds;

  \end{enumerate}
\end{theorem}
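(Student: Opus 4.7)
The implication \eqref{the:FJ_and_Borel:homology-mfd} $\Rightarrow$ \eqref{the:FJ_and_Borel:duality} is immediate: a closed ENR homology manifold $M$ satisfies Poincar\'e duality and, by West's theorem, is homotopy equivalent to a finite CW complex, so if $M \simeq BG$ then $G$ is of type FP with $H^i(G;\IZ G)$ as required. For the substantive direction \eqref{the:FJ_and_Borel:duality} $\Rightarrow$ \eqref{the:FJ_and_Borel:homology-mfd}, together with \eqref{the:FJ_and_Borel-existence:DDP} and \eqref{the:FJ_and_Borel-existence:uniqueness}, my plan is to feed $BG$ into the Bryant--Ferry--Mio--Weinberger existence-and-uniqueness machinery for closed ENR homology manifolds, using the Full Farrell--Jones Conjecture to annihilate every obstruction that arises.

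To apply BFMW one first needs a finite Poincar\'e complex model for $BG$. Since $G$ is a finitely presented $PD_n$-group, the Johnson--Wall theorem recalled in the excerpt makes $BG$ into a finitely dominated $n$-dimensional Poincar\'e complex. Because $G$ is torsion-free and Farrell--Jones, the $K$-theoretic assembly map $H_0(BG;\bfK(\IZ)) \to K_0(\IZ G)$ is an isomorphism; the reduced summand $\widetilde K_0(\IZ) = 0$ on the source gives $\widetilde K_0(\IZ G) = 0$, and Wall's finiteness obstruction then yields a finite $n$-dimensional CW model $X \simeq BG$. Feeding $X$ into the BFMW existence theorem, existence of a closed ENR homology manifold homotopy equivalent to $X$ is governed by a total surgery obstruction lying in (a quotient of) the cofiber of the periodic assembly map
\[
H_n\bigl(BG;\bfL^{-\infty}(\IZ)\bigr) \longrightarrow L_n^{-\infty}(\IZ G).
\]
The Full Farrell--Jones Conjecture identifies this, via torsion-freeness of $G$, with the equivariant assembly map over $\calvcyc$, which is an isomorphism; so the obstruction vanishes, producing $M \simeq BG$. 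The DDP refinement in the BFMW existence theorem then lets one arrange $M$ to have the disjoint disks property, proving~\eqref{the:FJ_and_Borel-existence:DDP}.

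For uniqueness up to $s$-cobordism, I would appeal to the BFMW surgery exact sequence for $BG$: the set of closed ENR homology manifolds homotopy equivalent to $BG$ modulo $s$-cobordism is controlled by the same periodic $L$-theory assembly map, now combined with the $\Wh$- and $\widetilde K_0$-parts of the theory. Torsion-freeness together with the Full Farrell--Jones Conjecture kills both $\Wh(G)$ and $\widetilde K_0(\IZ G)$ and makes the $L$-theoretic assembly an isomorphism, so the structure set reduces to a single element, which gives~\eqref{the:FJ_and_Borel-existence:uniqueness}. The main technical point, and essentially the only place where any real care is needed, is translating the Full Farrell--Jones Conjecture (stated over the family $\calvcyc$ with twisted coefficients) into the classical statement that the periodic assembly map with untwisted $\IZ$-coefficients over $BG$ appearing in BFMW is an isomorphism; this passage is standard for torsion-free groups, using the vanishing of the relevant virtually cyclic Nil- and UNil-contributions, but it is the one piece of bookkeeping that must be done carefully.
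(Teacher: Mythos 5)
Your proposal takes essentially the same route as the paper, which proves Theorem~\ref{the:FJ_and_Borel-existence} by citing Bartels--L\"uck--Weinberger~\cite[Theorem~1.2]{Bartels-Lueck-Weinberger(2010)} together with the BFMW surgery theory for $\ENR$ homology manifolds; your sketch (finite Poincar\'e model for $BG$ via Johnson--Wall plus $\widetilde K_0(\IZ G)=0$, then vanishing of the $4$-periodic BFMW structure groups because the Farrell--Jones assembly is an isomorphism) is exactly how that reference argues. The bookkeeping you flag at the end---reducing the $\calvcyc$-family equivariant assembly to the classical one for torsion-free groups, and using vanishing of $\Wh(G)$, $\widetilde K_0(\IZ G)$, and the negative $K$-groups to make decorations irrelevant---is indeed the part requiring care there.
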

\begin{proof} See
Bartels-L\"uck-Weinberger~\cite[Theorem~1.2]{Bartels-Lueck-Weinberger(2010)}. It relies strongly on 
the surgery theory for $\ENR$ homology manifolds,  see for instance~\cite{Bryant-Ferry-Mio-Weinberger(1996), Ferry(2010),
Pedersen-Quinn-Ranicki(2003)}.
\end{proof}

The question whether a closed $\ENR$ homology manifold, which has dimension $\ge 5$ and has
the DDP, is a topological manifold is decided by Quinn's obstruction, see
Section~\ref{sec:Short_review_of_Quinns_obstruction}.

More information about Poincar\'e duality groups can be found for
instance~\cite{Davis(2000Poin)} and~\cite{Wall(2004)}.


\typeout{------------------------------- Section 2: Short review of the Cannon Conjecture
  --------------------------------}

\section{Short review of the Cannon Conjecture}
\label{sec:Short_review_of_the_Cannon_Conjecture}

The following conjecture is taken from~\cite[Conjecture~5.1]{Cannon-Swenson(1998)}.

\begin{conjecture}[Cannon Conjecture]\label{con:Cannon_conjecture} Let $G$ be a
  hyperbolic group. Suppose that its boundary is homeomorphic to $S^2$.

  Then $G$ acts properly cocompactly and isometrically on the $3$-dimensional hyperbolic
  space.
\end{conjecture}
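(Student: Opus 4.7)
The strategy is to conjugate the action of $G$ on its boundary to a classical M\"obius action on $S^2$, which then extends to a proper isometric cocompact action on $\IH^3$. In the torsion-free case the conjecture reduces at once to conjectures already on the table: a torsion-free hyperbolic $G$ with $\partial G \cong S^2$ is a $3$-dimensional Poincar\'e duality group by Bestvina-Mess, so granting Wall's Conjecture~\ref{con:Poincare_duality_groups_of_dimension_three} and invoking Perelman's proof of Geometrization (which forces the resulting aspherical $3$-manifold to be hyperbolic, since hyperbolicity of $G$ rules out $\IZ \oplus \IZ$ subgroups and thus toroidal or Seifert behaviour) the conclusion drops out. The real task is to give an unconditional argument that also covers torsion; for this one must work directly on the boundary.

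Equip $\partial G$ with a visual metric; the $G$-action is then a uniform convergence action by uniformly quasi-M\"obius homeomorphisms. Bonk-Kleiner characterize the round sphere among metric $2$-spheres as those that are Ahlfors $2$-regular and linearly locally connected, and the latter property is known for $\partial G$ from one-endedness and the fact that the boundary is a sphere. If one can further produce an Ahlfors $2$-regular metric in the quasisymmetric gauge of $\partial G$, then Bonk-Kleiner delivers a quasisymmetric equivalence $h \colon \partial G \to S^2_{\mathrm{round}}$ that conjugates the $G$-action to a uniformly quasi-conformal action. The Sullivan-Tukia rigidity theorem then conjugates this further to a M\"obius representation $\rho \colon G \to \Isom(\IH^3)$, and the classical equivalence between uniform convergence actions on $S^2$ and discrete cocompact Kleinian groups promotes $\rho$ to a proper isometric cocompact action on hyperbolic $3$-space.

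The main obstacle is precisely the production of the Ahlfors $2$-regular metric in the quasisymmetric gauge, equivalently the statement that the Ahlfors regular conformal dimension of $\partial G$ equals $2$ and is attained. Keith-Kleiner shows that this dimension is always at least $2$; Ha\"issinsky has reformulated the attainment question through the combinatorial Loewner property, and Cannon's original program recasts it as a combinatorial Riemann mapping theorem. None of these routes has so far been closed, and this analytic-dynamical impasse is where I expect the main difficulty to concentrate; any unconditional proof of the conjecture must traverse it.
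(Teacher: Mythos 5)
This statement is labeled \emph{Conjecture} in the paper and is given no proof there; the Cannon Conjecture is a famous open problem, and the whole point of the paper is to prove a \emph{stable} weakening of it (Theorem~\ref{the:stable_Cannon_Conjecture}) rather than the conjecture itself. So there is no argument in the paper for you to be compared against.

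Your writeup is, correctly, not a proof: both routes you sketch terminate at open problems, and you say so. The torsion-free reduction relies on Wall's Conjecture~\ref{con:Poincare_duality_groups_of_dimension_three}, which is itself open, so that branch is conditional. The boundary-dynamics branch (visual metric, uniformly quasi-M\"obius convergence action, Bonk--Kleiner uniformization, Sullivan--Tukia rigidity, and the convergence-group characterization of cocompact Kleinian groups) is the standard and correct reduction, and you rightly locate the unfinished step: producing an Ahlfors $2$-regular metric in the quasisymmetric gauge of $\partial G$, i.e.\ showing that the Ahlfors regular conformal dimension of $\partial G$ is $2$ and is attained. Keith--Kleiner gives the lower bound, and the attainment problem (equivalently Cannon's combinatorial Riemann mapping theorem, or Ha\"issinsky's combinatorial Loewner property route) remains open. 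Presenting this as the crux is accurate. What you have written is a sound survey of the known reductions and of where the difficulty lies, honestly flagged as such; it should not be mistaken for a proof of the conjecture, and it does not claim to be one.

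Two small precision points worth fixing if you keep this as expository text. First, Remark~\ref{rem:Thurston_does_not_imply_Cannon} in the paper emphasizes that the Cannon Conjecture does \emph{not} follow from Geometrization alone, so the phrasing ``invoking Perelman's proof of Geometrization \ldots the conclusion drops out'' should be qualified: Geometrization applies only once one already has a closed aspherical $3$-manifold with fundamental group $G$, which is exactly what Wall's Conjecture would supply and what is missing. Second, the step ``linear local connectivity \ldots is known for $\partial G$ from one-endedness and the fact that the boundary is a sphere'' should cite Bonk--Kleiner or Bestvina--Mess explicitly; it is true but not immediate from one-endedness alone.
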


If $G$ is torsion free, then the Cannon Conjecture~\ref{con:Cannon_conjecture} reduces to
the Cannon Conjecture for torsion free groups~\ref
{con:Cannon_conjecture_in_the_torsionfree_case}.

\begin{remark}\label{rem:Thurston_does_not_imply_Cannon} We mention that
  Conjecture~\ref{con:Cannon_conjecture_in_the_torsionfree_case} is open and does not
  follow from Thurston's Geometrization Conjecture which is known to be true by the work
  of Perelman, see~Morgan-Tian~\cite{Morgan-Tian(2014)}.
\end{remark}

The next result is due to Bestvina-Mess~\cite[Theorem~4.1]{Bestvina-Mess(1991)} and says
that for the Cannon Conjecture one just has to find some  closed aspherical $3$-manifold with $G$ as
fundamental group.

\begin{theorem}\label{the:manifold:realization_is_enough} Let $G$ be a hyperbolic group
  which is the fundamental group of a closed aspherical $3$-manifold $M$.

  Then the universal covering $\widetilde{M}$ of $M$ is homeomorphic to $\IR^3$ and its
  compactification by $\partial G$ is homeomorphic to $D^3$, and the Geometrization
  Conjecture of Thurston implies that $M$ is hyperbolic and $G$ satisfies the Cannon
  Conjecture~\ref{con:Cannon_conjecture_in_the_torsionfree_case}.
\end{theorem}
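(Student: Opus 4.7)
My plan is to combine the Bestvina-Mess compactification theorem for hyperbolic groups with Perelman's proof of Thurston's Geometrization Conjecture. To set the stage, I would note that since $M$ is a closed aspherical $3$-manifold, its fundamental group $G$ is a $3$-dimensional Poincar\'e duality group; combined with the hypothesis that $G$ is hyperbolic, Theorem~\ref{the_hyperbolic_boundary_sphere} gives $\partial G \cong S^2$. The universal covering $\widetilde{M}$ is then a contractible open $3$-manifold on which $G$ acts freely, cocompactly, and properly discontinuously by deck transformations; after choosing a $G$-invariant Riemannian metric the action is by isometries and $\widetilde{M}$ is quasi-isometric to $G$.

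Next I would identify the compactification via Bestvina-Mess. Their work in~\cite{Bestvina-Mess(1991)} equips $\widetilde{M} \cup \partial G$ with a canonical topology making it a compact metrizable ANR in which $\partial G$ is a $Z$-set and the $G$-action extends continuously. When the contractible part is itself a manifold whose dimension equals $\dim(\partial G) + 1$, the analysis culminating in their Theorem~4.1 further identifies the compactification with $D^n$ and $\partial G$ with $\partial D^n \cong S^{n-1}$. Applied with $n = 3$ and $\partial G \cong S^2$, this simultaneously yields $\widetilde{M} \cup \partial G \cong D^3$ and, by restricting to the interior, $\widetilde{M} \cong \IR^3$.

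For the remaining clause I would invoke Perelman's proof of Geometrization as presented in~\cite{Morgan-Tian(2014)}. Because $M$ is closed and aspherical, it is irreducible and $\pi_1(M) = G$ is infinite, so Geometrization asserts that $M$ decomposes along a (possibly empty) finite collection of incompressible tori into geometric pieces. A hyperbolic group contains no subgroup isomorphic to $\IZ \oplus \IZ$; hence $G$ admits no $\pi_1$-injective torus, and no Seifert, Sol, Nil or Euclidean piece can occur in the decomposition, since each such piece would force $\IZ \oplus \IZ$ into $G$. The only remaining geometry is hyperbolic, so $M$ itself is closed hyperbolic and $G$ acts properly, cocompactly, and isometrically on $\IH^3$, which is precisely Conjecture~\ref{con:Cannon_conjecture_in_the_torsionfree_case}.

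The main obstacle is the identification of the Bestvina-Mess compactification with the standard disc $D^3$. Pure $Z$-set theory together with the simple connectivity of $\partial G \cong S^2$ quickly shows that $\widetilde{M}$ is simply connected at infinity, and then classical results of Husch-Price-Luft type give $\widetilde{M} \cong \IR^3$. What is genuinely delicate, and what Bestvina-Mess carry out in dimension $3$, is ruling out wild boundary behavior of the $2$-sphere at infinity and upgrading the abstract $Z$-compactification to a recognizable closed ball; this is where the particular dimension-$3$ matching between $\widetilde{M}$ and $\partial G$ is used essentially.
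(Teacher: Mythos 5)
Your proposal is correct and follows essentially the same route as the paper: the paper gives no proof of its own but simply attributes the topological assertions to Bestvina--Mess~\cite[Theorem~4.1]{Bestvina-Mess(1991)} and the hyperbolicity to Geometrization, and your proposal elaborates exactly that citation, supplementing it with the standard atoroidality argument (hyperbolic groups contain no $\IZ\oplus\IZ$, so no Seifert, Sol, Nil, or Euclidean piece can appear and the JSJ decomposition is trivial) to pin down the hyperbolic geometry. One small remark: the appeal to ``Husch--Price--Luft type'' results to get $\widetilde{M}\cong\IR^3$ from simple connectivity at infinity quietly uses the Poincar\'e Conjecture (now a theorem) to rule out fake $3$-cells, but since you also invoke Perelman's work for Geometrization this is harmless, and in any case the $D^3$-identification is already part of what Bestvina--Mess's Theorem~4.1 provides.
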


Ursula Hamenst\"adt informed us that she has a proof for the following result.

\begin{theorem}[Hamenst\"adt]\label{the:Hamenstaedt}
  Let $G$ be a hyperbolic group $G$ whose boundary is homeomorphic to $S^{n-1}$.

  Then $G$ acts properly and cocompactly on $S^{n-1} \times \IR^n$.
\end{theorem}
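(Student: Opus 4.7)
The plan is to realize $G$ geometrically on its flow space and then identify that flow space, purely topologically, with $S^{n-1}\times\IR^n$.

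First, I would invoke the Gromov--Mineyev flow space construction: for any hyperbolic group $G$ there is a proper metric space $F(G)$, whose underlying topological space is $\partial^2 G\times\IR$ (where $\partial^2 G := \partial G\times\partial G\setminus\Delta$), equipped with a properly discontinuous, cocompact, isometric $G$-action. The action is diagonal on the $\partial^2 G$-factor via the boundary convergence action and acts on the $\IR$-factor through a cocycle $c\colon G\times\partial^2 G\to\IR$ measuring displacement along oriented geodesics; for a cocompact lattice $G\subset\mathrm{SO}(n,1)$ this is literally the unit tangent bundle of $\IH^n$ with its standard isometric action.

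Second, I would perform a topological identification of the flow space using the hypothesis $\partial G=S^{n-1}$. Fix any homeomorphism $f\colon[0,\infty)\to(0,\pi]$ with $f(0)=\pi$ (e.g.\ $f(r)=\pi/(1+r)$) and define
\[
  \phi\colon TS^{n-1}\longrightarrow (S^{n-1})^2\setminus\Delta, \quad (p,v)\longmapsto (p,\exp_p(f(|v|)\cdot v/|v|))\text{ for }v\neq 0,\quad (p,0)\longmapsto(p,-p).
\]
A direct verification shows that $\phi$ is a homeomorphism. Stable parallelizability of $S^{n-1}\subset\IR^n$ gives a vector-bundle isomorphism $TS^{n-1}\oplus\underline{\IR}\cong\underline{\IR^n}$; passing to total spaces yields $TS^{n-1}\times\IR\cong S^{n-1}\times\IR^n$. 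Composing,
\[
  F(G)\;\cong\;\partial^2 G\times\IR\;\cong\;TS^{n-1}\times\IR\;\cong\;S^{n-1}\times\IR^n,
\]
and transporting the flow-space $G$-action through this chain of homeomorphisms produces the desired proper cocompact action.

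The principal obstacle lies in the first step. The diagonal $G$-action on $\partial^2 G$ alone is proper but not cocompact; only after adjoining the $\IR$-factor with a well-chosen cocycle can one compactify the orbit space. Producing such a cocycle, and verifying that it arises from a genuine proper metric whose underlying topology is the product $\partial^2 G\times\IR$, is the content of Mineyev's symmetric join theorem for hyperbolic groups. Once this is in hand, the identification $\partial^2 G\cong TS^{n-1}$ together with the stable parallelism of $S^{n-1}$ reduces the rest of the argument to elementary bundle theory. Torsion in $G$ causes no difficulty, since the flow space construction admits finite stabilizers and properly discontinuous actions tolerate them.
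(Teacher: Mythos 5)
The paper does not prove this statement: it is attributed to unpublished work of Hamenst\"adt, communicated to the authors, with no indication of her method beyond the remark that it is ``completely different'' from the paper's surgery-theoretic arguments and that it requires no torsion-freeness hypothesis. So there is nothing in the paper to compare your argument against; I will assess it on its own.

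Your outline is sound. Mineyev's flow space does furnish, for any hyperbolic $G$, a proper $G$-invariant metric on $\partial^2 G\times\IR$ inducing the product topology, on which $G$ acts properly, cocompactly, and isometrically, with no torsion-freeness assumption. The two topological identifications you invoke --- $\partial^2 G\cong TS^{n-1}$ via your explicit $\phi$, and $TS^{n-1}\times\IR\cong S^{n-1}\times\IR^n$ via the triviality of $TS^{n-1}\oplus\underline{\IR}$ --- are both correct, and transporting the flow-space action through these homeomorphisms gives exactly the claimed action on $S^{n-1}\times\IR^n$.

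There is one genuine error, though it lies in a motivational remark rather than a load-bearing step. You write that the diagonal action $G\curvearrowright\partial^2 G$ is ``proper but not cocompact'' and that the $\IR$-factor restores cocompactness; it is exactly the reverse. The diagonal action on $\partial^2 G$ is already cocompact: every pair $(\xi^-,\xi^+)$ translates into the set of pairs whose Gromov product $(\xi^-\,|\,\xi^+)_e$ at the basepoint is bounded by a constant depending only on a fundamental domain for $G$ on a Rips complex, and that set is a compact subset of $\partial^2 G$, since a bound on the Gromov product keeps it away from the diagonal. On the other hand the action on $\partial^2 G$ is \emph{never} proper for nonelementary $G$: the fixed pair $(g^-,g^+)$ of any loxodromic $g$ has stabilizer containing the infinite cyclic group $\langle g\rangle$. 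It is precisely the $\IR$-factor together with the translation cocycle that kills these infinite stabilizers and restores properness. Since your argument ultimately uses only that the action on $\partial^2 G\times\IR$ is proper and cocompact --- a black-box appeal to Mineyev --- the slip does not propagate to the conclusion, but the explanation should be corrected.
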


Hamenst\"adt's result is proved by completely different methods and does not need the
assumption that $G$ is torsion free.  It aims for $n = 3$ at construction of the sphere tangent bundle of the 
universal covering  of the conjectured hyperbolic $3$-manifold $M$ appearing in the Cannon
Conjecture~\ref{con:Cannon_conjecture}, where we aim at constructing $M$ for $BG \times N$ for any
closed  manifold $N$ with $\dim(N) \ge 2$.


\subsection{The high-dimensional analogue of the Cannon Conjecture}

The following result is taken from~\cite[Theorem~A]{Bartels-Lueck-Weinberger(2010)}.

\begin{theorem}[High-dimensional Cannon Conjecture]%
\label{the:High-dimensional_Cannon_Conjecture}
  Let $G$ be a torsion free hyperbolic group and let $n$ be an integer $\geq 6$. The
  following statements are equivalent:
  \begin{enumerate}
  \item\label{the:High-dimensional_Cannon_Conjecture:sphere} The boundary $\partial G$ is
    homeomorphic to $S^{n-1}$;

  \item\label{the:High-dimensional_Cannon_Conjecture:manifold} There is a 
    closed aspherical topological manifold $M$ such that $G \cong \pi_1(M)$, its universal
    covering $\widetilde{M}$ is homeomorphic to $\IR^n$ and the compactification of
    $\widetilde{M}$ by $\partial G$ is homeomorphic to $D^n$;

  \end{enumerate}

  Moreover, the aspherical manifold $M$ appearing in
  assertion~\eqref{the:High-dimensional_Cannon_Conjecture:manifold} is unique up to
  homeomorphism.
\end{theorem}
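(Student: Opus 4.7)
The plan is to prove the two directions of the equivalence separately and then deduce uniqueness from the Borel theorem already recorded. The implication from \eqref{the:High-dimensional_Cannon_Conjecture:manifold} to \eqref{the:High-dimensional_Cannon_Conjecture:sphere} is immediate: if $\widetilde{M}$ admits a compactification by $\partial G$ homeomorphic to $D^n$, then the frontier $\partial G$ is a sphere $S^{n-1}$ by definition. All the work lies in the reverse direction and in the identification of the compactification.

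For \eqref{the:High-dimensional_Cannon_Conjecture:sphere} $\Rightarrow$ \eqref{the:High-dimensional_Cannon_Conjecture:manifold}, I would proceed in the following order. First, invoke Bestvina--Mess to conclude that a torsion-free hyperbolic $G$ whose boundary is a homology $(n-1)$-sphere is a Poincar\'e duality group of dimension $n$; in our case $\partial G\cong S^{n-1}$ trivially satisfies this. Since $G$ is hyperbolic it is finitely presented, torsion-free by hypothesis, and a Farrell--Jones group (as quoted after Theorem~\ref{the:stable_Cannon_Conjecture}). Therefore Theorem~\ref{the:FJ_and_Borel-existence} applies and supplies a closed aspherical $\ENR$ homology manifold $M$ homotopy equivalent to $BG$ with $\pi_1(M)\cong G$, which we may arrange to have the DDP.

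The main obstacle is promoting $M$ from an $\ENR$ homology manifold with DDP to a genuine topological manifold, and then identifying the universal cover together with its Gromov boundary as the standard disk. The first part is governed by Quinn's resolution obstruction $i(M)$ (reviewed in the indicated Section). This invariant is locally constant on a connected $\ENR$ homology manifold, so it suffices to verify vanishing at one point of the Bestvina--Mess $Z$-set compactification $\widetilde{M}\cup\partial G$: since $\partial G\cong S^{n-1}$ is locally Euclidean, standard controlled topology produces a chart near any boundary point that forces $i(M)=0$. Granted this, $M$ is a topological manifold. For the identification step, the Bestvina--Mess $Z$-set compactification shows that $\widetilde{M}\cup\partial G$ is a compact ANR with boundary $S^{n-1}$ collared as a $Z$-set, and by the double suspension theorem plus the Edwards/Cannon recognition principle in dimensions $\ge 6$ this compactum is homeomorphic to $D^n$; restricting to the interior gives $\widetilde{M}\cong\IR^n$.

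Finally, uniqueness up to homeomorphism is a direct application of Theorem~\ref{the:Borel}: any two closed aspherical topological $n$-manifolds with $\pi_1\cong G$ are linked by a map inducing an isomorphism on fundamental groups (since they are both $BG$), and because $G$ is a Farrell--Jones group and $n\ge 6\ge 5$, such a map is homotopic to a homeomorphism. The technical heart of the whole argument is the combination of the surgery theory for $\ENR$ homology manifolds of Bryant--Ferry--Mio--Weinberger that underlies Theorem~\ref{the:FJ_and_Borel-existence} with the controlled vanishing of Quinn's obstruction forced by the spherical boundary; everything else is assembly of already available tools.
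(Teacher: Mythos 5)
The theorem you are proving is not proved in this paper; it is quoted verbatim from Bartels--L\"uck--Weinberger, and the machinery the paper develops in Sections~\ref{sec:pulling_back_boundaries}--\ref{sec:Proof_of_the_main_theorem} is precisely an adaptation of that argument. Your overall outline (Bestvina--Mess gives Poincar\'e duality, Theorem~\ref{the:FJ_and_Borel-existence} gives an $\ENR$ homology manifold model with DDP, Quinn's obstruction near the boundary sphere kills the resolution obstruction, then identify the compactified universal cover with $D^n$, with uniqueness from Theorem~\ref{the:Borel}) is the right skeleton and matches the cited proof. However, two steps as you have written them do not go through.

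First, the Quinn step. You want to compute $\iota(\widetilde{M})$ by looking ``near a boundary point'' of $\widetilde{M}\cup\partial G$, but for this you need $\widetilde{M}\cup\partial G$ to actually \emph{be} an $\ENR$ homology manifold with boundary in which $\partial G$ is a $Z$-set, so that Lemma~\ref{lem:Quinn_obstruction_and_boundary} applies and gives $\iota(\widetilde{M})=1$ (note the value is $1$, not $0$; the invariant lives in $1+8\IZ$). Bestvina--Mess compactify the Rips complex $P_l(G)$, not $\widetilde{M}$. Transferring the $Z$-set compactification along the $G$-homotopy equivalence $\widetilde{M}\to P_l(G)$ is a real piece of work: one needs the homotopy equivalence to be continuously controlled at infinity, and one must verify that the pulled-back compactification is again a compact $\ENR$ with a $Z$-set boundary; this is exactly what Section~\ref{sec:pulling_back_boundaries}, Lemma~\ref{lem:pulling_back_Z-sets}, and Lemma~\ref{lem:getting_continuous_control} establish. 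Only then do Lemma~\ref{lem:Z-set-yields-homology-mfd-with-boundary} and Lemma~\ref{lem:Quinn_obstruction_and_boundary} yield $\iota(\widetilde{M})=1$, and locality gives $\iota(M)=1$, so $M$ is a topological manifold by Theorem~\ref{the:Quinn-obstruction}. Your ``standard controlled topology produces a chart'' phrase hides all of this.

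Second, and more seriously, the final identification is wrong as stated. Once $\widetilde{M}$ is a topological manifold, the double suspension theorem and the Edwards/Cannon recognition principle are the wrong tools: those serve to detect \emph{manifoldness} of a homology manifold, which you have already achieved via Quinn's obstruction, and they say nothing about recognizing a disk. The correct conclusion is obtained as in the proof of Theorem~\ref{the:stable_Cannon_Conjecture}: since the interior $\widetilde{M}$ of $\widetilde{M}\cup\partial G$ is a manifold and the $Z$-set boundary $\partial G\cong S^{n-1}$ is a manifold that is automatically $1$-LCC, the collaring theorem of Ferry--Seebeck (quoted from Daverman) makes $\widetilde{M}\cup\partial G$ a compact topological manifold with boundary $S^{n-1}$. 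It is contractible because $\partial G$ is a $Z$-set. A compact contractible topological $n$-manifold with boundary $S^{n-1}$ and $n\ge 6$ is homeomorphic to $D^n$ by the high-dimensional Poincar\'e conjecture and the topological $h$-cobordism theorem (equivalently one can invoke the relative Borel Conjecture as the paper does); restricting to the interior gives $\widetilde{M}\cong \IR^n$. Without replacing your last sentence by this argument, the proof does not close.
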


In high dimensions there are exotic examples of hyperbolic $n$-dimensional Poin\-ca\-r\'e
duality groups $G$, see~\cite[Section~5]{Bartels-Lueck-Weinberger(2010)}. For instance,
for any integer $k \ge 2$ there are examples satisfying $\partial G = S^{4k+1}$ such that
$G$ is the fundamental group of a closed aspherical topological manifold, but not of an
closed aspherical smooth manifold.  For $n \ge 6$ there exists a closed aspherical
topological manifold whose fundamental group is hyperbolic but which cannot be
triangulated, see~\cite[page~800]{Davis-Fowler-Lafont(2014)}.

We mention without  giving the details that using the method of this paper one can 
prove  Theorem~\ref{the:High-dimensional_Cannon_Conjecture} also in the case $n = 5$.


\subsection{The Cannon Conjecture~\ref{con:Cannon_conjecture_in_the_torsionfree_case} in
  the torsion free case implies Theorem~\ref{the:Vanishing_of_the_surgery_obstruction} and
  Theorem~\ref{the:stable_Cannon_Conjecture}}
\label{subsec:Cannon_implies_main_Theorem}

Let $G$ be hyperbolic $3$-dimensional Poincar\'e duality group.  We want to show that then
all claims in Theorem~\ref{the:Vanishing_of_the_surgery_obstruction} and
Theorem~\ref{the:stable_Cannon_Conjecture} are obviously true, provided that the Cannon
Conjecture~\ref{con:Cannon_conjecture_in_the_torsionfree_case} in the torsion free case
holds for $G$.

We know already that there is a $3$-dimensional finite model\footnote{Since $G$ is hyperbolic, $\widetilde{K}_0(\IZ G)$ vanishes.  
The rest follows from Subsection1.1 above.} for $BG$  and $\partial G$ is
$S^2$.  By the Cannon Conjecture~\ref{con:Cannon_conjecture_in_the_torsionfree_case} we
can find a closed hyperbolic $3$-manifold $M$ together with a homotopy equivalence
$f \colon M \to BG$. Since $G$ is a Farrell-Jones group, $f$ is a simple homotopy
equivalence. We obviously can cover $f$ by a bundle map
$\overline{f} \colon TM \to \xi$ if we take $\xi$ to be $(f^{-1})^*TM$ for some homotopy
inverse $f^{-1} \colon BG \to M$ of $f$. Hence we get
Theorem~\ref{the:Vanishing_of_the_surgery_obstruction} and
assertion~\eqref{the:stable_Cannon_Conjecture:simple} of
Theorem~\ref{the:stable_Cannon_Conjecture}. It remains to prove
assertion~\eqref{the:stable_Cannon_Conjecture:universal_covering} of
Theorem~\ref{the:stable_Cannon_Conjecture}.

The universal covering $\widetilde{M}$ is the hyperbolic $3$-space. Hence it is
homeomorphic to $\IR^3$ and the compactification
$\overline{\widetilde{M}} = \widetilde{M} \cup \partial G$ is homeomorphic to $D^3$.  In
particular $\overline{\widetilde{M}}$ is a compact manifold whose interior is
$\widetilde{M}$ and whose boundary is $S^2$.  Hence $\overline{\widetilde{M}} \times N$ is
a compact manifold and there is a homeomorphism
$(\overline{\widetilde{M}} \times N, \partial(\overline{\widetilde{M}} \times N))
\xrightarrow{\cong} (D^3 \times N, S^2 \times N)$.


\subsection{When does the Cannon
  Conjecture~\ref{con:Cannon_conjecture_in_the_torsionfree_case} in the torsion free case
  follow from Theorem~\ref{the:stable_Cannon_Conjecture}}
\label{subsec:Cannon_from_main_Theorem}

Next we discuss what would be needed to conclude the Cannon
Conjecture~\ref{con:Cannon_conjecture_in_the_torsionfree_case} in the torsion free case
from Theorem~\ref{the:stable_Cannon_Conjecture}.

Let $G$ be a hyperbolic group such that $\partial G$ is $S^2$. Then $G$ is a
$3$-dimensional Poincar\'e duality group by
Bestvina-Mess~\cite[Corollary~1.3]{Bestvina-Mess(1991)}.   Fix any
closed aspherical manifold $N$ of dimension $\ge 2$ such that $\pi_1(N)$ is a
Farrell-Jones group.

We get from Theorem~\ref{the:stable_Cannon_Conjecture} a closed aspherical
$(3+\dim(N))$-dimensional manifold $M$ together with a homotopy equivalence
$f \colon M\to BG \times N$.  Let
$\alpha \colon \pi_1(M) \xrightarrow{\cong} G \times \pi_1(N)$ be the isomorphism
$\pi_1(f)$.  If $M'$ is any other closed aspherical manifold together with an isomorphism
$\alpha' \colon \pi_1(M') \xrightarrow{\cong} G \times \pi_1(N)$, then we conclude from
Theorem~\ref{the:status_of_the_Full_Farrell-Jones_Conjecture}~%
\eqref{the:status_of_the_Full_Farrell-Jones_Conjecture:Classes_of_groups:hyperbolic_groups}
 and~\eqref{the:status_of_the_Full_Farrell-Jones_Conjecture:inheritance:Passing_to_finite_direct_products}
that $\pi_1(M) \cong G \times \pi_1(N)$ is a Farrell-Jones group and  from
Theorem~\ref{the:Borel} that there exists a homeomorphism $u \colon M \to M'$ such that
$\alpha' \circ \pi_1(u)$ and $\alpha$ agree (up to inner automorphisms).  Hence the pair
$(M,\alpha)$ is unique and thus an invariant depending on $G$ and $N$ only.

What does the Cannon Conjecture~\ref{con:Cannon_conjecture_in_the_torsionfree_case} tell us about $(M,\alpha)$
and what do we need to know about $(M,\alpha)$ in order to prove the Cannon
Conjecture~\ref{con:Cannon_conjecture_in_the_torsionfree_case}? This is answered by the
next result.

\begin{lemma}\label{lem:comparing_general_N} Assume Theorem~\ref{the:stable_Cannon_Conjecture}
for a given $(G,N)$ and consider the above unique $(M,\alpha)$.  The following statements are equivalent

  \begin{enumerate}

  \item\label{lem:comparing_general_N:(1)} The Cannon
  Conjecture~\ref{con:Cannon_conjecture_in_the_torsionfree_case} holds for $G$;

  \item\label{lem:comparing_general_N:(2)}
  There is a closed $3$-manifold $M'$ and a homeomorphism
  $h \colon M \xrightarrow{\cong} M'\times N$ such that for the projection
  $p \colon M' \times N \to N$ the map $\pi_1(p \circ h)$
  agrees with the composite
  $\pi_1(M) \xrightarrow{\alpha} G \times \pi_1(N) \xrightarrow{\pr} \pi_1(N)$ for $\pr$
  the projection;

  \item\label{lem:comparing_general_N:(3)}
  There is a closed $3$-manifold $M'$ and a map
  $p \colon M \to N$ with  homotopy fiber $M'$ such that $\pi_1(p)$
  agrees with the composite
  $\pi_1(M) \xrightarrow{\alpha} G \times \pi_1(N) \xrightarrow{\pr} \pi_1(N)$ for $\pr$
  the projection.

\end{enumerate}
\end{lemma}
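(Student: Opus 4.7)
The plan is to prove the cyclic chain of implications $(1)\Rightarrow(2)\Rightarrow(3)\Rightarrow(1)$, with Theorem~\ref{the:manifold:realization_is_enough} of Bestvina--Mess providing the bridge between having a closed aspherical $3$-manifold model for $BG$ and the full Cannon Conjecture.

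For $(1)\Rightarrow(2)$, I would use the assumed Cannon Conjecture to produce a closed hyperbolic $3$-manifold $M'$ together with an isomorphism $\beta \colon \pi_1(M') \xrightarrow{\cong} G$. Then $M' \times N$ is a closed aspherical manifold of dimension $3+\dim(N)$ whose fundamental group is canonically $G \times \pi_1(N)$ via $\beta \times \id$. Both $M$ and $M'\times N$ are aspherical and have isomorphic fundamental groups; moreover, by Theorem~\ref{the:status_of_the_Full_Farrell-Jones_Conjecture} the group $G \times \pi_1(N)$ is Farrell--Jones. Applying the Borel Conjecture (Theorem~\ref{the:Borel}) to any map $M \to M' \times N$ realizing, up to an inner automorphism, $(\beta \times \id)^{-1}\circ \alpha$, I get the desired homeomorphism $h$ whose induced map on $\pi_1$, after postcomposing with the projection to $\pi_1(N)$, is exactly $\pr \circ \alpha$.

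For $(2)\Rightarrow(3)$, I would simply set $p = \pr_N \circ h \colon M \to N$. Since $p$ is a locally trivial fibre bundle with fibre $M'$, its homotopy fibre is $M'$, and the $\pi_1$-condition is immediate from the one in $(2)$. For $(3)\Rightarrow(1)$, the homotopy long exact sequence of the fibration $M' \to M \to N$ identifies $\pi_1(M')$ with $\ker(\pi_1(p)) = \ker(\pr \circ \alpha) \cong G$ and shows $\pi_n(M') \cong \pi_n(M) = 0$ for $n \ge 2$ using that $N$ is aspherical. Hence $M'$ is a closed \emph{aspherical} $3$-manifold with fundamental group $G$, and Theorem~\ref{the:manifold:realization_is_enough} then upgrades this to the Cannon Conjecture~\ref{con:Cannon_conjecture_in_the_torsionfree_case} for $G$ via Perelman's proof of Geometrization.

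The only delicate step is $(1)\Rightarrow(2)$: to invoke Theorem~\ref{the:Borel} we need a reference map $M \to M' \times N$ inducing the correct isomorphism on $\pi_1$, and we need to track that the prescribed isomorphism $\alpha$ matches the projection data after postcomposition with $h$. This is a bookkeeping issue about inner automorphisms and the fact that both $M$ and $M' \times N$ are classifying spaces for $G \times \pi_1(N)$, so any prescribed group-theoretic isomorphism lifts to a unique homotopy class of based homotopy equivalences. The remaining implications are essentially formal consequences of the long exact sequence of a fibration and Bestvina--Mess's reduction theorem, so no further obstacle arises.
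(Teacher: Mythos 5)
Your proof is correct and follows essentially the same route as the paper's: for $(1)\Rightarrow(2)$ take the hyperbolic $3$-manifold $M'$ with fundamental group $G$, note $M'\times N$ models $B(G\times\pi_1(N))$, apply the Borel Conjecture to upgrade a homotopy equivalence realizing $\alpha$ to a homeomorphism; $(2)\Rightarrow(3)$ is formal; and for $(3)\Rightarrow(1)$ use the long exact sequence of the homotopy fibration to see that $M'$ is aspherical with $\pi_1\cong G$, then invoke Theorem~\ref{the:manifold:realization_is_enough}. The extra bookkeeping you flag in $(1)\Rightarrow(2)$ is handled in the paper just as you describe, by the uniqueness of maps between aspherical spaces realizing a prescribed homomorphism on $\pi_1$ up to homotopy, so there is no genuine gap.
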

\begin{proof}~\eqref{lem:comparing_general_N:(1)} $\implies$~\eqref{lem:comparing_general_N:(2)}.
By the Cannon  Conjecture~\ref{con:Cannon_conjecture_in_the_torsionfree_case} there exists
a closed hyperbolic $3$-manifold $M'$ with $\pi(M') = G$.
Since $M'$ models $BG$, we can find a homotopy equivalence  $h \colon M \to M' \times N$ with
$\pi_1(h) = \alpha$. By Theorem~\ref{the:Borel} we can assume that $h$ is a homeomorphism.
\\[1mm]~\eqref{lem:comparing_general_N:(2)} $\implies$~\eqref{lem:comparing_general_N:(3)}
This is obvious.
\\[1mm]~\eqref{lem:comparing_general_N:(3)} $\implies$~\eqref{lem:comparing_general_N:(1)}
The long exact homotopy sequence associated to $p$  implies that $\pi_1(M') \cong G$
and $M'$ is aspherical. We conclude from Theorem~\ref{the:manifold:realization_is_enough}
that $M'$ is a closed hyperbolic $3$-manifold. Hence $G$ satisfies the Cannon
Conjecture~\ref{con:Cannon_conjecture_in_the_torsionfree_case}.
\end{proof}


\subsection{The special case $N = T^k$}
\label{subsec:The_special_case_N-Is_Tk}

Now suppose that in the situation of Subsection~\ref{subsec:Cannon_from_main_Theorem} we
take $N = T^k$ for some $k \ge 2$. Then we get a criterion, where $\alpha$ does not appear
anymore.

\begin{lemma}\label{lem:comparing_T_upper_k} Fix an integer $k \ge 2$.
Let $M$ be a closed aspherical $(3+k)$-dimensional manifold with fundamental group 
$G \times \IZ^k$, where $G$ is hyperbolic with $\partial G = S^{2}$.
Then the  following statements are equivalent:
\begin{enumerate}

  \item\label{lem:comparing_T_upper_k:(1)} The Cannon
  Conjecture~\ref{con:Cannon_conjecture_in_the_torsionfree_case} holds for $G$;

  \item\label{lem:comparing_T_upper_k:(2)}
  There is closed $3$-manifold $M'$ together with  a homeomorphism
  $h \colon M \xrightarrow{\cong} M'\times T^k$;

  \item\label{lem:comparing_T_upper_k:(3)}
  There is a closed $3$-manifold $M'$ and a map
  $p \colon M \to T^k$ with  homotopy fiber $M'$.

\end{enumerate}
\end{lemma}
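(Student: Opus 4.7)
The plan is to establish the cycle $(1) \Rightarrow (2) \Rightarrow (3) \Rightarrow (1)$, with essentially all work concentrated in the last implication. For $(1) \Rightarrow (2)$, the Cannon Conjecture~\ref{con:Cannon_conjecture_in_the_torsionfree_case} supplies a closed hyperbolic $3$-manifold $M_0$ with $\pi_1(M_0) \cong G$, so $M_0 \times T^k$ is a closed aspherical manifold with the same fundamental group as $M$; since $G \times \IZ^k$ is a Farrell-Jones group (hyperbolic groups and $\IZ^k$ are Farrell-Jones and the class is closed under finite products), Theorem~\ref{the:Borel} produces a homeomorphism $M \cong M_0 \times T^k$. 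The step $(2) \Rightarrow (3)$ is immediate by composing with the projection $M_0 \times T^k \to T^k$.

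For $(3) \Rightarrow (1)$, the long exact homotopy sequence of $p \colon M \to T^k$---combined with $\pi_n(T^k) = 0$ for $n \ge 2$ and asphericity of $M$---forces $M'$ to be aspherical and yields an exact sequence $1 \to L \to G \times \IZ^k \xrightarrow{\phi} \IZ^k \to 1$ with $L := \pi_1(M')$ a $3$-dimensional Poincar\'e duality group. Writing $\phi_1 := \phi|_G$, $\phi_2 := \phi|_{\IZ^k}$, $T := L \cap \IZ^k = \ker \phi_2$, and $K := \pi_G(L) = \phi_1^{-1}(\im \phi_2) \subseteq G$, the centrality of $\IZ^k$ in $G \times \IZ^k$ makes $T$ central in $L$, normality of $L$ makes $K$ normal in $G$, and surjectivity of $\phi$ gives $G/K \cong \IZ^k/\im \phi_2$.

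The structural step is to show $L \cong T \times K$. The central extension $1 \to T \to L \to K \to 1$ splits: since $\im \phi_2$ is free abelian, the surjection $\IZ^k \to \im \phi_2$ admits a section, and composing it with $-\phi_1|_K$ (which lands in $\im \phi_2$ by the definition of $K$) produces a homomorphism $f \colon K \to \IZ^k$ with $\phi_2 \circ f = -\phi_1|_K$, so that $k \mapsto (k,f(k))$ splits the extension. Centrality of $T$ then promotes the semidirect product to a direct product. Because $L$ is PD$3$ and $T$ is free abelian of rank $t$, the complementary factor $K$ is a PD$(3-t)$ group, and the argument splits into cases $t \in \{0,1,2,3\}$.

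In the case $t = 0$, $K \cong L$ is a finite-index PD$3$ subgroup of $G$ (since $\phi_2$ is then injective, $\IZ^k/\im \phi_2$ is finite) that is the fundamental group of the aspherical closed $3$-manifold $M'$; combining Theorem~\ref{the:subgroup_suffices} with Theorem~\ref{the:manifold:realization_is_enough} gives the Cannon Conjecture for $G$. In the case $t = 1$, $K$ is PD$2$ hence a closed surface group of genus $\ge 2$ (no $\IZ^2$ in the hyperbolic $G$), normal in $G$ with $G/K$ of rank one; a finite-index subgroup $G_1 \subseteq G$ splits as $K \rtimes \IZ$, the fundamental group of a mapping torus---closed and aspherical---and the same two theorems close the argument. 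The step I expect to be the main obstacle is ruling out $t \in \{2,3\}$. For $t = 3$, $K$ is trivial and $L \subseteq \IZ^k$, so $(G \times \IZ^k)/L \cong G \times (\IZ^k/L)$ must equal $\IZ^k$, forcing $G$ to be abelian, contradicting hyperbolicity. For $t = 2$, $K \cong \IZ$ is a normal infinite cyclic subgroup of $G$; passing to the finite-index kernel of the conjugation action $G \to \operatorname{Aut}(\IZ) = \{\pm 1\}$ places $K$ in the centre of a hyperbolic group $G_0$ with $\partial G_0 \cong S^2$, and Theorem~\ref{the:center} then forces $G_0$ to be the fundamental group of a closed Seifert $3$-manifold---which contains $\IZ^2$ and so cannot be hyperbolic. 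Care is needed to verify that hyperbolicity and $\partial G_0 \cong S^2$ both survive the passage to the finite-index subgroup $G_0$.
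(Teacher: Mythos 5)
Your argument is correct and takes a genuinely different route from the paper's, centered on the direct splitting $L \cong T \times K_0$ (in the paper's notation, $K \cong K' \times \pr_G(K)$, a factorization the paper never establishes). Your key observation — that the central extension $1 \to T \to L \to K_0 \to 1$ splits because $\im\phi_2$ is free abelian, so that one may compose a section of $\phi_2 \colon \IZ^k \twoheadrightarrow \im\phi_2$ with $-\phi_1|_{K_0}$ — is a real simplification. The paper instead assembles a $3\times 3$ commutative diagram, applies Bowditch's Theorem~\ref{the:center} to $L = \ker\phi$ itself to get a Seifert structure, and then carries out a fairly lengthy finite-cover analysis inside that Seifert group. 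Your route buys a uniform structural statement $L \cong \IZ^{t} \times K_0$ with $K_0 \le G$, after which the cases $t \in \{2,3\}$ (the paper's virtually-$\IZ^3$ branch) die immediately from hyperbolicity, and $t=1$ (the paper's Seifert branch) reduces to the observation that the complementary PD$2$ factor is a surface group. The price is that you invoke, implicitly, the fact that a direct factor of a PD$n$ group is itself a PD group of complementary dimension; this is correct (Bieri) but is a nontrivial input that deserves a citation, whereas the paper avoids it by working inside the Seifert structure directly.

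Two small gaps worth fixing. First, you write the exact sequence $1 \to L \to G \times \IZ^k \xrightarrow{\phi} \IZ^k \to 1$, which silently assumes $\phi$ is surjective, i.e.\ that the homotopy fiber $M'$ is connected. The paper allows $M'$ disconnected and first reduces to the surjective case by lifting $p$ along a suitable finite cover $T^k \to T^k$ (replacing $M'$ by a finite cover); either build in that reduction or state the connectedness convention. Second, in the $t=2$ case, passing to $G_0 := \ker\bigl(G \to \operatorname{Aut}(\IZ)\bigr)$ is unnecessary: Theorem~\ref{the:center} only requires a normal (not central) infinite cyclic subgroup, so it applies to $G$ directly, forcing $G$ to be a closed aspherical Seifert $3$-manifold group and hence to contain $\IZ^2$. (More elementarily still, a non-elementary hyperbolic group has no infinite cyclic normal subgroup, since every infinite normal subgroup of such a group is non-elementary.) The care you flag about preserving hyperbolicity and $\partial G_0 \cong S^2$ under passage to finite index is legitimate but can simply be sidestepped.
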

\begin{proof}~\eqref{lem:comparing_T_upper_k:(1)}
  $\implies $~\eqref{lem:comparing_T_upper_k:(2)} This follows from
  Theorem~\ref{lem:comparing_general_N}.
  \\[1mm]~\eqref{lem:comparing_T_upper_k:(2)}
  $\implies $~\eqref{lem:comparing_T_upper_k:(3)} This is obvious.
  \\[1mm]~\eqref{lem:comparing_T_upper_k:(3)}
  $\implies $~\eqref{lem:comparing_T_upper_k:(1)} First we explain
  that we can assume that $\pi_1(p) \colon \pi_1(M) \to \pi_1(T^k)$ is
  surjective.  Since $M'$ is compact and has only finitely many path
  components, we conclude from the exact long homotopy sequence that the
  image of $\pi_1(p) \colon \pi_1(M) \to \pi_1(T^k)$ has finite index.
  Let $q \colon T^k \to T^k$ be a finite covering such that the image
  of $\pi_1(p)$ and $\pi_1(q)$ agree.  Then we can lift
  $p \colon M \to T^k$ to a map $p'\colon M \to T^k$ such that
  $q \circ p' = p$.  One easily checks that 
  that $\pi_1(p')$ is surjective and the homotopy fiber of $p'$ fiber is a finite covering of
  $M'$ and in particular a closed $3$-manifold. Hence we assume without loss
  of generality that $\pi_1(p)$ is surjective, otherwise replace $p$
  by $p'$.

  Let $K$ be the kernel of the map
  $\pi_1(p) \colon \pi_1(M) \cong G \times \IZ^k \to \pi_1(T^k) \cong
  \IZ^k$. Since $M$ and  $T^k$ are aspherical, the homotopy fiber of $p$ is homotopy equivalent to $B\!K$.
  Hence $K$ is the fundamental group of the closed aspherical $3$-manifold $M'$.
  Define $K' := K \cap \{1\} \times \IZ^k$. This is a normal
  subgroup of both $K$ and $\IZ^k$ if we identify
  $ \{1\} \times \IZ^k = \IZ^k$.

  We begin with the case, where $K'$ is trivial. Then the projection
  $\pr \colon G \times \IZ^k \to G$ induces an isomorphism
  $K \xrightarrow{\cong} L$ for $L = \pr(K) \subseteq G$.  
  We  conclude from Theorem~\ref{the:subgroup_suffices} that $G$ is the
  fundamental group of a closed
  $3$-manifold. Theorem~\ref{the:manifold:realization_is_enough}
  implies that $G$ is the fundamental group of a closed hyperbolic
  $3$-manifold.

  Next we consider the case where $K'$  is non-trivial.  Consider
  the following commutative diagram
  \[
    \xymatrix{ & \{1\} \ar[d] & \{1\} \ar[d] & \{1\} \ar[d] &
      \\
      \{1\} \ar[r] & K' \ar[r] \ar[d] & K \ar[r] \ar[d] & K/K' \ar[d]
      \ar[r] & \{1\}
      \\
      \{1\} \ar[r] & \IZ^k \ar[r] \ar[d] & G \times \IZ^k \ar[r]
      \ar[d] & G \ar[d] \ar[r] & \{1\}
      \\
      \{1\} \ar[r] & \IZ^k/K' \ar[r] \ar[d] & \IZ^k \ar[r] \ar[d] & Q
      \ar[r] \ar[d] & \{1\}
      \\
      & \{1\} & \{1\} & \{1\} & }
  \]
  where the upper and the middle rows and the left and the middle columns are the obvious
  exact sequences, and the map $\IZ^k/K' \to \IZ^k$ is the map making the diagram commutative.
  The group $Q$ is defined to be the cokernel of the map $\IZ^k/K' \to \IZ^k$, and all other arrows
  are uniquely determined by the property that the diagram commutes.  The so called nine-lemma, 
  which can be proved by an easy diagram
  chase, shows that all rows and columns are exact.

  Since $K' \subseteq \IZ^k \times \{1\} \subseteq \IZ^k \times G$,
  we have $K' \subseteq \cent(G \times \IZ^k)$. Since $K' \subseteq K \subseteq G \times \IZ^k$, 
  we conclude  $K' \subseteq \cent(K)$. Since $K$ is torsionfree and $K'$ is non-trivial, the 
  center of $K$ contains a copy of $\IZ$. We conclude from Theorem~\ref{the:center} that there is a closed aspherical
   Seifert $3$-manifold $S$ such that $K =\pi_1(N)$.  There exists a finite covering
  $\overline{S} \to S$ such that $\overline{S}$ is orientable, there is a principal
  $S^1$-fiber bundle $S^1 \to \overline{S} \to F_g$ for a closed orientable surface of
  genus $g \ge 1$, see~\cite[page~436 and Theorem~2.3]{Scott(1983)}. We obtain a short
  exact sequence $\{1\} \to \pi_1(S^1) \to \pi_1(\overline{S}) \to \pi_1(F_g) \to \{1\}$.
  The center of $\pi_1(\overline{S})$ contains the image of
  $\pi_1(S^1) \to \pi_1(\overline{S^1})$ since  we are considering a principal
  $S^1$-fiber bundle $S^1 \to \overline{S} \to F_g$ and the fiber transport is by self-homotopy equivalences
  of $S^1$ which are all homotopic to the identity. The center cannot be larger if $g \ge 2$ since
  $\cent(\pi_1(F_g))$ is trivial for $g \ge 2$.  If the center is larger and $g = 1$, the
  extension has to be trivial, after possibly passing to a finite covering of
  $\overline{S}$.  Hence we can arrange that there is a subgroup
  $\overline{K} \subseteq K$ of finite index such that $\cent(\overline{K}) \cong \IZ$ and
  $\overline{K}/\cent(\overline{K}) \cong \pi_1(F_g)$ holds for some $g \ge 1$, or we have
  $\overline{K} \cong \IZ^3$;  just take $\overline{K} = \pi_1(\overline{S})$.

  Next we show that $\cent(\overline{K})$ must be infinite cyclic. If
  $\cent(\overline{K})$ is not infinite cyclic, then $\overline{K}$ has to be $\IZ^3$.  We
  conclude that $K$ and hence also $K/K'$ are virtually finitely generated abelian.  Since
  $Q$ is abelian, we have the exact sequence $1 \to K/K' \to G \to Q \to 1$ and $G$ has
  cohomological dimension $3$, the group $G$ cannot be hyperbolic, a contradiction.  Hence
  $\cent(\overline{K})$ must be infinite cyclic and and
  $\overline{K}/\cent(\overline{K}) \cong \pi_1(F_g)$ for some $g \ge 1$.

  We have $\{0\} \not= K' \subseteq \cent(K)$ and
  $\cent(K) \cap \overline{K} \subseteq \cent (\overline{K}) \cong \IZ$.  Since $K'$ is
  torsion free and $[K:\overline{K}]$ is finite, $\cent(K)$ is a non-trivial torsion-free
  virtually cyclic group and hence $\cent(K)$ is infinite cyclic. Since $\cent(K)/K'$ is a
  finite subgroup of $K/K'$ and $K/K'$ is isomorphic to a subgroup of the torsion free
  group $G$, we have $K' = \cent(K)$.  The group
  $\overline{K}/(\overline{K} \cap \cent(K))$ is a subgroup of $K/K' = K/\cent(K)$ of
  finite index and admits an epimorphism onto
  $\overline{K}/\cent(\overline{K}) \cong \pi_1(F_g)$ whose kernel
  $\cent(\overline{K})/(\overline{K} \cap \cent(K))$ is finite. Since
  $\overline{K}/(\overline{K} \cap \cent(K))$ is isomorphic to a subgroup of the
  torsion free group $G$, this kernel is trivial and hence
  $\overline{K}/(\overline{K} \cap \cent(K)) \cong \pi_1(F_g)$.
  
  Since $K'$ is infinite cyclic, $Q$ contains a copy of $\IZ$ of finite index. Hence we
  can find a subgroup $G'$ of $G$ of finite index together with a short exact sequence
  $\{1\} \to K/K' \to G' \to \IZ \to \{1\}$. So there exists an automorphism
  $\phi \colon K/K' \to K/K'$ such that $G'$ is isomorphic to the semi-direct product
  $K/K' \rtimes_{\phi} \IZ$.  If we put $L := \overline{K}/(\overline{K} \cap \cent(K))$,
  then $L \cong \pi_1(F_g)$ and $L$ is a subgroup of finite index of the finitely generated 
  group $K'/K$.  Then $L' = \bigcap_{n \in \IZ} \phi^n(L)$ is a subgroup of $K/K'$ of
  finite index again which satisfies $L' \subseteq L$ and  $\phi(L') = L'$ 
  and for which there is an isomorphism
  $u \colon L' \xrightarrow{\cong} \pi_1(F_{g'})$ for some $g' \ge 1$. Let
  $\phi' \colon L' \to L'$ be the automorphism induced by $\phi$.  Then
  $G'' := L' \rtimes_{\phi'} \IZ$ is isomorphic to a subgroup of $G$ of finite index in
  $G$.  Choose a homeomorphism $h' \colon F_{g'} \to F_{g'}$ satisfying
  $\pi_1(h') = u \circ \phi' \circ u^{-1}$.  The mapping torus
  $T_{h'}$ is a closed aspherical $3$-manifold with $\pi_1(T_{h'}) \cong G''$.
  Theorem~\ref{the:subgroup_suffices} shows that $G$ is the fundamental group of a closed
  $3$-manifold. Theorem~\ref{the:manifold:realization_is_enough} implies that $G$ is the
  fundamental group of a closed hyperbolic $3$-manifold.
\end{proof}

\begin{remark}[manifold approximate fibration]\label{rem:MAF} 
  Some evidence for Lemma~\ref{lem:comparing_T_upper_k}  comes from the conclusion
  of~\cite[Theorem~1.8]{Farrell-Lueck-Steimle(2018)} that one can find for any epimorphism
  $\alpha \colon \pi_1(M) \to \pi_1(T^k)$ at least a 
  manifold approximate fibration $p \colon M \to T^k$ such that $\pi_1(p) = \alpha$.
\end{remark}


\section{The existence of a normal map of degree one}
\label{sec:The_existence_of_a_normal_map_of_degree_one}

We call a connected finite Poincar\'e complex $X$ \emph{oriented} if we have chosen a generator
$[X]$ of the infinite cyclic group $H_n^{\pi_1(X)}(\widetilde{X};\IZ^{w_1(X)})$. 
Notice that we do allow non-trivial $w_1(X)$. In this section we show

\begin{theorem}[Existence of a normal map]\label{the:existence_of_a_normal_map}
  Let $X$ be a connected  finite $3$-dimensional Poincar\'e complex.  Then there
exist an integer $a \ge 0$ and a vector bundle $\xi$ over $X$ and a normal map of degree
  one
  \[
    \xymatrix{TM \oplus \underline{\IR^a} \ar[r]^-{\overline{f}} \ar[d] & \xi
      \ar[d]
      \\
      M \ar[r]^-f & X }
  \]
\end{theorem}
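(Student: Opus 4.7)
The plan is a Pontryagin--Thom construction. The main content is to produce a stable vector bundle reduction of the Spivak normal fibration of $X$; the normal map then falls out of transversality. Write $\nu_X \colon X \to BG$ for the classifying map of the Spivak fibration (in the nonorientable case; the notation is the same for $BSG$ if $X$ is oriented), and consider the question of lifting $\nu_X$ along the natural map $BO \to BG$.

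\textbf{Step 1 (vector bundle reduction).} By obstruction theory applied to the fibration sequence $G/O \to BO \to BG$, the obstructions to a stable lift $\xi \colon X \to BO$ of $\nu_X$ lie in $H^{i+1}(X;\pi_i(G/O))$ for $i \geq 0$. One has $\pi_0(G/O) = \pi_1(G/O) = 0$ and $\pi_2(G/O) \cong \IZ/2$. Since $\dim X = 3$ the only potentially nonzero obstruction therefore lies in $H^3(X;\IZ/2)$. The Spivak fibration carries well-defined Stiefel--Whitney classes (via its Thom class mod $2$), and the Wu formula for Poincar\'e complexes forces $w_i(\nu_X) \cup x = \Sq^i(x)$ for $x \in H^{3-i}(X;\IZ/2)$. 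Rewriting the universal obstruction class in these terms and using Poincar\'e duality shows it vanishes for any $3$-dimensional Poincar\'e complex. So a reduction $\xi$ exists; stabilize so that $\xi$ has rank $3+a$ for some $a \ge 0$.

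\textbf{Step 2 (Pontryagin--Thom).} Atiyah--Spivak duality for $X$ supplies a degree-one map $c \colon S^{3+a} \to \Th(\nu_X)$, and the reduction lifts $c$ (uniquely up to homotopy) to a degree-one map $\widetilde{c}\colon S^{3+a} \to \Th(\xi)$. Perturb $\widetilde{c}$ to be smooth and transverse to the zero section $X \subset \Th(\xi)$. Then $M := \widetilde{c}^{-1}(X)$ is a closed smooth $3$-manifold, $f := \widetilde{c}|_M \colon M \to X$ has degree one, and transversality yields a bundle isomorphism $\overline{f}\colon TM \oplus \underline{\IR^a} \xrightarrow{\cong} f^*\xi$ (the normal bundle to $M$ in $S^{3+a}$ is $f^*\xi$ and the tangent bundle of the sphere is stably trivial). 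This is the required normal map.

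\textbf{Main obstacle.} The delicate point is Step 1: establishing that the single surviving obstruction class in $H^3(X;\IZ/2)$ vanishes. This is a feature specific to dimension $3$ and relies essentially on the Wu formula together with Poincar\'e duality on $X$ (no analogous statement holds for general $4$-dimensional Poincar\'e complexes). Once that is in hand, Step 2 is the familiar Pontryagin--Thom / transversality construction from Browder--Novikov surgery theory and introduces no further difficulties.
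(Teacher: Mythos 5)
Your overall strategy is genuinely different from the paper's. You first try to establish abstractly that the Spivak normal fibration of $X$ admits a vector bundle reduction (by obstruction theory applied to $G/O \to BO \to BG$) and then invoke Pontryagin--Thom and transversality. The paper instead produces the degree-one map $f\colon M \to X$ directly via oriented bordism, quoting Hausmann--Vogel for the fact that the edge homomorphism $\Omega_3(X) \to H_3(X;\IZ)$ is onto, and only afterwards builds the bundle $\xi$ by hand, using that stable vector bundles over a $3$-complex are classified by $(w_1,w_2)$ together with the Wu relation $w_2(TM) = w_1(TM)^2$ on a closed $3$-manifold. The bordism route never needs to identify the obstruction to reducibility; it proves reducibility as a byproduct.

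Your Step 2 is standard and fine (modulo routine care with $w_1$-twisting and Atiyah duality in the nonorientable case). The problem is Step 1. You correctly compute that $\pi_0(G/O)=\pi_1(G/O)=0$ and $\pi_2(G/O)\cong\IZ/2$, so the only possible obstruction sits in $H^3(X;\IZ/2)$ and is the pullback under $\nu_X$ of a universal $k$-invariant $k \in H^3(BG;\IZ/2)$. But the sentence ``rewriting the universal obstruction class in these terms and using Poincar\'e duality shows it vanishes'' is precisely the content of the theorem, and you never identify $k$ nor compute $\nu_X^*(k)$. This is a genuine gap: to finish by this route you would have to determine the first Moore--Postnikov $k$-invariant of $BO \to BG$, express it through characteristic classes, pull it back by $\nu_X$, and then invoke Wu and Poincar\'e duality on $X$ to see it dies --- none of which is done. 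There is also a small but real error in the Wu formula as you state it: the classes satisfying $v_i \cup x = \Sq^i(x)$ for $x\in H^{3-i}(X;\IZ/2)$ are the Wu classes $v_i(X)$, not the Stiefel--Whitney classes of the Spivak (normal) fibration; the latter satisfy $w(\nu_X) = \Sq(v(X))^{-1}$. Using $w_i(\nu_X)$ in place of $v_i(X)$ in any actual computation would give the wrong answer. I would suggest either supplying the missing obstruction calculation or replacing Step 1 with the bordism argument as in the paper (and in Hausmann--Vogel), which is shorter and does not require knowing $k$.
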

\begin{proof}
  Any element $c \in H^k(BO;\IZ/2)$ determines up to homotopy a unique  map
  $\widehat{c} \colon BSG \to K(\IZ/2,k)$. It is characterized by the property that
  $c = H^k(\widehat{c};\IZ/2)(\iota_k)$ for the canonical element
  $\iota_k \in H^k(K(\IZ/2,k);\IZ/2)$ which corresponds to $\id_{\IZ/2}$ under the
  isomorphism
  \begin{multline*}
    H^k(K(\IZ/2,k);\IZ/2) \cong \hom_{\IZ}(H_k(K(\IZ/2,k);\IZ),\IZ/2)
    \\
    \cong \hom_{\IZ}(\pi_k(K(\IZ/2,k)),\IZ/2) \cong \hom_{\IZ}(\IZ/2,\IZ/2).
  \end{multline*}
  Next, we claim that the product of the maps given by the universal first and second Stiefel-Whitney
  classes $w_1 \in H^1(BO;\IZ/2)$ and $w_2 \in H^2(BO;\IZ/2)$
  \begin{equation}
    \widehat{w_1} \times \widehat{w_2} \colon BO  \to   K(\IZ/2,1) \times K(\IZ/2,2) 
   \label{widehat(w_1)_times_widehat(w_2)}
  \end{equation}
  is $4$-connected. Since $BO$ is connected, $\pi_1(BO) \cong \pi_2(BSO) = \IZ/2$ and
  $\pi_3(BSO) = 0$, it suffices to show that
  $\pi_k(\widehat{w_k}) \colon \pi_k(BO) \to \pi_k(K(\IZ/2,k))$ is non-trivial for
  $k =1,2$.  This is easily proved using the fact the Hopf fibration $S^1 \to S^3 \to S^2$
  has non-trivial second Stiefel-Whitney class.  Hence for any $3$-dimensional complex $X$
  stable vector bundles over $X$ are classified by $w_1$ and $w_2$. The map
  induced by composition with the map~\eqref{widehat(w_1)_times_widehat(w_2)}
  \[ [X,BSO] \to [X,K(\IZ/2,1) \times K(\IZ/2,2)] = H^1(X;\IZ/2) \times H^2(X;\IZ/2),
  \]
  is bijective.  For a vector bundle $\xi$ with classifying map $f_{\xi}$ the
  class $[f_{\xi}]$ goes to $(w_1(\xi),w_2(\xi))$.

  We conclude from~\cite[page~44]{Hausmann-Vogel(1993)} that there is a closed manifold
  $M$ together with a map $f \colon M \to X$ such that $w_1(M) = f^*w_1(X)$ and the
  induced map
  $H_3^{\pi_1(M)}(\widetilde{M};\IZ^{w_1(M)}) \xrightarrow{\cong}
  H_3^{\pi_1(X)}(\widetilde{X};\IZ^{w_1(X)})$ is an isomorphism of infinite cyclic groups.
  The proof in the general case is a variation of the one for trivial $w_1(X)$ which we sketch
  next. The Atiyah-Hirzebruch spectral sequence applied to the homology theory
  $\Omega_*$ given by oriented bordism yields an epimorphism
  \[
    \Omega_3(X) \xrightarrow{\cong} H_3(X;\IZ), \quad [f \colon M \to BG] \mapsto
    f_*([M])
  \]
  since the projection $X \to \pt$ induces an epimorphism $\Omega_3(X) \to \Omega_3(\pt)$
  and there is a map of degree one from $X$ to $S^3$.
  We can choose  the fundamental class $[M] \in H_3(M;\IZ^{w_1(M)})$ so that it is mapped to
  $[X] \in H_3(M;\IZ^{w_1(X)})$ under the isomorphism
  $H_3^{\pi_1(M)}(\widetilde{M};\IZ^{w_1(M)}) \xrightarrow{\cong}
  H^{\pi_1(X)}_3(\widetilde{X};\IZ^{w_1(X)})$.

  Choose a vector bundle $\xi$ over $X$ with $w_1(\xi)= w_1(X)$ and
  $w_2(\xi) = w_1(X) \cup w_1(X)$.  Its pull back $f^*\xi$ satisfies
  $w_2(f^*\xi) = w_1(M) \cup w_1(M)$ and $w_1(f^*\xi) = w_1(M)$. The Wu formula, see for
  instance~\cite[Theorem~11.14 on page~132]{Milnor-Stasheff(1974)}, implies
  $w_2(TM) = w_1(TM) \cup w_1(TM)$ and hence $w_2(f^*\xi) = w_2(TM)$ and
  $w_1(f^*\xi) = w_1(TM)$.  Therefore $TM$ and $f^*\xi$ are stably isomorphic. Hence we
  can cover $f \colon M \to X$ by a bundle map
  $\overline{f} \colon TM \oplus \underline{\IR^a} \to \xi$ after possibly replacing $\xi$
  by $\xi \oplus \underline{\IR^b}$.
\end{proof}

Notice that the sphere bundle of $\xi$ is necessarily the Spivak normal bundle of
$X$. Hence we see that the Spivak normal fibration of $X$ has a vector bundle reduction.

Next we want to figure out the simple surgery obstruction
\[
  \sigma^s(f,\overline{f}) \in L_3^s(\IZ[\pi_1(X)],w_1(X))
\]
of the normal one map of degree one appearing in
Theorem~\ref{the:existence_of_a_normal_map}.  The goal is to find one $(f,\overline{f})$
such that $\sigma^s(f,\overline{f})$ vanishes. Notice that the definition of the surgery
obstruction makes sense in all dimensions, in particular also in dimension 3. For this
purpose we will need the Full  Farrell-Jones Conjecture.


\section{Short review of Farrell-Jones groups}
\label{sec:Short_review_of_Farrell-Jones_groups}

Recall that a group $G$ is called a Farrell-Jones group if it satisfies the Full
Farrell-Jones Conjecture, which means that it satisfies both the $K$-theoretic and the
$L$-theoretic Farrell-Jones Conjecture with coefficients in additive categories and with
finite wreath products. A detailed exposition on the Farrell-Jones Conjecture will be
given in~\cite{Lueck(2020book)}.

The reader does not need to know any details about the Full Farrell-Jones Conjecture since
this paper is written so that FJ can be used as a black box. We will mention the
consequences which we need in this paper when they appear.  For now, we record the following
important consequences for a torsion free Farrell-Jones group $G$.

\begin{itemize}

\item The projective class group $\widetilde{K}_0(\IZ G)$ vanishes. This implies that any
  finitely presented $n$-dimensional Poincar\'e duality group has a finite $n$-dimensional
  model for $BG$;

\item The Whitehead group $\Wh(G)$ vanishes. Hence any homotopy equivalence of finite
  $CW$-complexes with $G$ as fundamental group is a simple homotopy equivalence and every
  $h$-cobordism of dimension $\ge 6$ with $G$ as fundamental group is trivial;

\item The negative $K$-groups $K_n(\IZ G)$ for $n \le -1$ all vanish. Hence  the decorations
$L^{\epsilon}_n(\IZ G)$ in the $L$-groups do not matter;

\item The $L$-theoretic assembly map, see~\eqref{asmb},
\[
\asmb^{\epsilon}_n(G,w) \colon   H_n^G(EG;\bfL^{\epsilon}_{\IZ,w}) 
\to H_n^G(\pt;\bfL_{\IZ,w}^{\epsilon}) = L_n^{\epsilon} (\IZ G,w)
\]
is an isomorphism for $n \in \IZ$ and all decorations $\epsilon$;

\item The Borel Conjecture holds for closed aspherical manifolds of dimension $\ge 5$
  whose fundamental group is $G$.

\end{itemize}

The reader may appreciate the following status report.

\begin{theorem}[The class $\calfj$]
\label{the:status_of_the_Full_Farrell-Jones_Conjecture}
Let class $\calfj$ of Farrell-Jones groups has the following properties.

\begin{enumerate}

\item\label{the:status_of_the_Full_Farrell-Jones_Conjecture:Classes_of_groups}
The following classes of  groups belong to $\calfj$:
\begin{enumerate}

\item\label{the:status_of_the_Full_Farrell-Jones_Conjecture:Classes_of_groups:hyperbolic_groups}
Hyperbolic groups;

\item\label{the:status_of_the_Full_Farrell-Jones_Conjecture:Classes_of_groups:CAT(0)-groups}
Finite dimensional $\CAT(0)$-groups;

\item\label{the:status_of_the_Full_Farrell-Jones_Conjecture:Classes_of_groups:solvable}
Virtually solvable groups;

\item\label{the:status_of_the_Full_Farrell-Jones_Conjecture:Classes_of_groups:lattices}
  (Not necessarily cocompact) lattices in second countable locally compact Hausdorff
  groups with finitely many path components;

\item\label{the:status_of_the_Full_Farrell-Jones_Conjecture:Classes_of_groups:pi_low_dimensional}
  Fundamental groups of (not necessarily compact) connected manifolds (possibly with
  boundary) of dimension $\le 3$;

\item\label{the:status_of_the_Full_Farrell-Jones_Conjecture:Classes_of_groups:GL}
The groups $GL_n(\IQ)$ and $GL_n(F(t))$ for $F(t)$ the function field over a finite field $F$;

\item\label{the:status_of_the_Full_Farrell-Jones_Conjecture:Classes_of_groups:S-arthmetic}
$S$-arithmetic groups;

\item\label{the:status_of_the_Full_Farrell-Jones_Conjecture:Classes_of_groups:mappping_class_groups}
mapping class groups;

\end{enumerate}

\item\label{the:status_of_the_Full_Farrell-Jones_Conjecture:inheritance}
The class $\calfj$ has the following inheritance properties:

\begin{enumerate}

\item \emph{Passing to subgroups}\\
\label{the:status_of_the_Full_Farrell-Jones_Conjecture:inheritance:passing_to_subgroups}
Let $H \subseteq G$ be an inclusion of groups. 
If $G$ belongs to $\calfj$, then $H$ belongs to $\calfj$;

\item \emph{Passing to finite direct products}\\
\label{the:status_of_the_Full_Farrell-Jones_Conjecture:inheritance:Passing_to_finite_direct_products}
If the groups $G_0$ and $G_1$ belong to $\calfj$, then also $G_0 \times G_1$ belongs to $\calfj$;

\item \emph{Group extensions}\\
\label{the:status_of_the_Full_Farrell-Jones_Conjecture:inheritance:group_extensions}
Let $ 1 \to K \to G \to Q \to 1$ be an extension of groups. Suppose that for any cyclic subgroup
$C \subseteq Q$ the group $p^{-1}(C)$ belongs to $\calfj$ 
and that the group $Q$ belongs to $\calfj$.

Then $G$ belongs to $\calfj$;

\item \emph{Directed colimits}\\
\label{the:status_of_the_Full_Farrell-Jones_Conjecture:inheritance:directed_colimits}
Let $\{G_i \mid i \in I\}$ be a direct system of groups indexed by the directed set $I$
(with arbitrary structure maps). Suppose that for each $i \in I$ the group $G_i$ belongs to $\calfj$.

Then the colimit $\colim_{i \in I} G_i$ belongs to $\calfj$;

\item \emph{Passing to finite free products}\\
\label{the:status_of_the_Full_Farrell-Jones_Conjecture:inheritance:Passing_to_finite_free_products}
If  the groups $G_0$ and $G_1$ belong to $\calfj$,  then $G_0 \ast G_1 $ belongs to $\calfj$;

\item \emph{Passing to overgroups of finite index}\\
\label{the:status_of_the_Full_Farrell-Jones_Conjecture:Passing_to_over_groups_of_finite_index}
Let $G$ be an overgroup of $H$ with finite index $[G:H]$.
If $H$ belongs to $\calfj$, then $G$ belongs to $\calfj$;

\end{enumerate}

\end{enumerate}

\end{theorem}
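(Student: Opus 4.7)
The statement is a compilation theorem — a status report recording which classes of groups are currently known to satisfy the Full Farrell--Jones Conjecture, together with the inheritance properties of that class. Accordingly, I would not attempt a unified proof but instead a proof-by-reference: for each of the many sub-items, cite the paper in which that specific case has been established. The comprehensive treatment in the forthcoming monograph \cite{Lueck(2020book)} will serve as the central reference, but I would also give the original sources where possible.

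For assertion~\eqref{the:status_of_the_Full_Farrell-Jones_Conjecture:Classes_of_groups}, the plan is to cite, item by item, the foundational results: hyperbolic groups (Bartels--L\"uck--Reich for $K$-theory and Bartels--L\"uck for $L$-theory, with the extension to finite wreath products requiring a little extra care but already folded into the literature); finite-dimensional $\CAT(0)$-groups (Bartels--L\"uck, using the geodesic flow argument); virtually solvable groups (Wegner, after Bartels--Farrell--L\"uck for virtually polycyclic groups); lattices in locally compact second countable groups (Bartels--Farrell--L\"uck, extended by K\"ammeyer--L\"uck--R\"uping to the non-cocompact and not-necessarily-connected setting); fundamental groups of $3$-manifolds (Roushon, combined with the geometrization theorem which reduces the statement to the pieces of the JSJ-decomposition, each of which is covered by the CAT(0), hyperbolic or solvable cases together with the extension inheritance property); $GL_n(\IQ)$, $GL_n(F(t))$ and $S$-arithmetic groups (R\"uping); and mapping class groups (Bartels--Bestvina).

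For assertion~\eqref{the:status_of_the_Full_Farrell-Jones_Conjecture:inheritance}, the strategy is to cite the formal inheritance results, most of which are due to Bartels--Echterhoff--L\"uck or are immediate consequences of the Transitivity Principle. Concretely: passing to subgroups follows because the conjecture with coefficients in additive categories and with finite wreath products is designed to be hereditary in this way; passing to finite direct products and to group extensions with virtually cyclic quotient slices is proven by the same framework; directed colimits use the continuity of equivariant homology theories together with the corresponding continuity of the $K$- and $L$-theory spectra. Finite free products reduce to the extension statement via the standard Bass--Serre tree action making the free product an amalgam, and passing to overgroups of finite index is a special case of the extension inheritance since $H\subseteq G$ with $[G:H]<\infty$ fits into an extension $1\to\bigcap_g gHg^{-1}\to G\to F\to 1$ with $F$ finite.

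The main ``obstacle'' here is not mathematical but bibliographic: there is no single place where all these results are assembled with a uniform statement of the conjecture (the same version: with coefficients in additive categories and with finite wreath products). The genuine care one needs to take is in verifying that each cited theorem applies to this strong version of the conjecture, rather than to a weaker variant, and in particular that the finite-wreath-product refinement is available in each case — which for some items (e.g.\ mapping class groups or $S$-arithmetic groups) requires checking that the published proofs extend, as the authors of those papers have confirmed. Beyond this, nothing new is needed, and the proof is simply a bookkeeping exercise.
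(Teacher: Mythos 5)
Your proposal is correct and takes essentially the same approach as the paper: the paper's own proof consists solely of a list of citations to the relevant papers (Bartels--Bestvina, Bartels--Echterhoff--L\"uck, Bartels--Farrell--L\"uck, Bartels--L\"uck, Bartels--L\"uck--Reich, Bartels--L\"uck--Reich--R\"uping, Kammeyer--L\"uck--R\"uping, R\"uping, Wegner), exactly the bibliographic bookkeeping you describe. Your additional remarks correctly identifying the subtle point (that the cited sources must be checked to give the Full Farrell--Jones Conjecture with coefficients and finite wreath products, not a weaker variant) match the spirit of the paper's remark that the detailed exposition is deferred to the forthcoming monograph \cite{Lueck(2020book)}.
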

\begin{proof}
See~\cite{Bartels-Bestvina(2016), Bartels-Echterhoff-Lueck(2008colim), Bartels-Farrell-Lueck(2014), Bartels-Lueck(2012annals),
Bartels-Lueck-Reich(2008hyper),  Bartels-Lueck-Reich-Rueping(2014),  Kammeyer-Lueck-Rueping(2016), 
Rueping(2016_S-arithmetic), Wegner(2012),Wegner(2015)}.
\end{proof}


\section{The total surgery obstruction}
\label{sec:The_total_surgery_obstruction}

The results of this section are inspired and motivated by Ranicki's total surgery
obstruction, see for instance~\cite{Kuehl-Macko-Mole(2013), Ranicki(1979),
  Ranicki(1992)}. Since we consider only aspherical Poincar\'e complexes whose fundamental
groups are Farrell-Jones groups, the exposition simplifies drastically and we get some
valuable additional information. Moreover, we get a version of Quinn's resolution obstruction
which does not require the structure of an $\ENR$ homology manifold on the relevant
Poincar\'e complexes. Then the total surgery obstruction and hence Quinn's resolution
obstruction are already determined by the symmetric signature of the finite Poincar\'e
complex.

The main result of this section will be

\begin{theorem}\label{the:Poincare_duality_groups_in_dimension_three} Let $G$ be a
  finitely presented $3$-dimensional Poincar\'e duality group which is a Farrell-Jones
  group.  

Let $X$ be a finite $3$-dimensional CW complex modeling $BG$. The following
  statements are equivalent:

\begin{enumerate}

\item\label{the:Poincare_duality_groups_in_dimension_three:one} 
There exists a closed aspherical topological manifold $N_0$ with Farrell-Jones fundamental group such that $BG \times N_0$ 
is homotopy  equivalent to a closed topological manifold;

\item\label{the:Poincare_duality_groups_in_dimension_three:all} Let $N$ be any closed
  smooth manifold, closed PL-manifold, or closed topological manifold respectively of
  dimension $\ge 2$.  Then there is exists a normal map of degree one for some vector
  bundle $\xi$ over $X$
\[
\xymatrix{TM \oplus \underline{\IR^a} \ar[r]^-{\overline{f}} \ar[d]
&
(\xi \times TN) \oplus \underline{\IR^b} \ar[d]
\\
M \ar[r]^-f
&
X \times N
}
\]
such that $M$ is a smooth manifold, PL-manifold, or topological manifold respectively and
$f$ is a simple homotopy equivalence.

\end{enumerate}
\end{theorem}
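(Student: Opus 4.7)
The equivalence will be obtained by exhibiting both statements as equivalent to the vanishing of a single invariant of $X$ alone, namely the generalized Quinn resolution obstruction developed in this section and determined by the symmetric signature $\sigma^*(X) \in L^3(\IZ G, w_1(X))$. The direction \eqref{the:Poincare_duality_groups_in_dimension_three:all} $\Rightarrow$ \eqref{the:Poincare_duality_groups_in_dimension_three:one} is immediate: apply \eqref{the:Poincare_duality_groups_in_dimension_three:all} with $N = T^2$, whose fundamental group $\IZ^2$ is Farrell-Jones by Theorem~\ref{the:status_of_the_Full_Farrell-Jones_Conjecture}; the resulting simple homotopy equivalence $M \to X \times T^2$ establishes \eqref{the:Poincare_duality_groups_in_dimension_three:one} with $N_0 = T^2$.

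For \eqref{the:Poincare_duality_groups_in_dimension_three:one} $\Rightarrow$ \eqref{the:Poincare_duality_groups_in_dimension_three:all}, Theorem~\ref{the:existence_of_a_normal_map} provides a degree-one normal map $(f_0, \bar f_0): M_0 \to X$ with simple surgery obstruction $\sigma^s(f_0) \in L^s_3(\IZ G, w_1(X))$. Crossing with $\id_N$ yields a degree-one normal map $M_0 \times N \to X \times N$ whose simple surgery obstruction equals $\sigma^s(f_0) \cdot \sigma^*(N) \in L^s_{3+\dim N}(\IZ[G \times \pi_1(N)])$ by the product formula. In dimension $3+\dim N \ge 5$, once this obstruction vanishes, surgery produces a normally bordant simple homotopy equivalence $M \to X \times N$ with $M$ in the same category (smooth, PL, or TOP) as $N$. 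Since $G$, $\pi_1(N)$, and $G \times \pi_1(N)$ are all Farrell-Jones, Whitehead and negative $K$-theory vanish, all $L$-decorations collapse, and the assembly map is an isomorphism. Under this identification, and using that the closed manifold $N$ carries a canonical $\mathbf L^\bullet(\IZ)$-fundamental class $[N]_{\mathbf L}$, the element $\sigma^s(f_0) \cdot \sigma^*(N)$ corresponds to the external $\mathbf L$-homology product of $[\sigma^s(f_0)] \in H_3^G(EG; \mathbf L)$ with $[N]_{\mathbf L}$.

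Hypothesis \eqref{the:Poincare_duality_groups_in_dimension_three:one} now supplies a closed topological manifold $P$ and a homotopy equivalence $P \simeq X \times N_0$. The canonical $\mathbf L^\bullet$-fundamental class $[P]_{\mathbf L}$ transports to an $\mathbf L^\bullet$-homology lift of $\sigma^*(X \times N_0) = \sigma^*(X) \cdot \sigma^*(N_0)$ in $H_{3+\dim N_0}(X \times N_0; \mathbf L^\bullet(\IZ))$. Because $N_0$ is itself a manifold, pairing with $[N_0]_{\mathbf L}$ admits a left inverse via slant product using $\mathbf L^\bullet$-Poincar\'e duality for $N_0$; translated through the three Farrell-Jones isomorphisms (for $G$, $\pi_1(N_0)$, and their product), this yields a K\"unneth-type splitting that descends the lift on $X \times N_0$ to a genuine $\mathbf L^\bullet$-homology lift of $\sigma^*(X)$ in $H_3(BG; \mathbf L^\bullet(\IZ))$. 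By Wall realization, applicable because Farrell-Jones makes assembly surjective, one may modify $(f_0, \bar f_0)$ within its normal bordism class to absorb the difference between symmetric and quadratic signatures, achieving $\sigma^s(f_0) = 0$ in $L_3(\IZ G)$. Then $\sigma^s(f_0) \cdot \sigma^*(N) = 0$ for every $N$, as required.

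\textbf{The main obstacle} is the descent from $X \times N_0$ to $X$ at the level of $\mathbf L^\bullet$-homology: one must produce a genuine K\"unneth splitting of pairing with $[N_0]_{\mathbf L}$. This is precisely where the Farrell-Jones assumption is indispensable, reducing the equivariant $L$-theory question to a non-equivariant $\mathbf L$-homology computation in which the canonical $\mathbf L^\bullet$-orientation and Poincar\'e duality of the topological manifold $N_0$ supply the required splitting; without FJ, the product $\sigma^s(f_0) \cdot \sigma^*(N_0) = 0$ would contain no information about $\sigma^s(f_0)$ itself.
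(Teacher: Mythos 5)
Your proposal follows the same overall architecture as the paper's argument: reduce both statements to the vanishing of an invariant of $X$ built from the symmetric signature, obtain one degree-one normal map with vanishing quadratic surgery obstruction from that vanishing, and then cross with $N$ and do surgery in dimension $\geq 5$.  The implication \eqref{the:Poincare_duality_groups_in_dimension_three:all}$\Rightarrow$\eqref{the:Poincare_duality_groups_in_dimension_three:one} via $N_0 = T^2$ is fine.  However, the crucial step in \eqref{the:Poincare_duality_groups_in_dimension_three:one}$\Rightarrow$\eqref{the:Poincare_duality_groups_in_dimension_three:all} — descending from information about $X \times N_0$ to information about $X$ — is asserted rather than proved, and the mechanism you describe is both harder and looser than what is actually available.

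Concretely, you claim that ``pairing with $[N_0]_{\mathbf L}$ admits a left inverse via slant product,'' yielding a K\"unneth-type splitting that descends an $\bfL^\bullet$-homology lift over $X \times N_0$ to one over $X$.  Nothing you write establishes such a splitting, and verifying it would require tracking compatibility of the external product with the $1$-connective cover fibration $\bfL\langle 1\rangle \to \bfL \to \bfL/\bfL\langle 1\rangle$ — which is essentially the full content of the product formula.  The paper avoids this by working instead with the numerical invariant $h^0(s^{\sym}(X)) \in \IZ$ and proving the multiplicative identity $1 - 8\,h^0(s^{\sym}(X_0 \times X_1)) = (1 - 8\,h^0(s^{\sym}(X_0)))(1 - 8\,h^0(s^{\sym}(X_1)))$, so that once $s^{\sym}(X \times N_0) = 0$ (by homotopy invariance applied to the homotopy equivalence with a closed manifold) and $s^{\sym}(N_0) = 0$ (because $N_0$ is a closed manifold), the vanishing of $s^{\sym}(X)$ follows purely from integer arithmetic — no divided $\bfL$-homology class is needed.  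Remark~\ref{the:Special_case_of_Theorem_ref(the:product_formula)} in fact proves exactly the special case needed here ($n_0 = 3$, $X_1$ a closed manifold) directly from the product formula for symmetric signatures, without constructing any splitting.  Your route would also have to contend with 2-torsion: the comparison between quadratic and symmetric total surgery obstructions goes through the injective (but not surjective) map $\sym_0 \colon L_0(\IZ) \to L^0(\IZ)$, so the passage from a symmetric-side statement to the vanishing of the quadratic surgery obstruction is delicate and is handled in the paper by working in groups modulo $2$-torsion.

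Two smaller issues: modifying $(f_0, \overline{f_0})$ ``within its normal bordism class'' does not change $\sigma^s$, which is a bordism invariant — you must change the normal bordism class, and the achievable change in $\sigma^s$ is exactly the image of $H_3^G(\widetilde{X};\bfL^s_{\IZ,w}\langle 1\rangle) \to L_3^s(\IZ G,w)$, which is why the total surgery obstruction lives in the cokernel $L_0(\IZ)$ and is precisely what obstructs achieving $\sigma^s = 0$.  And ``Wall realization is applicable because Farrell-Jones makes assembly surjective'' conflates two different things: Wall realization concerns realizing elements of $L_{n+1}$ as obstructions of bordisms, while the relevant fact here is the bijectivity of the assembly map, which identifies the image of the surgery-obstruction map and pins down the indeterminacy.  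Your outline is sound, but the K\"unneth-splitting step hides exactly the work the paper's product formula is designed to do.
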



\subsection{The quadratic total surgery obstruction}
\label{subsec:The_quadratic_total_surgery_obstruction}

Let $G$ be a group together with an orientation homomorphism $w \colon G \to \{\pm 1\}$.
Then there is a covariant functor 
\[
\bfL_{\IZ,w}^{\epsilon }  \colon \Or(G) \to \SPECTRA
\]
from the orbit category to the category of spectra, where the decoration
$\epsilon$ is $\langle i \rangle$ for some $i \in \{2,1,0,-1,\ldots \} \amalg \{-\infty\}$, 
see~\cite[Definition~4.1 on page~145]{Ranicki(1992)}.
Notice that the decoration $\langle i \rangle$ for $i =2,1,0$ is also denoted by $s,h,p$
in the literature.  From $\bfL_{\IZ,w}^{\epsilon } $ we obtain a $G$-homology theory on
the category of $G$-$CW$-complexes $H^G_*(-;\bfL^{\epsilon}_{\IZ,w})$ such that for every
subgroup $H \subseteq G$ and $n \in \IZ$ we have identifications
\[
H_n^G(G/H;\bfL^{\epsilon}_{\IZ,w}) \cong \pi_n(\bfL^{\epsilon}_{\IZ,w}(G/H))
\cong L^{\epsilon}_n(\IZ H, w|_H),
\]
where $L^{\epsilon}_n(\IZ H, w|_H) $ denotes the $n$-th quadratic $L$-group with
decoration $\epsilon$ of $\IZ G$ with the $w$-twisted involution, see~\cite[Section~4
and~7] {Davis-Lueck(1998)}.  The projection $EG \to \pt$ induces the so called assembly
map
\begin{equation}
\asmb^{\epsilon}_n(G,w) \colon   H_n^G(EG;\bfL^{\epsilon}_{\IZ,w}) 
\to H_n^G(\pt;\bfL_{\IZ,w}^{\epsilon}) = L_n^{\epsilon} (\IZ G,w),
\label{asmb}
\end{equation}
which is induced by the projection $EG \to \pt$. 

In the sequel we denote for a spectrum $\bfE$ by
$\bfi(\bfE) \colon \bfE\langle 1 \rangle \to \bfE $ its $1$-connective cover. This is a
map of spectra such that $\pi_n(\bfi(\bfE))$ is an isomorphism for $n \ge 1$ and
$\pi_n(\bfE\langle 1 \rangle) = 0$ for $n \le 0$. We claim that there is a functorial
construction of the $1$-connective cover so that we get from the covariant functor
$\bfL_{\IZ,w}^{\epsilon } \colon \Or(G) \to \SPECTRA$ another covariant functor
$\bfL_{\IZ,w}^{\epsilon}\langle 1 \rangle \colon \Or(G) \to \SPECTRA$ together with a
natural transformation
$\bfi \colon \bfL_{\IZ,w}^{\epsilon}\langle 1 \rangle \to \bfL_{\IZ,w}^{\epsilon}$ such
that $\bfi(G/H)$ is a cofibration of spectra. Then we can also define a functor
$\bfL_{\IZ,w}^{\epsilon}/\bfL_{\IZ,w}^{\epsilon}\langle 1 \rangle \colon \Or(G) \to
\SPECTRA$ together with a natural transformation
$\bfpr \colon \bfL_{\IZ,w}^{\epsilon} \to
\bfL_{\IZ,w}^{\epsilon}/\bfL_{\IZ,w}^{\epsilon}\langle 1 \rangle$ such that for every
object $G/H$ in $\Or(G)$ we obtain a cofibration sequence of spectra
\[
\bfL_{\IZ,w}^{\epsilon}\langle 1 \rangle(G/H) \xrightarrow{\bfi(G/H)} \bfL_{\IZ,w}^{\epsilon}(G/H)
\xrightarrow{\bfpr(G/H)} \bfL_{\IZ,w}^{\epsilon}/\bfL_{\IZ,w}^{\epsilon}\langle 1 \rangle(G/H).
\]
For every $G$-$CW$-complex $Y$ this induces a long exact sequence
\begin{multline}
\cdots \to H_n^G(Y;\bfL^{\epsilon}_{\IZ,w}\langle 1 \rangle) \to H_n^G(Y;\bfL^{\epsilon}_{\IZ,w}) 
\\
\to H_n^G(Y;\bfL^{\epsilon}_{\IZ,w}/\bfL^{\epsilon}_{\IZ,w}\langle 1 \rangle) 
\to H_{n-1}^G(Y;\bfL^{\epsilon}_{\IZ,w}\langle 1 \rangle)
\to \cdots
\label{long_exact_sequence_for_langle_1_rangle}
\end{multline}
and we have 
\[
\pi_n(\bfL_{\IZ,w}^{\epsilon}/\bfL_{\IZ,w}^{\epsilon}\langle 1 \rangle(G/H)) \cong
\begin{cases}
L_n^{\epsilon}(H;w|_H) & n \le 0;
\\
0 & n \ge 1.
\end{cases}
\]

Now consider an aspherical oriented finite $n$-dimensional Poincar\'e complex $X$ with
universal covering $\widetilde{X} \to X$, fundamental group $G = \pi_1(X)$ and orientation
homomorphism $w = w_1(X) \colon G \to \{\pm 1\}$ in the sense
of~\cite{Wall(1967)}.  We can read $w$ from the underlying
$CW$-complex $X$ as follows.
For any abelian group $A$ we denote by $A^w$ the $\IZ G$-module whose underlying
abelian group is $A$ and on which $g \in G$ acts by multiplication with $w(g)$.
Now we use the isomorphism $H_n(C^{n-*}(\widetilde{X})) \cong_{\IZ\pi} \IZ^w$ coming from
Poincar\'e duality,
where $C^{n-*}(\widetilde{X})$ is the (untwisted) $\IZ\pi$-dual chain complex of the cellular
$\IZ\pi$-chain complex $C_*(\widetilde{X})$ of the universal covering $\widetilde{X}$.

There is an equivariant version of the Atiyah-Hirzebruch spectral sequence,
whose $E^2$-term is given by $E_{p,q}^2 =  H_{p}^G(\widetilde{X};\pi_q(\bfL^{\epsilon}_{\IZ,w}/\bfL^{\epsilon}_{\IZ,w}\langle 1 \rangle))$
and which converges to $H_{p+q}^G(\widetilde{X};\bfL^{\epsilon}_{\IZ,w}/\bfL^{\epsilon}_{\IZ,w}\langle 1 \rangle)$,
see for instance~\cite[4.7]{Davis-Lueck(1998)}. It implies 
$H_{n+1}^G(\widetilde{X};\bfL^{\epsilon}_{\IZ,w}/\bfL^{\epsilon}_{\IZ,w}\langle 1 \rangle)
= 0$ and yields an isomorphism 
\begin{equation}
H_n^G(\widetilde{X};\bfL^{\epsilon}_{\IZ,w}/\bfL^{\epsilon}_{\IZ,w}\langle 1 \rangle) 
\xrightarrow{\cong} H_n^G(\widetilde{X};L_0^{\epsilon}(\IZ)^w).
\label{lambda:1}
\end{equation}

Poincar\'e duality yields an isomorphism
\begin{equation}
H_n^G(\widetilde{X};L_0^{\epsilon}(\IZ)^w)  \xrightarrow{\cong} H^0_G(\widetilde{X};L_0^{\epsilon}(\IZ)),
\label{lambda:2}
\end{equation}
where $G$ acts trivially on $L_0^{\epsilon}(\IZ)$ in
$H^0_G(\widetilde{X};L_0^{\epsilon}(\IZ))$. There is an obvious isomorphism
\begin{equation}
H^0_G(\widetilde{X};L_0^{\epsilon}(\IZ)) \xrightarrow{\cong} H^0(X;L_0^{\epsilon}(\IZ)) \cong L_0^{\epsilon}(\IZ).
\label{lambda:3}
\end{equation}
Notice that $L_0^{\epsilon}(\IZ)$ is independent of the decoration $\epsilon$ and hence we
abbreviate $L_0(\IZ) = L_0^{\epsilon}(\IZ)$. We obtain from~\eqref{lambda:1},~\eqref{lambda:2},
and~\eqref{lambda:3} an isomorphism
\begin{equation}
H_n^G(\widetilde{X};\bfL^{\epsilon}_{\IZ,w}/\bfL^{\epsilon}_{\IZ,w}\langle 1 \rangle) \xrightarrow{\cong} L_0(\IZ).
\label{lambda:4}
\end{equation}
Its composition with $H_n^G(\widetilde{X};\bfL^{\epsilon}_{\IZ,w}) 
\to H_n^G(\widetilde{X};\bfL^{\epsilon}_{\IZ,w}/\bfL^{\epsilon}_{\IZ,w}\langle 1 \rangle)$
is denoted by
\begin{equation}
\lambda^{\epsilon}_n(X) \colon H_n^G(\widetilde{X};\bfL^{\epsilon}_{\IZ,w}) \to L_0(\IZ).
\label{lambda}
\end{equation}
From the exact sequence~\eqref{long_exact_sequence_for_langle_1_rangle} we obtain a short exact sequence
\begin{equation}
0 \to H_n^G(\widetilde{X};\bfL^{\epsilon}_{\IZ,w}\langle 1 \rangle) 
\xrightarrow{H_n^G(\id_{\widetilde{X}};\bfi)}
H_n^G(\widetilde{X};\bfL^{\epsilon}_{\IZ,w}) \xrightarrow{\lambda^{\epsilon}_n(X)}  L_0(\IZ).
\label{short_exact_sequence_to_L_0(IZ)}
\end{equation}
For every $\epsilon$ there is a natural
transformation
\[
\bfe^{\epsilon} \colon \bfL^{\epsilon} \to \bfL^{\langle -\infty \rangle}
\]
such that $e_n^{\epsilon} := \pi_n(\bfe^{\epsilon})\colon L_n^{\epsilon} (\IZ G,w)  \to L_n^{\langle - \infty \rangle}(\IZ G,w)$
is the classical change of decoration homomorphism  and the following diagram 
\[
\xymatrix@!C= 17em{%
H_n^G(\widetilde{X};\bfL^{\epsilon}_{\IZ,w}) \ar[r]^-{\asmb_n^{\epsilon}(X)} \ar[d]_{H_n^G(\id_{\widetilde{X}};\bfe^{\epsilon})}
&
H_n^G(\pt;\bfL^{\epsilon}_{\IZ,w}) = L_n^{\epsilon} (\IZ G,w) \ar[d]^{e_n^{\epsilon}}
\\
H_n^G(\widetilde{X};\bfL^{\langle -\infty\rangle}_{\IZ,w}) \ar[r]_-{\asmb^{\langle - \infty \rangle}_n(X)} 
&
H_n^G(\pt;\bfL_{\IZ,w}^{\langle - \infty \rangle}) = L_n^{\langle - \infty \rangle}(\IZ G,w) 
}
\]
commutes. Note that since $X$ is aspherical,  $G$ must be torsion free. If $G$ is a Farrell-Jones group, then
$\Wh(G)$, $\widetilde{K}_0(\IZ G)$ and $K_m(\IZ G)$ for $m \le -1$ vanish and hence all
maps in the commutative diagram are isomorphisms, in particular, the choice of the
decoration $\epsilon$ does not matter.

Let $\caln(X)$ be the set of normal bordism classes of degree one normal maps with
target $X$.  Suppose that $\caln(X)$ is not empty. Consider a normal map
$(f,\overline{f})$ of degree one with target $X$
\[
\xymatrix{TM \oplus \underline{\IR^a} \ar[r]^-{\overline{f}} \ar[d]
&
\xi \ar[d]
\\
M \ar[r]^-{f}
&
X
}
\]
One can assign to it its simple surgery obstruction
$\sigma^s(f,\overline{f}) \in L_n^s(\IZ G, w)$.  (This makes sense for all dimensions
$n$.)  Fix a normal map $(f_0,\overline{f_0})$. Then there is a commutative diagram
\begin{eqnarray}
\xymatrix@!C=16em{%
\caln^{\operatorname{TOP}}(X) \ar[r]^{\sigma^s(-,-)  - \sigma^s(f_0,\overline{f_0})}\ar[d]_{s_0}^{\cong} & L_n^s(\IZ G,w) 
\\
H_n^G(\widetilde{X};\bfL^s_{\IZ,w}\langle 1 \rangle)
\ar[r]_{H_n^G(\id_{\widetilde{X}};\bfi)}
&
H_n^G(\widetilde{X};\bfL^s_{\IZ,w}) \ar[u]_{\asmb^s_n(X)}^{\cong}
}
\label{diagram_relating_caln_and_H_n-;L)}
\end{eqnarray}
whose vertical arrows are bijections. The upper arrow sends the class of $(f,\overline{f})$ to
the difference $\sigma^s(f,\overline{f}) - \sigma^s(f,\overline{f_0})$.
This follows from the work of Ranicki~\cite[Proof of Theorem~17.4 on pages~191ff]{Ranicki(1992)}
 using~\cite[Theorem~B1]{Connolly-Davis-Khan(2014H1)}.
A detailed and careful exposition of the proof of the existence of the diagram above can be found 
in~\cite[Proposition~14.18]{Kuehl-Macko-Mole(2013)}. The right vertical arrow is an isomorphism, 
provided that $G$ is a Farrell-Jones group.

Now consider the composition
\begin{equation}
\mu^s_n(X) \colon \caln(X) \xrightarrow{\sigma^s} L_n^s(\IZ G,w) 
\xrightarrow{\asmb^s_n(X)^{-1}} H_n^G(\widetilde{X};\bfL^s_{\IZ,w})
\xrightarrow{\lambda_n^{\epsilon}(X)}  L_0(\IZ),
\label{mu_upper_s_n}
\end{equation}
where the map $\lambda_n^{\epsilon}(X)$ has been defined in~\eqref{lambda}.  From the
exact sequence~\eqref{short_exact_sequence_to_L_0(IZ)} and the
diagram~\ref{diagram_relating_caln_and_H_n-;L)} we conclude that there is precisely one
element, called the \emph{quadratic total surgery obstruction},
\begin{equation}
s(X) \in L_0(\IZ) 
\label{total_surgery_obstruction}
\end{equation}
such that for any element $[(f,\overline{f})]$ in $\caln(X)$ its image under $\mu^s_n(X)$
is $s(X)$. Moreover, we get

\begin{theorem}[The quadratic total surgery obstruction]\label{the:quadratic_total_surgery_obstruction}
  Let $X$ be an aspherical oriented finite $n$-dimensional Poincar\'e complex $X$ with
  universal covering $\widetilde{X} \to X$, fundamental group $G = \pi_1(X)$ and
  orientation homomorphism $w = w_1(X) \colon G \to \{\pm 1\}$. Suppose that $G$ is a
  Farrell-Jones group and that $\caln(X)$ is non-empty.  Then:

  \begin{enumerate}

  \item\label{the:quadratic_total_surgery_obstruction:normal}
  There exists a normal map $(f,\overline{f})$ of degree one with target $X$ 
  whose simple surgery obstruction $\sigma^s(f,\overline{f}) \in L_n^s(\IZ G,w)$ vanishes,
  if and only if $s(X) \in L_0(\IZ)$ vanishes;

  \item\label{the:quadratic_total_surgery_obstruction:vanishing} 
   If $X$ is homotopy equivalent to a closed topological manifold, then  $s(X) \in L_0(\IZ)$ vanishes.

  \end{enumerate}
\end{theorem}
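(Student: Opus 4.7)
The heavy lifting has already been done in setting up the various maps; the plan is essentially to chase the short exact
sequence~\eqref{short_exact_sequence_to_L_0(IZ)} and the commutative diagram~\eqref{diagram_relating_caln_and_H_n-;L)}.

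For assertion~\eqref{the:quadratic_total_surgery_obstruction:normal}, the forward implication is immediate: if some normal map $(f,\overline{f})$ of degree one with target $X$ has $\sigma^s(f,\overline{f})=0$, then by the definition~\eqref{mu_upper_s_n} of $\mu_n^s(X)$ one has $\mu_n^s(X)([(f,\overline{f})]) = \lambda_n^{\epsilon}(X)(\asmb_n^s(X)^{-1}(0)) = 0$, and since $\mu_n^s(X)$ is the constant map with value $s(X)$ on $\caln(X)$, it follows that $s(X)=0$. For the backward direction, fix an arbitrary $(f_0,\overline{f_0})\in\caln(X)$, which exists by hypothesis. Assume $s(X)=0$. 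Then $y_0 := \asmb_n^s(X)^{-1}(\sigma^s(f_0,\overline{f_0}))$ satisfies $\lambda_n^{\epsilon}(X)(y_0)=0$, so by exactness of~\eqref{short_exact_sequence_to_L_0(IZ)} there exists $x\in H_n^G(\widetilde{X};\bfL^s_{\IZ,w}\langle 1\rangle)$ with $H_n^G(\id_{\widetilde{X}};\bfi)(x)= -y_0$. Let $[(f_1,\overline{f_1})]\in\caln(X)$ be the class corresponding to $x$ under the bijection $s_0$ from diagram~\eqref{diagram_relating_caln_and_H_n-;L)}. Commutativity of that diagram yields
\[
\sigma^s(f_1,\overline{f_1}) - \sigma^s(f_0,\overline{f_0}) = \asmb_n^s(X)\circ H_n^G(\id_{\widetilde{X}};\bfi)(x) = -\sigma^s(f_0,\overline{f_0}),
\]
so $\sigma^s(f_1,\overline{f_1})=0$, as desired.

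For assertion~\eqref{the:quadratic_total_surgery_obstruction:vanishing}, the plan is to exhibit a single normal map with target $X$ whose surgery obstruction vanishes and to apply part~\eqref{the:quadratic_total_surgery_obstruction:normal}. Suppose $h\colon M\to X$ is a homotopy equivalence with $M$ a closed topological manifold. Choose a homotopy inverse $h^{-1}\colon X\to M$, set $\xi := (h^{-1})^*(TM)$ (stabilized if necessary so that the construction yields a genuine vector or topological bundle), and cover $h$ by the induced bundle map $\overline{h}\colon TM\oplus\underline{\IR^a}\to\xi$. Since $h$ is a homotopy equivalence and $M$ has the same fundamental class as $X$, the pair $(h,\overline{h})$ is a normal map of degree one with target $X$. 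Because $G=\pi_1(X)$ is a Farrell-Jones group, $\Wh(G)=0$ and so $h$ is a simple homotopy equivalence. The simple algebraic surgery obstruction of any simple homotopy equivalence vanishes (this is built into the definition of $\sigma^s$ via algebraic surgery, see Ranicki~\cite{Ranicki(1992)}, and holds in all dimensions). Hence $\sigma^s(h,\overline{h})=0$, and part~\eqref{the:quadratic_total_surgery_obstruction:normal} gives $s(X)=0$.

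The only real subtlety is dimensional: in dimension $n=3$ one normally cannot do surgery to produce a homotopy equivalence from a normal map, but here the algebraic surgery obstruction $\sigma^s$ and the commutativity of~\eqref{diagram_relating_caln_and_H_n-;L)} are set up to make sense in every dimension, so the argument goes through without any dimension hypothesis. The key inputs that make everything work are that $G$ is a Farrell-Jones group (so that $\asmb_n^s(X)$ is an isomorphism and $\Wh(G)=0$) and that $X$ is aspherical (which is what produces the clean identification~\eqref{lambda:4} using Poincar\'e duality on $\widetilde X$).
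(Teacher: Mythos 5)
Your proof is correct and follows essentially the same route as the paper's. The forward direction of part~\eqref{the:quadratic_total_surgery_obstruction:normal} is the paper's ``obvious'' step, and your backward direction is exactly the paper's diagram chase through the exact sequence~\eqref{short_exact_sequence_to_L_0(IZ)} and the commutative square~\eqref{diagram_relating_caln_and_H_n-;L)} (only stated somewhat more transparently than the paper's slightly confusing $-(f_0,\overline{f_0})$ notation). For part~\eqref{the:quadratic_total_surgery_obstruction:vanishing} the paper simply asserts that a (simple) homotopy equivalence to a closed manifold yields a normal map with vanishing surgery obstruction and invokes part~\eqref{the:quadratic_total_surgery_obstruction:normal}; you fill in the construction of that normal map $(h,\overline{h})$ and the observation that $\Wh(G)=0$ promotes $h$ to a simple homotopy equivalence, which is an accurate unwinding of the same argument.
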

\begin{proof}~\ref{the:quadratic_total_surgery_obstruction:normal}
The ``only if''-statement is obvious. The ``if''- statement is proved as follows. The vanishing of
$s(X) \in L_0(\IZ)$ implies that the element $-(f_0,\overline{f_0})$ in $\caln^{\operatorname{TOP}}(X)$
is  sent under $\mu_n^s(X)$ to zero. The exact sequence~\eqref{short_exact_sequence_to_L_0(IZ)}
implies that the composite $\caln(X) \xrightarrow{\sigma^s} L_n^s(\IZ G,w) 
\xrightarrow{\asmb^s_n(X)^{-1}} H_n^G(\widetilde{X};\bfL^s_{\IZ,w})$
sends $-(f_0,\overline{f_0})$ to an element which is in the image of
$H_n^G(\id_{\widetilde{X}};\bfi) \colon H_n^G(\widetilde{X};\bfL^s_{\IZ,w}\langle 1 \rangle)
\to H_n^G(\widetilde{X};\bfL^s_{\IZ,w})$. We conclude from the diagram~\eqref{diagram_relating_caln_and_H_n-;L)}
that $-\sigma^s(f_0,\overline{f_0})$ lies in the image of the  upper horizontal arrow  of the 
diagram~\ref{diagram_relating_caln_and_H_n-;L)}. Therefore there is an element $-(f,\overline{f})$ in $\caln^{\operatorname{TOP}}(X)$
which satisfies $\sigma^s(f,\overline{f})-\sigma^s(f_0,\overline{f_0}) = -\sigma^s(f_0,\overline{f_0})$ and hence
$\sigma^s(f,\overline{f}) = 0$.
\\[1mm]\ref{the:quadratic_total_surgery_obstruction:vanishing} 
If $X$ is simply homotopy equivalent to a closed topological manifold, then there exists an element in
$[(f,\overline{f})]$ in $\caln(X)$ with $\sigma^s(f,\overline{f}) = 0$. Now apply
assertion~\ref{the:quadratic_total_surgery_obstruction:normal}.
\end{proof}

Notice that
Theorem~\ref{the:quadratic_total_surgery_obstruction}~\eqref{the:quadratic_total_surgery_obstruction:normal}
holds also in dimensions $n \le 4$.  We are \emph{not} claiming in
Theorem~\ref{the:quadratic_total_surgery_obstruction}~\eqref{the:quadratic_total_surgery_obstruction:normal}
that that we can arrange $f$ to be a simple homotopy equivalence.  This conclusion from
the vanishing of the simple surgery obstruction does require $n \ge 5$.


\subsection{The symmetric total surgery obstruction}
\label{subsec:The_symmetric_total_surgery_obstruction}

There is also a symmetric version of the material of
Subsection~\ref{subsec:The_quadratic_total_surgery_obstruction}.  There is a covariant
functor
\[
\bfL_{\IZ,w}^{\epsilon,\sym}  \colon \Or(G) \to \SPECTRA
\]
from the orbit category to the category of spectra such that for every
subgroup $H \subseteq G$ and $n \in \IZ$ we have identifications
\[
H_n(G/H;\bfL^{\epsilon,\sym}_{\IZ,w}) \cong \pi_n(\bfL^{\epsilon,\sym}_{\IZ,w}(G/H)) 
\cong L^n_{\epsilon}(\IZ H, w|_H),
\]
where $L^n_{\epsilon}(\IZ H, w|_H)$ denotes the $4$-periodic $n$-th symmetric $L$-group with decoration
$\epsilon$ of $\IZ G$ with the $w$-twisted involution.  The projection $EG\to
\pt$ induces the symmetric assembly map
\begin{equation}
\asmb_n^{\epsilon,\sym}(X) \colon   H_n^G(EG;\bfL^{\epsilon,\sym}_{\IZ,w}) 
\to H_n^G(\pt;\bfL_{\IZ,w}^{\epsilon,\sym}) = L^n_{\epsilon} (\IZ G,w),
\label{asmb_upper-sym}
\end{equation}
which is induced by the projection $\widetilde{X} \to \pt$.

There is a natural transformation called symmetrization of covariant functors $\Or(G) \to \SPECTRA$
\begin{equation}
\bfsym^{\epsilon} \colon \bfL^{\epsilon}_{\IZ,w} \to \bfL^{\epsilon,\sym}_{\IZ,w}.
\label{bfsym}
\end{equation}
It induces the classical symmetrization homomorphisms on homotopy groups
\begin{equation}
\sym_n^{\epsilon}(G/H) \colon L_n^{\epsilon}(\IZ H,w|_H) \to L^n_{\epsilon}(\IZ H,w|_H),
\label{sym_n_upper_epsilon}
\end{equation}
which are isomorphism after inverting $2$, see~\cite[Proposition~8.2]{Ranicki(1980a)}.
We obtain a natural transformation of $G$-homology theories, see~\cite[Lemma~4.6]{Davis-Lueck(1998)}.
\begin{equation}
H_*^G(-;\bfsym^{\epsilon})\colon 
H_*^G(-;\bfL^{\epsilon}_{\IZ,w}) \to H_*^G(-;\bfL^{\epsilon,\sym}_{\IZ,w})
\label{trans_of_G-homologie_theories_sym_ast}
\end{equation}
satisfying

\begin{theorem}\label{the:H_ast_upper_G(-;bfsym_upper(epsilon)_iso_after_inverting_2}
For every $n \in \IZ$ and every $G$-$CW$-complex $X$ the maps
\[
H_*^G(-;\bfsym^{\epsilon}) \colon H_n^G(X;\bfL^{\epsilon}_{\IZ,w}) \to H_n^G(X;\bfL^{\epsilon,\sym}_{\IZ,w}) 
\]
are isomorphisms after inverting $2$.
\end{theorem}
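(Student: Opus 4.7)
The plan is to reduce to the case of a single orbit $G/H$ and then invoke a standard comparison principle for equivariant homology theories. Since inverting $2$ is an exact functor on abelian groups, the assignments $X \mapsto H_n^G(X;\bfL^\epsilon_{\IZ,w})\otimes\IZ[1/2]$ and $X \mapsto H_n^G(X;\bfL^{\epsilon,\sym}_{\IZ,w})\otimes\IZ[1/2]$ remain $G$-equivariant homology theories on the category of $G$-$CW$-complexes, and the natural transformation $H_n^G(-;\bfsym^\epsilon)\otimes\IZ[1/2]$ is a natural transformation between them.

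First I would verify the claim on orbits. For $X = G/H$ and any $n \in \IZ$ we have identifications
\[
H_n^G(G/H;\bfL^\epsilon_{\IZ,w}) \cong L_n^\epsilon(\IZ H, w|_H), \qquad H_n^G(G/H;\bfL^{\epsilon,\sym}_{\IZ,w}) \cong L^n_\epsilon(\IZ H, w|_H),
\]
and under these identifications $H_n^G(\id_{G/H};\bfsym^\epsilon)$ becomes the classical symmetrization map $\sym_n^\epsilon(G/H)$ of~\eqref{sym_n_upper_epsilon}. By the cited result of Ranicki~\cite[Proposition~8.2]{Ranicki(1980a)}, this map is an isomorphism after inverting $2$. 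So the transformation restricted to orbits is an isomorphism after inverting $2$.

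Next I would bootstrap to arbitrary $G$-$CW$-complexes by the standard comparison argument. Both source and target, after inverting $2$, satisfy the Eilenberg--Steenrod-type axioms for a $G$-homology theory (homotopy invariance, excision in the form of Mayer--Vietoris, disjoint union, and the long exact sequence of a pair), as already used in~\cite[Lemma~4.6]{Davis-Lueck(1998)}. By induction over the equivariant skeleta of $X$, the transformation is an isomorphism on $n$-skeleta by the five lemma applied to the Mayer--Vietoris sequences for the pushouts
\[
\coprod_j G/H_j \times S^{n-1} \to X^{(n-1)}, \qquad \coprod_j G/H_j \times D^n \to X^{(n)},
\]
using that it is an isomorphism on each $G/H_j \times S^{n-1}$ and $G/H_j \times D^n$ (which reduces to the orbit case via the suspension isomorphism). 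Passing to filtered colimits, which commute with both $G$-homology theories and with $-\otimes\IZ[1/2]$, yields the result for all $G$-$CW$-complexes.

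The only real point of care is that the comparison principle applies to the transformation $\bfsym^\epsilon$ itself rather than just its effect on homotopy groups; once one knows $\bfsym^\epsilon(G/H)$ is a $\IZ[1/2]$-equivalence on all orbits, the spectral-sequence or inductive argument above is formal. No step is genuinely hard, so I expect the entire proof to be essentially a one-sentence appeal to the comparison theorem for $G$-homology theories together with Ranicki's computation on coefficients.
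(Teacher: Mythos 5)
Your proof is correct and follows the same route the paper takes implicitly: the paper does not give an explicit proof, but the text immediately preceding the theorem already records the two ingredients — that $\sym_n^\epsilon(G/H)$ is an isomorphism after inverting $2$ by Ranicki's Proposition~8.2, and that $\bfsym^\epsilon$ induces a natural transformation of $G$-homology theories by Davis--L\"uck's Lemma~4.6 — so the theorem is presented as the standard consequence (comparison of $G$-homology theories via induction over skeleta, Mayer--Vietoris, and the five lemma) that you spell out. Your elaboration adds nothing wrong; it simply makes explicit the bootstrapping step that the paper leaves to the reader.
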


The following diagram commutes
\begin{equation}
\xymatrix@!C=12em{%
H_n^G(\widetilde{X};\bfL^{\epsilon}_{\IZ,w}) \ar[r]^{\asmb^{\epsilon}_n(X) } \ar[d]_{H_n^G(EG,\bfsym^{\epsilon})}
& L_n^{\epsilon} (\IZ G,w) \ar[d]^{\sym_n^{\epsilon}(G/G)}
\\
H_n^G(\widetilde{X};\bfL^{\epsilon,\sym}_{\IZ,w}) \ar[r]_{\asmb_n^{\epsilon,\sym}(X)}
&
L^n_{\epsilon} (\IZ G,w).
}
\label{diagram_for_sym_and_asmb}
\end{equation}

There is an  obvious symmetric analog of 
the map~\eqref{lambda}
\begin{equation}
\lambda^{\epsilon,\sym}_n(X) \colon H_n^G(\widetilde{X};\bfL^{\epsilon,\sym}_{\IZ,w}) \to L^0(\IZ),
\label{lambda_sym}
\end{equation}
and  of the short exact 
sequence~\eqref{short_exact_sequence_to_L_0(IZ)}
\begin{equation}
0 \to H_n^G(\widetilde{X};\bfL^{\epsilon,\sym}_{\IZ,w}\langle 1 \rangle) 
\xrightarrow{H_n^G(\id_{\widetilde{X}};\bfi)}
H_n^G(\widetilde{X};\bfL^{\epsilon,\sym}_{\IZ,w}) \xrightarrow{\lambda^{\epsilon,\sym}_n(X)}  L^0(\IZ).
\label{short_exact_sequence_to_L_upper_0(IZ)}
\end{equation}
The following diagram 
\begin{equation}
\xymatrix@!C= 11em{%
0 \longrightarrow 
H_n^G(\widetilde{X};\bfL^{\epsilon}_{\IZ,w}\langle 1 \rangle) 
\ar[r]^-{H_n^G(\id_{\widetilde{X}};\bfi)}
\ar[d]^{H_n^G(\id;\bfsym^{\epsilon}\langle 1 \rangle)}
&
H_n^G(\widetilde{X};\bfL^{\epsilon}_{\IZ,w}) \ar[r]^-{\lambda_n^{\epsilon}(X)}  
\ar[d]^{H_n^G(\id_{\widetilde{X}};\bfsym^{\epsilon})}
&
L_0(\IZ) \ar[d]^{\sym_0}
\\
0 \longrightarrow 
H_n^G(\widetilde{X};\bfL^{\epsilon,\sym}_{\IZ,w}\langle 1 \rangle) 
\ar[r]^-{H_n^G(\id_{\widetilde{X}};\bfi)}
&
H_n^G(\widetilde{X};\bfL^{\epsilon,\sym}_{\IZ,w}) \ar[r]^-{\lambda^{\epsilon,\sym}_n(X)}  
&
L^0(\IZ) 
}
\label{diagram_relating_quadratic_and-symmetric}
\end{equation}
commutes, has exact rows, and all its vertical arrows are bijections after inverting $2$
since the map~\eqref{sym_n_upper_epsilon} is bijective after 
inverting $2$ and we have~\cite[Theorem~4.7] {Davis-Lueck(1998)}.
Under the standard identifications 
\begin{eqnarray}
h_0 \colon L_0(\IZ) & \xrightarrow{\cong}& \IZ;
\label{h_0}
\\
h^0 \colon L^0(\IZ) & \xrightarrow{\cong} & \IZ,
\label{h_upper_0}
\end{eqnarray}
the map $\sym_0 \colon L_0(\IZ) \to L^0(\IZ)$ becomes $8 \cdot \id \colon \IZ \to \IZ$, see the proof 
of~\cite[Proposition~8.2]{Ranicki(1980a)}, and hence is injective.  Define the \emph{symmetric total surgery obstruction}
\begin{equation}
s^{\sym}(X) \in L^0(\IZ) 
\label{symmetric_total_surgery_obstruction}
\end{equation}
to be the image of $s(X)$ defined in~\eqref{total_surgery_obstruction} under the injection
$\sym_0 \colon L_0(\IZ) \to L^0(\IZ)$.
Theorem~\ref{the:quadratic_total_surgery_obstruction} implies

\begin{theorem}[The symmetric total surgery obstruction]\label{the:symmetric_total_surgery_obstruction}
  Let $X$ be an aspherical oriented finite $n$-dimensional Poincar\'e complex $X$ with
  universal covering $\widetilde{X} \to X$, fundamental group $G = \pi_1(X)$ and
  orientation homomorphisms $w = w_1(X) \colon G \to \{\pm 1\}$. Suppose that $G$ is a
  Farrell-Jones group and that $\caln(X)$ is non-empty. Then 

  \begin{enumerate}

  \item\label{the:symmetric_total_surgery_obstruction:normal}
  There exists a normal map of degree one $(f,\overline{f})$ with target $X$ 
  whose simple surgery obstruction $\sigma^s(f,\overline{f}) \in L_n^s(\IZ G,w)$ vanishes,
  if and only if $s^{\sym}(X) \in L^0(\IZ)$ vanishes;

  \item\label{the:symmetric_total_surgery_obstruction:vanishing} 
   If  $X$ is homotopy equivalent to a closed topological manifold, then  $s^{\sym}(X) \in L^0(\IZ)$ vanishes.

  \end{enumerate}
  \end{theorem}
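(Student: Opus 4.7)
The plan is to derive this theorem directly from its quadratic counterpart, Theorem~\ref{the:quadratic_total_surgery_obstruction}, using the fact that the symmetric total surgery obstruction is by construction the image of the quadratic one under an injective homomorphism. Recall that $s^{\sym}(X) \in L^0(\IZ)$ was defined as $\sym_0(s(X))$, where $\sym_0 \colon L_0(\IZ) \to L^0(\IZ)$ is the symmetrization map. The crucial preliminary observation is that $\sym_0$ is injective: under the standard identifications $h_0 \colon L_0(\IZ) \xrightarrow{\cong} \IZ$ and $h^0 \colon L^0(\IZ) \xrightarrow{\cong} \IZ$ recorded in~\eqref{h_0} and~\eqref{h_upper_0}, the map $\sym_0$ corresponds to multiplication by $8$, as cited from~\cite[Proposition~8.2]{Ranicki(1980a)}. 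In particular, $s^{\sym}(X) = 0$ if and only if $s(X) = 0$.

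For assertion~\eqref{the:symmetric_total_surgery_obstruction:normal}, I would combine this equivalence with Theorem~\ref{the:quadratic_total_surgery_obstruction}~\eqref{the:quadratic_total_surgery_obstruction:normal}. The latter already provides the equivalence between the existence of a degree one normal map with vanishing simple surgery obstruction and the vanishing of $s(X) \in L_0(\IZ)$. Replacing $s(X) = 0$ by the equivalent condition $s^{\sym}(X) = 0$ finishes this part.

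For assertion~\eqref{the:symmetric_total_surgery_obstruction:vanishing}, if $X$ is homotopy equivalent to a closed topological manifold, then Theorem~\ref{the:quadratic_total_surgery_obstruction}~\eqref{the:quadratic_total_surgery_obstruction:vanishing} gives $s(X) = 0$ in $L_0(\IZ)$, and applying $\sym_0$ yields $s^{\sym}(X) = \sym_0(s(X)) = 0$ in $L^0(\IZ)$.

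There is essentially no obstacle here, since all the serious work has been done in setting up the quadratic obstruction, in verifying that the diagram~\eqref{diagram_relating_quadratic_and-symmetric} commutes with exact rows, and in the definition~\eqref{symmetric_total_surgery_obstruction} of $s^{\sym}(X)$ as the image of $s(X)$ under the injection $\sym_0$. The only point one must be careful about is the identification of $\sym_0$ with multiplication by $8$, which forces injectivity; once this is in hand, the theorem is a direct translation of Theorem~\ref{the:quadratic_total_surgery_obstruction} from the quadratic to the symmetric setting.
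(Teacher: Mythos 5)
Your proof is correct and is precisely the argument the paper intends: immediately before stating this theorem the paper defines $s^{\sym}(X)$ as the image of $s(X)$ under the injection $\sym_0$ (identified with multiplication by $8$) and then simply writes ``Theorem~\ref{the:quadratic_total_surgery_obstruction} implies'' the result. You have supplied exactly the omitted details.
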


Now we study the main properties of the symmetric total surgery obstruction.

If $A$ is an abelian group, denote by $A\twotor$ its quotient by the abelian subgroup of
elements in $A$, whose order is finite and a power of two. For an element $a \in A$ denote
by $[a]_2$ its image under the projection $A \to A\twotor$.
 
Next we show that $s^{\sym}(X)$ and $s(X)$ are determined by the image
$[\sigma^{s,\sym}_G(\widetilde{X})]_2$ of 
$\sigma^{s,\sym}_G(\widetilde{X})$ under $L^n_s (\IZ G,w) \to L^n_s (\IZ G,w)\twotor$,
where $\sigma^{s,\sym}_G(\widetilde{X})$ is the symmetric signature in the sense 
of~\cite[Proposition~6.3]{Ranicki(1980a)} taking into account, 
that $G$ is a torsionfree Farrell-Jones group and hence the decorations do not matter.

\begin{theorem}\label{the:symmetric_signature_determines_symmetric_total_surgery_obstruction}
  Let $X$ be an aspherical oriented finite $n$-dimensional Poincar\'e complex $X$ with
  universal covering $\widetilde{X} \to X$, fundamental group $G = \pi_1(X)$ and
  orientation homomorphism $w = w_1(X) \colon G \to \{\pm 1\}$. Suppose that $G$ is a
  Farrell-Jones group and that $\caln(X)$ is non-empty.

Then there is precisely one element $u \in H_n^G(\widetilde{X};\bfL^{s,\sym}_{\IZ,w})\twotor$
such that the injective map
\[
\asmb_n^{s,\sym}(X)\twotor \colon H_n^G(\widetilde{X};\bfL^{s,\sym}_{\IZ,w})\twotor 
\to  L^n_{s} (\IZ G,w)\twotor
\]
sends $u$ to the element $[\sigma_G^{s,\sym}(\widetilde{X})]_2$ 
associated to  the symmetric signature $\sigma_G^{s,\sym}(\widetilde{X})$,
and the composite
\[
H_n^G(\widetilde{X};\bfL^{s,\sym}_{\IZ,w})\twotor 
\xrightarrow{\lambda^{s,\sym}(\widetilde{X})\twotor}L^0(\IZ)\twotor 
\xrightarrow{h_0\twotor} \IZ\twotor = \IZ
\]
sends $u$ to  $1-h^0(s^{\sym}(X)) = 1 - 8 \cdot h_0(s(X))$.
\end{theorem}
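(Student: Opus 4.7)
The plan is to first establish uniqueness of $u$ by proving that $\asmb_n^{s,\sym}(X)\twotor$ is injective (in fact bijective), and then to construct $u$ explicitly from a chosen normal map in $\caln(X)$ using the symmetric $L$-theory fundamental class of its source manifold, computing both the assembly image and the $\lambda^{s,\sym}$-image in one pass.

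For uniqueness, I would combine the commutative diagram~\eqref{diagram_for_sym_and_asmb} with the fact that $G$ is a Farrell-Jones group, so $\asmb^{s}_n(X)$ is an isomorphism, and with Theorem~\ref{the:H_ast_upper_G(-;bfsym_upper(epsilon)_iso_after_inverting_2} together with the fact that $\sym_n^s$ is an isomorphism after inverting $2$. Passing to $\twotor$ turns both vertical arrows in~\eqref{diagram_for_sym_and_asmb} into bijections, so the bottom map $\asmb^{s,\sym}_n(X)\twotor$ is also a bijection. This gives uniqueness of any $u$ satisfying the first condition.

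For existence and the formula, I fix a normal map $(f,\overline{f}) \colon M \to X$ in $\caln(X)$ and let $[M]_{\bfL} \in H_n^{\pi_1(M)}(\widetilde{M}; \bfL^{s,\sym}_{\IZ, w(M)})$ be the symmetric $L$-fundamental class of the topological manifold $M$; it assembles to the symmetric signature $\sigma^{s,\sym}_{\pi_1(M)}(\widetilde{M})$, and under the projection to $\bfL^{s,\sym}/\bfL^{s,\sym}\langle 1 \rangle$ entering the definition of $\lambda^{s,\sym}$, its Poincaré-dual $\pi_0$-component identifies with $1 \in L^0(\IZ)$ by the Poincaré-duality isomorphism~\eqref{lambda:2} in the symmetric setting. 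Let $v_f := (\asmb^s_n(X))^{-1}(\sigma^s(f,\overline{f})) \in H_n^G(\widetilde{X}; \bfL^s_{\IZ,w})$ and define
\[
u := [f_*[M]_{\bfL}]_2 \; - \; [H_n^G(\id_{\widetilde{X}}; \bfsym^s)(v_f)]_2.
\]
By the Ranicki symmetric signature formula for a normal map of degree one, $\sigma^{s,\sym}_G(\widetilde{M}) = \sigma^{s,\sym}_G(\widetilde{X}) + \sym_n^s(G/G)(\sigma^s(f,\overline{f}))$, combined with the commutativity of~\eqref{diagram_for_sym_and_asmb}, the assembly of $u$ equals $[\sigma^{s,\sym}_G(\widetilde{X})]_2$, and the uniqueness established above identifies this $u$ with the one in the statement.

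To finish, I compute $\lambda^{s,\sym}(u)$. Since $f$ has degree one, the push-forward $f_*[M]_{\bfL}$ inherits the Poincaré-dual $\pi_0$-contribution $1 \in L^0(\IZ)$, giving $\lambda^{s,\sym}(f_*[M]_{\bfL}) = 1$. For the correction term, the commutative diagram~\eqref{diagram_relating_quadratic_and-symmetric} together with the defining identity $\lambda^s_n(X)(v_f) = \mu^s_n(X)([f,\overline{f}]) = s(X)$ shows that $\lambda^{s,\sym}(H_n^G(\id_{\widetilde{X}};\bfsym^s)(v_f)) = \sigma_0(s(X)) = s^{\sym}(X)$. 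Hence $\lambda^{s,\sym}(u) = 1 - s^{\sym}(X)$, and applying $h^0$ yields the claimed value $1 - h^0(s^{\sym}(X)) = 1 - 8 \cdot h_0(s(X))$. The main obstacle is the Poincaré-duality identification producing the ``$1$'': unwinding the symmetric analog of~\eqref{lambda:2} and verifying that the $\pi_0$-part of the symmetric $L$-fundamental class of a manifold identifies with the generator $1 \in L^0(\IZ)$, and that degree-one push-forward preserves this, is the subtle geometric input that pins down the constant.
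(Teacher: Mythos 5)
Your proposal is correct and takes essentially the same route as the paper. You establish injectivity of $\asmb_n^{s,\sym}(X)\twotor$ from the diagram~\eqref{diagram_for_sym_and_asmb}, fix a normal map $(f,\overline{f})$, use the Ranicki identity $\sym(\sigma^s(f,\overline f)) = \sigma_G^{s,\sym}(\overline M) - \sigma_G^{s,\sym}(\widetilde X)$, and build $u$ as a difference whose assembly is $[\sigma_G^{s,\sym}(\widetilde X)]_2$ and whose $\lambda$-value is $1 - s^{\sym}(X)$ — this is exactly the paper's $u = u'' - u'$. Two small remarks: first, you assert $\asmb_n^{s,\sym}(X)\twotor$ is ``in fact bijective,'' but passing a map that is an isomorphism after inverting $2$ to the quotient by $2$-primary torsion only yields injectivity in general (surjectivity can fail, as $2\colon\IZ\to\IZ$ shows), and injectivity is all that is needed; second, the ``subtle geometric input'' you flag — that $f_*[M]_{\bfL}$ has $\lambda^{s,\sym}$-value $1$ — is precisely what the paper packages by factoring the symmetric-signature transformation $\Omega_*^{\operatorname{TOP}}(-) \to L^*_s$ through the assembly map as a map of homology theories $\tau_*^{s,\sym}$ and then invoking naturality of the Atiyah--Hirzebruch spectral sequence, and the paper carries this out only in the special case of trivial $w$; you rightly identify this as the crux rather than claiming to have proved it.
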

\begin{proof}
  Consider a normal map$(f,\overline{f})$ of degree one from $M$ to $X$. 

  Since $G$ is a Farrell-Jones group, the assembly map $\asmb$ of~\eqref{asmb} is
  bijective for all $n \in \IZ$.  The homomorphism
 $\sym^s_n \colon L_n^s(\IZ G,w) \to L^n_s(\IZ G,w)$ sends
$\sigma^s(f,\overline{f})$ to
$\sigma_G^{s,\sym}(\overline{M}) - \sigma_G^{s,\sym}(\widetilde{X})$, where
$\overline{M} \to M$ is the pull back of the $G$-covering $\widetilde{X} \to BG$ by $f$,
see~\cite[Section~6]{Ranicki(1980b)}.  
We conclude from the commutative diagram~\eqref{diagram_for_sym_and_asmb} 
that there is an element
$u' \in H_n^G(\widetilde{X};\bfL^{s,\sym}_{\IZ,w})\twotor$ whose image 
under $\asmb_n^{s,\sym}(\widetilde{X})\twotor$ is
$[\sigma_G^{s,\sym}(\overline{M})]_2 - [\sigma_G^{s,\sym}(\widetilde{X})]_2$.

We conclude from the commutative 
diagram~\eqref{diagram_for_sym_and_asmb} 
that the assembly map
\[
\asmb_n^{\epsilon,\sym}(\widetilde{X}) \colon   H_n^G(\widetilde{X};\bfL^{\epsilon,\sym}_{\IZ,w})
\to H_n^G(\pt;\bfL^{\epsilon,\sym}) = L^n_{\epsilon} (\IZ G,w)
\]
of~\eqref{asmb_upper-sym} is an isomorphism after inverting $2$
since the  upper horizontal arrow is the bijective map~\eqref{asmb},
 and the two vertical arrows are isomorphisms after inverting $2$,
see~\eqref{sym_n_upper_epsilon} and  Theorem~\ref{the:H_ast_upper_G(-;bfsym_upper(epsilon)_iso_after_inverting_2}.
Hence the map 
\[
\asmb_n^{s,\sym}(\widetilde{X})\twotor \colon H_n^G(\widetilde{X};\bfL^{s,\sym}_{\IZ,w})\twotor 
\to  L^n_{s} (\IZ G,w)\twotor
\]
is injective.

We conclude from the diagram~\eqref{diagram_relating_quadratic_and-symmetric} that the image of $u'$
under the composite
\[
H_n^G(\widetilde{X};\bfL^{s,\sym}_{\IZ,w})\twotor \xrightarrow{\lambda^{s,\sym}(\widetilde{X})\twotor}L^0(\IZ)\twotor 
\xrightarrow{h_0\twotor} \IZ\twotor = \IZ
\]
is $h^0(s^{\sym}(X))$. We have
$8 \cdot h_0(s(X)) = h^0(s^{\sym}(X))$. Hence it suffices to show
that there is an element $u'' \in H_n^G(\widetilde{X};\bfL^{s,\sym}_{\IZ,w})\twotor$ such
that its image under the map $\asmb_n^{s,\sym}(\widetilde{X})\twotor$ is
$[\sigma_G^{s,\sym}(\overline{M})]_2$ and the image of $u''$ under the composite
\[
H_n^G(\widetilde{X};\bfL^{s,\sym}_{\IZ,w})\twotor \xrightarrow{\lambda_n^{s,\sym}(X)\twotor}L^0(\IZ)\twotor 
\xrightarrow{h^0\twotor} \IZ\twotor = \IZ
\]
is $1$ since then  we can take $u = u'' -u'$.

For simplicity we give the proof of the existence of the element $u''$ only in the special
case, where $w$ is trivial.  For every $n \ge 0$ and every
connected $CW$-complex $X$, the symmetric signature defines a map, see~\cite[Proposition~6.3]{Ranicki(1980b)},
\[
\sigma_n^{s,\sym}(X)\colon \Omega_n^{\operatorname{TOP}}(X) \to L^n_s(\IZ[\pi_1(X)]) ,\quad 
[f \colon M \to X] \mapsto \sigma_G^{s,\sym}(\overline{M}).
\]
Without giving the details of the proof, we claim that this natural transformation of
functors from the category of $CW$-complexes to the category of $\IZ$-graded abelian
groups can be implemented as a functor from the category of $CW$-complexes to the category
of spectra. We conclude from the general theory about assembly maps,
see~\cite[Section~6]{Davis-Lueck(1998)} or~\cite{Weiss-Williams(1994a)}, that we can lift
$\sigma_n^{s,\sym}(X)$ over $\asmb_n^{s,\sym}(X)$ to a map $\tau_n^{s,\sym}(X)$
\[
\xymatrix@!C=12em{
& H_n^G(\widetilde{X};\bfL^{s,\sym}_{\IZ,w}) \ar[d]^{\asmb_n^{s,\sym}(X)}
\\
\Omega^{\operatorname{TOP}}_n(X) \ar[r]_{\sigma^{s,\sym}_n(X)} 
\ar[ru]^-{\tau_n^{s,\sym}(X)}
&
 L^n_{h} (\IZ G)
}
\]
such that $\tau_*^{s,\sym}(-)$ is a transformation of homology theories. Such a construction seems also to be contained 
in~\cite{Laures-McClure(2014)}. Consider the map
\[
\nu_n(X) \colon \Omega^{\operatorname{TOP}}_n(X) \xrightarrow{d_n} H_n(X;\IZ) 
\xrightarrow{- \cap  [X]} H^0(X;\IZ) \to \IZ,
\]
where the first map $d_n$ sends $[f \colon M \to X]$ to $f_*([M])$. The naturality of the
Atiyah-Hirzebruch spectral sequence implies that the following diagram
\[\xymatrix@!C=10em{%
\Omega_n^{\operatorname{TOP}}(X) \ar[r]^-{\nu_n(X)}  \ar[d]_{\tau_n^{s,\sym}(X)}
& 
\IZ 
\\
H_n^G(\widetilde{X};\bfL^{s,\sym}_{\IZ,w}) \ar[r]_-{\lambda^{s,\sym}(X)} 
&
L^0(\pt) 
\ar[u]_{h^0}
}
\]
 commutes. Define $u''$ to be the image of $f \colon M \to X$ under the composite
\[
\Omega_n^{\operatorname{TOP}}(X) \xrightarrow{\tau_n^{s,\sym}(X)} H_n^G(\widetilde{X};\bfL^{s,\sym}_{\IZ,w})  
\to H_n^G(\widetilde{X};\bfL^{s,\sym}_{\IZ,w})\twotor.
\]
Since the degree of $f \colon M \to X$ is one, the image of $[f\colon M \to X]$ under $\nu_n(X)$ is $1$.
An easy diagram chase shows that $u''$ has the desired properties.
This finishes the proof of Theorem~\ref{the:symmetric_signature_determines_symmetric_total_surgery_obstruction}.
\end{proof}

Theorem~\ref{the:symmetric_signature_determines_symmetric_total_surgery_obstruction}
together with the homotopy invariance of the symmetric signature implies the homotopy
invariance of the total surgery obstruction. More precisely, we have

\begin{theorem}[Homotopy invariance of the total surgery obstruction]
\label{the:s(X)_is_homotopy_invariant}
Let $X$ be an aspherical oriented finite $n$-dimensional Poincar\'e complex 
such that $\pi_1(X)$ is a Farrell-Jones group and  $\caln(X)$ is non-empty. 
Let $Y$ be a finite $n$-dimensional $CW$-complex  which is homotopy equivalent to $X$.

Then $Y$ is an aspherical oriented finite $n$-dimensional Poincar\'e complex
such that $\pi_1(Y)$ is a   Farrell-Jones group and such that $\caln(Y)$ is non-empty. We get
\begin{eqnarray*}
s(X) & = & s(Y);
\\
s^{\sym}(X) & = & s^{\sym}(Y).
\end{eqnarray*}
\end{theorem}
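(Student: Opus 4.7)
The plan is to reduce the homotopy invariance of $s(X)$ to the classical homotopy invariance of the symmetric signature, using Theorem~\ref{the:symmetric_signature_determines_symmetric_total_surgery_obstruction} as the bridge.

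First I would verify that $Y$ inherits all the hypotheses. Fix a homotopy equivalence $h \colon X \to Y$. Since $X$ and $Y$ are finite $CW$-complexes and $X$ is aspherical and satisfies Poincar\'e duality, so does $Y$. The identification $\pi_1(h) \colon G = \pi_1(X) \xrightarrow{\cong} \pi_1(Y)$ shows that $\pi_1(Y)$ is a Farrell-Jones group; orient $Y$ via the image $h_*[X]$, so that $w_1(Y) = w$ and $h$ is orientation-preserving. Finally, composing any normal map $(f,\overline{f})$ representing a class in $\caln(X)$ with $h$ (and pulling $\xi$ back along a homotopy inverse of $h$) produces an element of $\caln(Y)$, so $\caln(Y)$ is non-empty.

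Next, I would exploit naturality. The map $h$ lifts to a $G$-equivariant homotopy equivalence $\widetilde h \colon \widetilde X \to \widetilde Y$, which induces an isomorphism
\[
\widetilde h_* \colon H_n^G(\widetilde X;\bfL^{s,\sym}_{\IZ,w}) \xrightarrow{\cong} H_n^G(\widetilde Y;\bfL^{s,\sym}_{\IZ,w})
\]
and similarly after passing to the connective cover and the quotient used to build $\lambda^{s,\sym}_n$. The assembly map $\asmb^{s,\sym}_n$ is natural in $G$-$CW$-complexes by construction. The map $\lambda^{s,\sym}_n$ is also natural under $\widetilde h_*$, because the isomorphism~\eqref{lambda:2} is an instance of Poincar\'e duality, which is compatible with orientation-preserving homotopy equivalences, while~\eqref{lambda:1} and~\eqref{lambda:3} come from the (natural) equivariant Atiyah-Hirzebruch spectral sequence and from connectedness.

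Now the homotopy invariance of the symmetric signature, due to Ranicki, see~\cite[Proposition~6.3]{Ranicki(1980b)}, gives $\sigma^{s,\sym}_G(\widetilde X) = \sigma^{s,\sym}_G(\widetilde Y)$ in $L^n_s(\IZ G,w)$, and therefore $[\sigma^{s,\sym}_G(\widetilde X)]_2 = [\sigma^{s,\sym}_G(\widetilde Y)]_2$. Let $u_X$, respectively $u_Y$, be the unique elements produced by Theorem~\ref{the:symmetric_signature_determines_symmetric_total_surgery_obstruction} for $X$ and $Y$. Naturality of $\asmb^{s,\sym}_n\twotor$ shows that $\widetilde h_*(u_X)$ satisfies the defining property of $u_Y$, and uniqueness forces $\widetilde h_*(u_X) = u_Y$. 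Naturality of $\lambda^{s,\sym}_n\twotor$ and of the identification $h^0$ then yields
\[
1 - 8\cdot h_0\bigl(s(X)\bigr) \;=\; 1 - 8\cdot h_0\bigl(s(Y)\bigr)
\]
in $\IZ$. Since $h_0 \colon L_0(\IZ) \xrightarrow{\cong} \IZ$ is an isomorphism, $s(X) = s(Y)$, and applying the injection $\sym_0$ gives $s^{\sym}(X) = s^{\sym}(Y)$.

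The main obstacle is verifying that $\lambda^{s,\sym}_n$ is genuinely natural under a homotopy equivalence of Poincar\'e complexes; the Poincar\'e duality step in its construction forces us to insist that $h$ be orientation-preserving, which we arranged by the choice of orientation on $Y$. Once this naturality is in place, the rest is a purely formal consequence of the uniqueness clause in Theorem~\ref{the:symmetric_signature_determines_symmetric_total_surgery_obstruction} and the homotopy invariance of $\sigma^{s,\sym}_G$.
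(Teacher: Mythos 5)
Your proposal is correct and follows the same route as the paper: transfer the orientation and Poincar\'e structure along $h$, invoke the homotopy invariance of the symmetric signature (which in the simple-decorated $L$-group requires $\Wh(G)=0$, guaranteed here by $G$ being a torsion-free Farrell-Jones group --- a point the paper flags explicitly and you leave implicit), and then conclude via the uniqueness clause of Theorem~\ref{the:symmetric_signature_determines_symmetric_total_surgery_obstruction}. Your write-up usefully spells out the naturality of $\asmb^{s,\sym}_n$ and $\lambda^{s,\sym}_n$ under $\widetilde h_*$, which the paper's three-line proof leaves to the reader.
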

\begin{proof} Choose a homotopy equivalence $f \colon X \to Y$. 
Define $w_1(Y) \in H^n(Y;\IZ/2)$ to be $f^*w_1(X)$. 
Then obviously $Y$ inherits the structure of an oriented finite $n$-dimensional Poincar\'e complex
from $X$ if we take as fundamental class $[Y]$ the image of $[X]$ under the isomorphism
$H_n^{\pi_1(X)}(\widetilde{X};\IZ^{w_1(X)}) \xrightarrow{\cong} H_n^{\pi_1(Y}(\widetilde{Y};\IZ^{w_1(Y)})$
 induced by $f$. A consequence of the  basic features of the symmetric signature and $G$ being a torsionfree Farrell-Jones group 
is that the isomorphism induced by $\pi_1(f)$ 
\[
L^n_s(\IZ[\pi_1(X)],w_1(X)) \xrightarrow{\cong} L^n_s(\IZ[\pi_1(Y)],w_1(Y))
\]
sends $\sigma^{s,\sym}_{\pi_1(X)}(\widetilde{X})$ to  $\sigma^{s,\sym}_{\pi_1(Y)}(\widetilde{Y})$.
Now apply  Theorem~\ref{the:symmetric_signature_determines_symmetric_total_surgery_obstruction}.
\end{proof}

Next we show a product formula.

\begin{theorem}[Product formula]
\label{the:product_formula}
For each $i \in \{0,1\}$, let $X_i$ be an aspherical oriented finite $n_i$-dimensional Poincar\'e complex  with
fundamental group $G_i = \pi_1(X_i)$ and
orientation homomorphism $v_i :=w_1(X_i) \colon G_i \to \{\pm 1\}$ such that $G_i$ is a
Farrell-Jones group and that $\caln(X_i)$ is non-empty. 

Then $X_0 \times X_1$ is an aspherical oriented finite $(n_0+n_1)$-dimensional Poincar\'e complex with
fundamental group $G_0 \times G_1$ and
orientation homomorphisms $v := w_1(X \times N) \colon G_0 \times G_1 \to \{\pm 1\}$ sending 
$(g_0,g_1)$ to $v_0(g_0)  \cdot v_1(g_1)$
such that $G_0 \times G_1$ is a
Farrell-Jones group and that $\caln(X_0 \times X_1)$ is non-empty, and we get in $\IZ$ 
\[
(1 -8 \cdot h^0(s(X_0 \times X_1)))  =  (1 - 8 \cdot  h^0(s(X_0)))  \cdot (1 - 8 \cdot h^0(s(X_1))).
\]
\end{theorem}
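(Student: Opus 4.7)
The plan is to reduce the formula to the multiplicativity of the symmetric signature together with the ring-spectrum structure on symmetric $L$-theory, and then to invoke the characterization from Theorem~\ref{the:symmetric_signature_determines_symmetric_total_surgery_obstruction}.

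First I would verify the preliminary claims about $X_0 \times X_1$. Asphericity is immediate, and by the K\"unneth formula (with twisted coefficients) $X_0 \times X_1$ is an oriented finite $(n_0+n_1)$-dimensional Poincar\'e complex with fundamental class $[X_0] \times [X_1]$ and orientation homomorphism $v(g_0,g_1) = v_0(g_0) \cdot v_1(g_1)$. The product group $G_0 \times G_1$ is Farrell-Jones by Theorem~\ref{the:status_of_the_Full_Farrell-Jones_Conjecture}~\eqref{the:status_of_the_Full_Farrell-Jones_Conjecture:inheritance:Passing_to_finite_direct_products}, and $\caln(X_0 \times X_1)$ is non-empty because the Cartesian product of degree one normal maps $(f_i,\overline{f_i}) \colon M_i \to X_i$ is a degree one normal map $(f_0 \times f_1, \overline{f_0} \times \overline{f_1}) \colon M_0 \times M_1 \to X_0 \times X_1$.

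Second, for $i \in \{0,1\}$, let $u_i \in H^{G_i}_{n_i}(\widetilde{X_i}; \bfL^{s,\sym}_{\IZ,v_i})\twotor$ be the unique element provided by Theorem~\ref{the:symmetric_signature_determines_symmetric_total_surgery_obstruction} whose image under the assembly map is $[\sigma^{s,\sym}_{G_i}(\widetilde{X_i})]_2$, and let $u_{01}$ be the analogous element for $X_0 \times X_1$. Ranicki's symmetric $L$-theory is a ring spectrum, and in the $\Or$-category setting of~\cite{Davis-Lueck(1998)} this yields an external product pairing
\[
H^{G_0}_{n_0}(\widetilde{X_0}; \bfL^{s,\sym}_{\IZ,v_0}) \otimes H^{G_1}_{n_1}(\widetilde{X_1}; \bfL^{s,\sym}_{\IZ,v_1}) \longrightarrow H^{G_0 \times G_1}_{n_0+n_1}(\widetilde{X_0 \times X_1}; \bfL^{s,\sym}_{\IZ,v})
\]
compatible with the analogous pairing on the target $L$-groups and with the assembly maps. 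Since the symmetric signature is classically multiplicative under products, $u_0 \times u_1$ is sent by the assembly map to $[\sigma^{s,\sym}_{G_0 \times G_1}(\widetilde{X_0 \times X_1})]_2$. By the uniqueness clause of Theorem~\ref{the:symmetric_signature_determines_symmetric_total_surgery_obstruction}, we conclude $u_{01} = u_0 \times u_1$.

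Third, I would apply the composite $h^0 \circ \lambda^{s,\sym}_n$ to this identity. The map $\lambda^{s,\sym}_n(X)$ is built from the projection to the cone of $\bfi$, the edge morphism of the equivariant Atiyah-Hirzebruch spectral sequence, and the Poincar\'e duality of $X$; each of these is natural under external products, Poincar\'e duality for the product being the cup product of the factor dualities, and $h^0 \colon L^0(\IZ) \xrightarrow{\cong} \IZ$ is a ring isomorphism. Hence $h^0 \circ \lambda^{s,\sym}_n$ is multiplicative for external products, and feeding in $u_{01} = u_0 \times u_1$ together with the values $h^0 \lambda^{s,\sym}_{n_i}(u_i) = 1 - 8 \cdot h_0(s(X_i))$ from Theorem~\ref{the:symmetric_signature_determines_symmetric_total_surgery_obstruction} produces the desired identity. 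The main obstacle is the bookkeeping for these multiplicativity statements in the Davis--L\"uck $\Or(G)$-spectra formulation, in particular checking that the external product interacts correctly with the twisted orientation $v = v_0 \cdot v_1$ and with the Poincar\'e-duality identification underlying $\lambda^{s,\sym}_n$; once this is established, the rest is formal.
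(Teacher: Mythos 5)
Your proposal follows essentially the same route as the paper: both reduce the identity to the multiplicativity of the symmetric signature together with an external product pairing on the equivariant symmetric $L$-homology groups (constructed via the ring/symmetric-spectrum structure on $\bfL^{\sym}$ in the Davis--L\"uck $\Or(G)$-spectra setting) that is compatible with assembly and with the map $\lambda^{s,\sym}_n$, and then invoke Theorem~\ref{the:symmetric_signature_determines_symmetric_total_surgery_obstruction}. Like the paper, you leave the precise construction of that pairing and the verification of its compatibility properties as the unproven technical core; the paper likewise omits these details and instead gives a complete argument only in the special case treated in Remark~\ref{the:Special_case_of_Theorem_ref(the:product_formula)}.
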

\begin{proof} The product $G_0 \times G_1$ is a Farrell-Jones group by
Theorem~\ref{the:status_of_the_Full_Farrell-Jones_Conjecture}~%
\eqref{the:status_of_the_Full_Farrell-Jones_Conjecture:inheritance:Passing_to_finite_direct_products}.

The tensor product gives a pairing, see~\cite[Section~8]{Ranicki(1980a)},
\begin{equation}
\otimes \colon L^{n_0}_s(\IZ G_0,v_0) \otimes L^{n_1}_s(\IZ G_1,v_1) \to L^{n_0+n_1}(\IZ[G_0,\times G_1],v).
\label{otimes_on_L_sym}
\end{equation}
Now we claim that there is a pairing
\[
\times \colon H_{n_0}^{G_0}(\widetilde{X_0};\bfL_{\IZ,v_0}^{s,\sym}) \otimes
H_{n_1}^{G_1}(\widetilde{X_1};\bfL_{\IZ,v_1}^{s,\sym}) \to
H_{n_0+ n_1}^{G_0 \times G_1}(\widetilde{X_0 \times X_1};\bfL_{\IZ,v}^{s,\sym})
\]
such that the following diagram commutes
\begin{equation}
\xymatrix@!C=16em{L^{n_0}_s(\IZ G_0,v_0) \otimes L^{n_1}_s(\IZ G_1,v_1) 
\ar[r]^-{\otimes}
&
 L^{n_0+n_1}(\IZ[G_0,\times G_1],v)
\\
H_{n_0}^{G_0}(\widetilde{X_0};\bfL_{\IZ,v_0}^{s,\sym}) \otimes
H_{n_1}^{G_1}(\widetilde{X_1};\bfL_{\IZ,v_1}^{s,\sym}) 
\ar[r]^-{\times} \ar[u]^{\asmb_{n_0}^{\epsilon,\sym}(\widetilde{X_0}) \otimes \asmb_{n_1}^{\epsilon,\sym}(\widetilde{X_1})}
\ar[d]_{\lambda_{n_0}^{s,\sym}(X_0) \otimes \lambda_{n_1}^{s,\sym}(X_1)} 
&
H_{n_0+n_1}^{G_0 \times G_1}(\widetilde{X_0 \times X_1};\bfL_{\IZ,v}^{s,\sym})
\ar[u]_{\asmb_{n_0+n_1}^{\epsilon,\sym}(\widetilde{X_0 \times X_1})}  \ar[d]^{\lambda^{s,\sym}_{n_0 +n_1}(X_0 \times X_1)}
\\
L^0(\IZ) \otimes L^0(\IZ) \ar[r]^-{\otimes} \ar[d]_{h^0 \otimes h^0}^{\cong}
&
L^0(\IZ) \ar[d]^{h^0}_{\cong}
\\
\IZ \otimes \IZ \ar[r]
&
\IZ
}
\label{commutative_diagram_products}
\end{equation}
where the lowermost horizontal arrow is the multiplication on $\IZ$. In order to get this diagram,
one has firstly  to promote  the functor 
\[
\bfL_{\IZ,w}^{\epsilon,\sym}  \colon \Or(G) \to \SPECTRA
\]
to a functor 
\[
\bfL_{\IZ,w}^{\epsilon,\sym}  \colon \Or(G) \to \SPECTRA^{\sym}
\]
to the category $\SPECTRA^{\sym}$ of symmetric spectra. Notice that the advantage of
$\SPECTRA^{\sym}$ in comparison with $\SPECTRA$ is that $\SPECTRA^{\sym}$ has a functorial
smash product $\wedge$. In the second step one has to construct a map of spectra
\[
\bfL_{\IZ,w}^{\epsilon,\sym}(G/H_0)  \wedge \bfL_{\IZ,w}^{\epsilon,\sym}(G/H_1) 
\to \bfL_{\IZ,w}^{\epsilon,\sym}((G \times G)/(H_0 \times H_1)),
\]
which on homotopy groups induces the map
\[
\otimes \colon L^{n_0}_s(\IZ H_0,v_0|_{H_0}) \otimes L^{n_1}_s(\IZ H_1,v_1|_{H_1}) 
\to 
L_s^{n_0+n_1}(\IZ[H_0\times H_1],v|_{H_0 \times H_1})
\]
under the identifications
\begin{eqnarray*}
\pi_k(\bfL_{\IZ,w}^{\epsilon,\sym}(G/H_0)) 
& \cong & 
L^{k}_s(\IZ H_0,v_0|_{H_0});
\\
\pi_k(\bfL_{\IZ,w}^{\epsilon,\sym}(G/H_1)) 
& \cong & 
L^{k}_s(\IZ H_1,v_1|_{H_1});
\\
\pi_k(\bfL_{\IZ,w}^{\epsilon,\sym}(G \times G/H_0 \times H_1)) 
& \cong & 
L^{k}_s(\IZ[H_0 \times H_1],v|_{H_0 \times H_1}),
\end{eqnarray*}
and are natural in $G/H_0$ and $G/H_1$. We omit the details of this construction, see also 
Remark~\ref{the:Special_case_of_Theorem_ref(the:product_formula)}.
Now the claim follows from Theorem~\ref{the:symmetric_signature_determines_symmetric_total_surgery_obstruction}
and the product formula for the symmetric signature, see~\cite[Proposition~8.1~(i)]{Ranicki(1980b)}, 
which says that the pairing~\eqref{otimes_on_L_sym}
sends $\sigma_{G_0}^{s,\sym}(\widetilde{X_0}) \otimes \sigma_{G_1}^{s,\sym}(\widetilde{X_1})$ to
$\sigma_{G_0 \times G_1}^{s,\sym}(\widetilde{X_0 \times X_1})$.

\cite{Laures-McClure(2014)}
\end{proof}

\begin{remark}[Special case of Theorem~\ref{the:product_formula}]%
\label{the:Special_case_of_Theorem_ref(the:product_formula)}
  In the proof of Theorem~\ref{the:product_formula} we have not given
  the details of the proof of the existence of the commutative
  diagram~\eqref{commutative_diagram_products}. We will need Theorem~\ref{the:product_formula} only in the
  special case, where $n_0 = 3$ and $X_1$ is a closed $n$-dimensional manifold
  and then the desired assertion is 
  \[
   s^{\sym}(X_0) = s^{\sym}(X_0 \times X_1).
  \]
  For the reader's convenience we give a direct complete proof in this special case.  We
  have $L^0(\IZ) \cong \IZ$, $L^1(\IZ) \cong \IZ/2$ and $L^i(\IZ) = 0$ for $i = 1,2,$
 see~\cite[Proposition~7.2]{Ranicki(1980a)}.
  The  Atiyah-Hirzebruch spectral sequence shows that the map $\lambda^{\epsilon,\sym}_n(X_0)$
  of~\eqref{lambda_sym} induces an isomorphism
  \[
  \lambda^{\epsilon,\sym}_{n_0}(X_0)\twotor \colon H_{n_0}^{G_0}(\widetilde{X_0};\bfL^{\epsilon,\sym}_{\IZ,w})\twotor 
\to L^0(\IZ)\twotor = L^0(\IZ)
  \]
  since we assume $n_0 = 3$.  We have already shown in
  Theorem~\ref{the:symmetric_signature_determines_symmetric_total_surgery_obstruction} that
  \[
\asmb_{n_0}^{s,\sym}(X_0)\twotor \colon H_{n_0}^{G_0}(\widetilde{X_0};\bfL^{s,\sym}_{\IZ,w})\twotor 
\to  L^{n_0}_{s} (\IZ G_0,v_0)\twotor
\]
is injective and that there is a unique element
$u_0 \in H_n^{G_0}(\widetilde{X_0};\bfL^{s,\sym}_{\IZ,v_0})$ which is mapped to
$1-h^0(s^{\sym}(X_0))$ and to $[\sigma^{s,\sym}_{G_0}(\widetilde{X_0})]_2$ under these maps.
Let $(f_0,\overline{f_0})$ be a normal map from a closed $3$-manifold $M_0$ to $X_0$.  We
have explained in the proof of
Theorem~\ref{the:symmetric_signature_determines_symmetric_total_surgery_obstruction} that
there is an element $u_0'' \in H_{n_0}^G(\widetilde{X_0};\bfL^{s,\sym}_{\IZ,w})$ whose image
under $\asmb_{n_0}^{s,\sym}(X_0)\twotor \colon H_{n_0}^{G_0}(\widetilde{X_0};\bfL^{s,\sym}_{\IZ,w})\twotor
\to  L^{n_0}_{s} (\IZ G_0,v_0)\twotor$ is $\sigma^{s,\sym}_{G_0}(\overline{M_0})$ for the $G_0$-covering $\overline{M_0} \to M_0$
given by the pullback of $\widetilde{X_0} \to X_0$ with $f_0$ and whose image under the
isomorphism
\[
h^0 \circ \lambda^{\epsilon,\sym}_{n_0}(X_0)\twotor \colon H_n^{G_0}(\widetilde{X_0};\bfL^{\epsilon,\sym}_{\IZ,v_0})\twotor 
\xrightarrow{\cong} \IZ
\]
 is $1$. Hence we get
\[
[\sigma_{G_0}^{s,\sym}(\widetilde{X_0})]_2 = (1 - h^0(s^{\sym}(X_0))) \cdot [\sigma_{G_0}^{s,\sym}(\overline{M_0})]_2.
\]
We conclude from the product formula for the symmetric signature, see~\cite[Proposition~8.1~(i)]{Ranicki(1980b)}, 
\begin{eqnarray}
\label{lidhiudrgudhkguh}
\quad \sigma^{s,\sym}_{G_0 \times G_1}(\widetilde{X_0 \times X_1})
& = & 
\sigma^{s,\sym}_{G_0}(\widetilde{X_0}) \otimes \sigma^{s,\sym}_{G_1}(\widetilde{X_1})
\\
& = &
(1 - h^0(s^{\sym}(X_0)) \cdot \sigma^{s,\sym}_{G_0}(\overline{M_0})) \otimes \sigma^{s,\sym}_{G_1}(X_1)
\nonumber \\
& = &
(1-h^0(s^{\sym}(X_0))) \cdot \sigma^{s,\sym}_{G_0 \times G_1}(\overline{M_0} \times \widetilde{X_1}).
\nonumber
\end{eqnarray}
As we have explained in the proof of
Theorem~\ref{the:symmetric_signature_determines_symmetric_total_surgery_obstruction},
there exists a unique element
$u'' \in H_n^{G_0 \times G_1}(\widetilde{X_0} \times \widetilde{X_1}
;\bfL^{s,\sym}_{\IZ,w})\twotor$, whose image under
\begin{multline*}
\asmb^{s,\sym}_{n_0 +n_1}(X_0 \times X_1)\twotor 
\colon H_{n_0+n_1}^{G_0 \times G_1}(\widetilde{X_0 \times X_1};\bfL^{\epsilon,\sym}_{\IZ,w})\twotor 
\\
\to L^{n_0+n_1}_{s} (\IZ[G_0 \times G_1],v_0 \times v_1)\twotor
\end{multline*}
is 
$[\sigma^{s,\sym}_{G_0 \times G_1}(\overline{M_0} \times \widetilde{X_1})]_2$ and whose image under 
\[h^0 \circ \lambda^{\epsilon,\sym}_n(X_0 \times X_1)\twotor \colon 
H_n^{G_0 \times G_1}(\widetilde{X_0 \times X_1};\bfL^{\epsilon,\sym}_{\IZ,w})\twotor 
\to \IZ
\]
is $1$. Here we use that $X_1$ and hence $M_0 \times X_1$ is a closed manifold.
Theorem~\ref{the:symmetric_signature_determines_symmetric_total_surgery_obstruction} 
together with~\eqref{lidhiudrgudhkguh} implies
\[
 (1-h^0(s^{\sym}(X_0 \times X_1))) = 1-h^0(s^{\sym}(X_0))).
\]
Hence we get $s^{\sym}(X_0) = s^{\sym}(X_0 \times X_1)$.
\end{remark}


\subsection{Proof of Theorem~\ref{the:Poincare_duality_groups_in_dimension_three}}

\begin{proof}[Proof of Theorem~\ref{the:Poincare_duality_groups_in_dimension_three}]
  Recall from Subsection~\ref{subsec:Basic_facts_about_Poincare_duality_groups}
  that there is a finite $3$-dimensional Poincar\'e complex model $X$ for $BG$.
  Also recall  from Theorem~\ref{the:existence_of_a_normal_map} that $\caln(BG)$ is non-empty.
  The implication $\ref{the:Poincare_duality_groups_in_dimension_three:all} 
  \implies\ref{the:Poincare_duality_groups_in_dimension_three:one}$ is obviously true;
  the implication $\ref{the:Poincare_duality_groups_in_dimension_three:one} 
  \implies\ref{the:Poincare_duality_groups_in_dimension_three:all}$
  is proved as follows.
 
  By assumption there are closed aspherical topological manifolds $M_0$ and $N_0$ and a
  homotopy equivalence $f_0 \colon M_0 \to X \times N_0$. We conclude from
  Theorem~\ref{the:s(X)_is_homotopy_invariant} that
  $s^{\sym}(M_0) = s^{\sym}(X \times N_0)$. Since $M_0$ and $N_0$ are 
  closed aspherical topological manifolds, $s^{\sym}(M_0)$ and $s^{\sym}(N_0)$ 
  vanish by  Theorem~\ref{the:symmetric_total_surgery_obstruction}~%
\eqref{the:symmetric_total_surgery_obstruction:vanishing}.
   We conclude from Theorem~\ref{the:product_formula}, or
  just from Remark~\ref{the:Special_case_of_Theorem_ref(the:product_formula)}, that
  $s^{\sym}(X) = 0$.  From Theorem~\ref{the:symmetric_total_surgery_obstruction}~%
\eqref{the:symmetric_total_surgery_obstruction:normal}  we
  obtain a normal map of degree one $(f,\overline{f})$ with target $X$ and vanishing
  simple surgery obstruction $\sigma^s(f,\overline{f}) \in L_3^s(\IZ G,w_1(X))$.  Let $N$ be a
  closed smooth manifold, closed PL-manifold, or closed topological manifold respectively
  of dimension $\ge 2$.  By the product formula for the surgery obstruction,
  see~\cite[Proposition~8.1(ii)]{Ranicki(1980b)}, the surgery obstruction of the normal map of
  degree one $(f \times \id_{N}, \overline{f} \times \id_{TN})$ obtained by crossing
  $(f,\overline{f})$ with $N$ is trivial. Since the dimension of $X \times N$ is greater
  or equal to $5$, we can do surgery in the smooth, PL, or topological category
  respectively to arrange that $f \times \id_{N}$ is a simple homotopy equivalence with a
  closed smooth manifold, closed PL-manifold, or closed topological manifold respectively
  as source.
\end{proof}


\section{Short review of $\ENR$ homology manifolds}
\label{sec:Short_review_of_ENR-homology_manifolds}

A topological space $X$ is called a
\emph{Euclidean neighborhood retract} 
or briefly an $\ENR$ if it $X$ is homeomorphic to a closed subset $X'$ of some Euclidean space $\mathbb R^n$
such that $X'$ has an open neighborhood $U$ in $\mathbb R^n$ that retracts to $X$. Such a space is finite-dimensional, metrizable, separable, locally compact, and locally contractible. It is an illuminating exercise using the Tietze Extension Theorem to show that if such an $X$ is embedded as a closed subset of any normal space, then $X$ is a neighborhood retract in that space. 

A theorem of Borsuk says that every finite-dimensional, metrizable, separable, locally compact, and locally contractible space $X$ is an $\ENR$. The one-point compactification of such an $X$ is finite-dimensional and therefore embeds in a finite-dimensional sphere. Removing the point at infinity from both the one-point compactification and the sphere yields a closed embedding of $X$ into a Euclidean space. In Theorem A7 of \cite{Hatcher(2005)} a neighborhood retraction is constructed for compact $X$. The argument given extends easily to noncompact $X$. Hatcher focuses on the compact case in order to emphasize that compact $\ENR$s have finitely presented fundamental groups and finitely generated homology groups.


\begin{definition} [$\ENR$ homology manifold]\label{def:homology-ENR-manifold}
  A  \emph{$n$-dimensional $\ENR$ homology mani\-fold $X$ (without boundary)} is an $\ENR$  such that 
 for every $x \in X$ the $i$-th singular homology group 
        $H_i(X,X-\{x\})$ is trivial
        for $i \not= n$ and infinite cyclic for $i = n$. We call $X$ \emph{closed} if it is compact.
\end{definition}

An $\ENR$ homology manifold in the sense of 
Definition~\ref{def:homology-ENR-manifold}
is the same as a generalized manifold in the sense of
Daverman~\cite[page~191]{Daverman(1986)}, as pointed out 
in~\cite[page~3]{Bartels-Lueck-Weinberger(2010)}.
Every closed $n$-dimensional topological manifold is 
a closed $n$-dimensional $\ENR$ homology manifold
(see~\cite[Corollary 1A in V.26 page~191]{Daverman(1986)}). 

\begin{definition}[DDP]\label{def:DDP} An $\ENR$ homology manifold $M$ is said to have the 
\emph{disjoint disk property (DDP)}, if for one (and hence any) choice of metric on $M$,
any $\epsilon > 0$
and any  maps $f, g \colon D^2 \to M$, there are maps
$f', g' \colon D^2 \to M$ so that $f'$ is $\epsilon$-close to $f$,
$g'$ is $\epsilon$-close to $g$ and $f'(D^2) \cap g'(D^2) = \emptyset$.
\end{definition}

\begin{definition} [$\ENR$ homology manifold with $\ENR$ boundary]\label{def:homology-ENR-manifold_with_boundary}
   An \emph{$n$-dimensional $\ENR$ homology manifold $X$ with boundary
  $\partial X$} is an $\ENR$ $X$ which is a
  disjoint union $X = \inti X \cup \partial X$, where
  \begin{itemize}
  \item $\inti X$ is an $n$-dimensional $\ENR$ homology manifold, the "interior" of the homology manifold with boundary $X$;
  \item $\partial X$ is an $(n-1)$-dimensional $\ENR$ homology manifold;
  \item for every $z \in \partial X$ the singular homology group
        $H_i(X, X \setminus \{ z \})$ vanishes for all $i$. 
  \end{itemize}
\end{definition}

This definition is rather general. It includes the ``bad'' closed complementary domain of an Alexander Horned sphere embedded in $S^3$. In our main application, however, the boundary will be a Z-set (see below) in $X$.


\section{A stable $\ENR$-version of the Cannon Conjecture}
\label{sec:stable_version_of_the_Cannon_Conjecture}

\begin{theorem}[Stable $\ENR$-version of the Cannon Conjecture]%
\label{con:Stable_ENR-version_of_the_Cannon_Conjecture}
  Let $G$ be a torsion free hyperbolic group. Suppose that its
  boundary is homeomorphic to $S^{n-1}$.  Let $\Gamma$ be any $d$-dimensional Poincar\'e
  group for some natural number $d$ satisfying $n + d \ge 6$ which is a Farrell-Jones group.

  Then there is a closed aspherical $\ENR$ homology manifold $X$ of dimension $n+d$ which
  has the DDP and satisfies $\pi_1(X) \cong G \times \Gamma$.
\end{theorem}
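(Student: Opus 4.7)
The plan is to invoke Theorem~\ref{the:FJ_and_Borel-existence} with the product group $G \times \Gamma$, since that theorem already packages precisely the passage from Poincar\'e duality to a closed aspherical ENR homology manifold in dimensions $\ge 6$. The work therefore consists in verifying that $G \times \Gamma$ satisfies all the hypotheses of that theorem, and then reading off the DDP refinement from the same theorem.

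First I would check that $G \times \Gamma$ is a finitely presented torsion-free Farrell-Jones group. Torsion-freeness and finite presentation pass to direct products from the factors: $G$ is torsion-free by hypothesis and finitely presented because hyperbolic, while $\Gamma$ is torsion-free since every Poincar\'e duality group is (Section~\ref{subsec:Basic_facts_about_Poincare_duality_groups}) and finitely presented as part of the meaning of ``Poincar\'e group'' here. The Farrell-Jones property for $G \times \Gamma$ follows from Theorem~\ref{the:status_of_the_Full_Farrell-Jones_Conjecture}~\eqref{the:status_of_the_Full_Farrell-Jones_Conjecture:Classes_of_groups:hyperbolic_groups} (hyperbolic groups are Farrell-Jones) together with the closure property~\eqref{the:status_of_the_Full_Farrell-Jones_Conjecture:inheritance:Passing_to_finite_direct_products} under finite direct products.

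Next I would establish that $G \times \Gamma$ is an $(n+d)$-dimensional Poincar\'e duality group. The factor $G$ itself is an $n$-dimensional Poincar\'e duality group by the Bestvina-Mess argument, already cited for $n=3$ in Subsection~\ref{subsec:Cannon_from_main_Theorem} and proceeding in general from the identification $\partial G \cong S^{n-1}$. A standard K\"unneth-type computation, obtained by tensoring a finite projective $\IZ G$-resolution of $\IZ$ with one for $\IZ \Gamma$, yields
\[
H^k(G \times \Gamma;\IZ[G \times \Gamma]) \cong \bigoplus_{i+j=k} H^i(G;\IZ G) \otimes_{\IZ} H^j(\Gamma;\IZ \Gamma),
\]
which collapses to $\IZ$ in degree $n+d$ and to $0$ in all other degrees, since the cohomology of each factor in its own group ring is concentrated in one degree with value $\IZ$. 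The same tensor product exhibits type FP for the product.

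With all hypotheses verified and $n+d \ge 6$, Theorem~\ref{the:FJ_and_Borel-existence}~\eqref{the:FJ_and_Borel-existence:ex} supplies a closed ENR homology manifold $X$ of dimension $n+d$ homotopy equivalent to $B(G \times \Gamma) \simeq BG \times B\Gamma$; this $X$ is automatically aspherical with $\pi_1(X) \cong G \times \Gamma$, and the DDP refinement is arranged via assertion~\eqref{the:FJ_and_Borel-existence:DDP}. The main obstacle in the argument is not local to the present statement at all; it is absorbed into Theorem~\ref{the:FJ_and_Borel-existence}, which rests on the surgery theory for ENR homology manifolds of Bryant-Ferry-Mio-Weinberger and on the consequences of the full Farrell-Jones conjecture. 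At the level of this excerpt, the only genuine piece of bookkeeping is the Poincar\'e duality of the product, and that is routine precisely because $H^*(G;\IZ G)$ is concentrated in a single degree with free abelian value.
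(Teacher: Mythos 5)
Your proposal is essentially the paper's own proof: both reduce the statement to Theorem~\ref{the:FJ_and_Borel-existence} by observing that $G \times \Gamma$ is a torsion-free Farrell-Jones group via Theorem~\ref{the:status_of_the_Full_Farrell-Jones_Conjecture}, that $G$ is an $n$-dimensional Poincar\'e duality group by Bestvina--Mess, and hence that $G \times \Gamma$ is an $(n+d)$-dimensional Poincar\'e duality group with $n+d \ge 6$. The only difference is cosmetic: you spell out the K\"unneth computation and the finite-presentation bookkeeping needed to invoke Theorem~\ref{the:FJ_and_Borel-existence}, whereas the paper asserts the product Poincar\'e duality property without detail.
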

\begin{proof} We conclude that $G \times \Gamma$ is a Farrell-Jones group from
  Theorem~\ref{the:status_of_the_Full_Farrell-Jones_Conjecture}~%
\eqref{the:status_of_the_Full_Farrell-Jones_Conjecture:Classes_of_groups:hyperbolic_groups}
  and~\eqref{the:status_of_the_Full_Farrell-Jones_Conjecture:inheritance:Passing_to_finite_direct_products}. 
 Since   $G$ is a Poincar\'e duality group of dimension $n$
  by~\cite[Corollary~1.3]{Bestvina-Mess(1991)}, the product $G \times \Gamma$ is a
  Poincar\'e duality group of dimension $n + d$. Since by assumption $n + d \ge 6$, we
  can apply Theorem~\ref{the:FJ_and_Borel-existence}.
\end{proof}


\section{Short review of Quinn's obstruction}
\label{sec:Short_review_of_Quinns_obstruction}

In order to replace $\ENR$ homology manifolds by topological
manifolds in the above result, we will use the following result that combines
work of Edwards and Quinn,  see~\cite[Theorems~3 and~4 on page~288]{Daverman(1986)}, 
\cite{Quinn(1987_resolution)}.

\begin{theorem}[Quinn's obstruction]\label{the:Quinn-obstruction}
  Let $X$ be a connected $\ENR$ homology manifold. There is an invariant $\iota(X) \in 1 + 8 \IZ$, known as the \emph{Quinn obstruction}, with the following properties:
  
  \begin{enumerate}
  \item\label{the:Quinn-obstruction:local}
        If $U \subset X$ is a connected non-empty  open subset, then 
        $\iota(U) = \iota(X)$;
  \item\label{the:Quinn-obstruction:manifold}
        Let $X$ be an $\ENR$ homology manifold of dimension $\geq 5$.
        Then the following are equivalent:
        \begin{itemize}
        \item $X$ has the DDP and $\iota(X) = 1$;
        \item $X$ is a topological manifold.
        \end{itemize}        
  \end{enumerate}
\end{theorem}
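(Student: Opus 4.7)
The plan is to define $\iota(X)$ through controlled surgery theory, then derive assertion~\eqref{the:Quinn-obstruction:local} from the inherent locality of the construction and assertion~\eqref{the:Quinn-obstruction:manifold} by combining Quinn's resolution theorem with Edwards's cell-like approximation theorem. More concretely, the local Poincar\'e duality structure on the $\ENR$ homology manifold $X$ assembles, via a controlled symmetric signature construction, into a canonical class in $H_0(X;\bfL)$, where $\bfL$ denotes a connective version of Ranicki's symmetric $L$-theory spectrum satisfying $\pi_0(\bfL) = L^0(\IZ)$. For connected $X$, the augmentation $H_0(X;\bfL)\to L^0(\IZ)\cong\IZ$ assigns this class an integer $k$, and one sets $\iota(X)=1+8k$. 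The factor of $8$ reflects the fact, used already before~\eqref{h_0}, that the symmetrization map $\sym_0 \colon L_0(\IZ)\to L^0(\IZ)$ is multiplication by $8$; the normalization $\iota(X)\in 1+8\IZ$ is engineered so that $\iota(X)=1$ corresponds precisely to the algebraic class being realized by a resolution.

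Assertion~\eqref{the:Quinn-obstruction:local} then follows immediately: any connected non-empty open $U\subseteq X$ inherits the local Poincar\'e structure, and its controlled symmetric signature class in $H_0(U;\bfL)$ is the restriction of the class on $X$. Since the augmentation to $L^0(\IZ)$ only remembers the stalk at one point, the integer it extracts agrees for $U$ and $X$, so $\iota(U)=\iota(X)$.

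For assertion~\eqref{the:Quinn-obstruction:manifold}, the easy direction is that a topological $n$-manifold with $n\ge 5$ satisfies DDP by general position and that a direct computation on $\IR^n$ combined with~\eqref{the:Quinn-obstruction:local} yields $\iota=1$. The nontrivial direction is that DDP together with $\iota(X)=1$ implies $X$ is a topological manifold. Here the key input is Quinn's resolution theorem: the vanishing of the Quinn obstruction provides a cell-like map $f\colon M\to X$ from a topological manifold $M$, built inductively via controlled surgery over $X$ with the controlled surgery obstruction at each stage detected by the class constructed above. Once such an $f$ exists, Edwards's cell-like approximation theorem finishes the proof, since cell-like maps from topological $n$-manifolds to $\ENR$ homology manifolds with DDP in dimension $\ge 5$ can be uniformly approximated by homeomorphisms; thus $X\cong M$ is a topological manifold.

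The main obstacle is Quinn's resolution theorem itself. Its proof requires controlled surgery theory directly over the (possibly non-polyhedral) space $X$, together with Quinn's thin $h$-cobordism and end theorems, and the passage from the vanishing of an algebraic controlled invariant to the geometric construction of a cell-like resolution is delicate. The $1+8\IZ$ bookkeeping is forced by $\sym_0$ being multiplication by $8$; everything downstream of Quinn's theorem is either formal or a direct appeal to Edwards's approximation theorem as a black box.
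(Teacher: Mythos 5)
The paper gives no proof of this theorem; it simply cites Quinn~\cite{Quinn(1987_resolution)} for the construction of the local index and its resolution-theoretic interpretation, and Daverman~\cite[Theorems~3 and~4 on page~288]{Daverman(1986)} for Edwards's cell-like approximation theorem. Your outline correctly identifies exactly those two inputs as the content of the statement: Quinn's theorem turns $\iota(X)=1$ into a cell-like map $M\to X$ from a topological manifold, and Edwards's theorem promotes this cell-like map to a homeomorphism once $X$ has the DDP in dimension $\ge 5$. The locality of the index, assertion~\eqref{the:Quinn-obstruction:local}, is likewise what Quinn establishes, and your explanation via the stalk of the controlled symmetric signature class is a faithful account of it. One small caveat: the framing of the index as a lift-to-the-$1$-connective-cover obstruction inside $H_n(X;\bfL^{\sym})\cong H^0(X;\bfL^{\sym})$, with the factor of $8$ coming from the symmetrization $L_0(\IZ)\to L^0(\IZ)$, is the Ranicki/Bryant--Ferry--Mio--Weinberger reformulation rather than Quinn's original controlled end-theory definition. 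This is a legitimate modern gloss and is consistent with the identity $i(X)=8\cdot h_0(s(B))+1$ that the paper records in Theorem~\ref{the:Relating_the_total_surgery_obstruction_and_Quinn's_obstruction}, but it should be understood as an after-the-fact reinterpretation, not the route Quinn himself takes to define $\iota$ and prove~\eqref{the:Quinn-obstruction:local}.
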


The elementary proof of the following result can be found in~\cite[Corollary~1.6]{Bartels-Lueck-Weinberger(2010)}.

\begin{lemma}\label{lem:Quinn_obstruction_and_boundary}
  Let $X$ be a connected $\ENR$ homology manifold with boundary $\partial X$.
  If $\partial  X$ is a manifold and $\dim(X) \ge 5$, then $\iota(\inti X) = 1$.  
\end{lemma}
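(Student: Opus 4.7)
The strategy is to invoke the locality clause
Theorem~\ref{the:Quinn-obstruction}~\eqref{the:Quinn-obstruction:local}:
it suffices to exhibit, in each component of $\inti X$, a single connected
non-empty open subset $U$ with $\iota(U)=1$. A convenient way to guarantee
$\iota(U)=1$ is to arrange that $U$ itself be a topological $n$-manifold.
Indeed topological manifolds of dimension $\ge 5$ automatically satisfy the
DDP, so Theorem~\ref{the:Quinn-obstruction}~\eqref{the:Quinn-obstruction:manifold}
applied to $U$ forces $\iota(U)=1$, and locality then propagates this to the
whole component.

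To produce such a $U$, the plan is to work locally near $\partial X$. First
note that each component of $\inti X$ has a point of $\partial X$ in its
closure, since $X$ is connected and $\partial X$ is non-empty: otherwise a
component $C$ of $\inti X$ would satisfy $\overline{C}=C$ in $X$ and would be
both open and closed, contradicting connectedness. Pick such a boundary point
$z$ for the chosen component. Because $\partial X$ is a topological
$(n-1)$-manifold, $z$ admits a chart $V \subseteq \partial X$ homeomorphic to
$\IR^{n-1}$. The local-homology vanishing
$H_\ast(X, X\setminus\{z\})=0$ imposed in
Definition~\ref{def:homology-ENR-manifold_with_boundary}, combined with the
manifold structure on $\partial X$ near $z$, places us in the setting of the
standard collaring theorem for $\ENR$ homology manifolds with manifold
boundary: one obtains an open embedding
$\varphi \colon V \times [0,1) \hookrightarrow X$ with $\varphi(v,0)=v$ for
every $v \in V$.

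Setting $U := \varphi\bigl(V \times (0,1)\bigr)$ then yields an open subset
of $\inti X$ that lies in the chosen component and is homeomorphic to
$\IR^{n-1} \times (0,1) \cong \IR^n$, and hence is a topological
$n$-manifold. Since $n \ge 5$, we conclude $\iota(U)=1$ as above, and
locality gives $\iota(\inti X)=1$ on each component, proving the lemma. The
main obstacle is the collaring step; once that standard ingredient is
granted, the remainder is a direct application of the locality and
manifold-recognition clauses of Theorem~\ref{the:Quinn-obstruction}.
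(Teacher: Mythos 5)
Your opening move---reduce by locality (Theorem~\ref{the:Quinn-obstruction}~\eqref{the:Quinn-obstruction:local}) to exhibiting a single connected open manifold chart in $\inti X$---is a natural idea, but the collaring step on which the whole argument rests is unjustified and cannot hold in the generality of the lemma. Definition~\ref{def:homology-ENR-manifold_with_boundary} only requires that the local homology $H_*(X,X\setminus\{z\})$ vanish at boundary points; it imposes no local flatness, $1$-LCC, or $Z$-set hypothesis on the inclusion $\partial X \hookrightarrow X$. The paper flags this explicitly right after the definition: the notion includes the bad closed complementary domain of an Alexander Horned sphere in $S^3$, and analogous wildly embedded manifold boundaries exist in all dimensions $\ge 5$. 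A wildly embedded boundary need not admit any open embedding $\varphi\colon V\times[0,1)\hookrightarrow X$ with $\varphi(v,0)=v$, so there is no ``standard collaring theorem for $\ENR$ homology manifolds with manifold boundary'' of the kind your argument requires.

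The collaring result that the paper does invoke (Ferry--Seebeck, \cite[Theorem~1 in Section~40 on page~285]{Daverman(1986)}) appears in Section~\ref{sec:Proof_of_the_main_theorem} only \emph{after} the present lemma and Theorem~\ref{the:Quinn-obstruction}~\eqref{the:Quinn-obstruction:manifold} have been used to show that the interior is already a topological manifold, and it additionally relies on the boundary there being a $Z$-set and hence $1$-LCC (see the footnote in that section). Producing the manifold chart via a collar is therefore circular: the collar is a consequence of the conclusion you are trying to prove, not an available input. The paper does not give an internal proof of the lemma; it cites the elementary argument of \cite[Corollary~1.6]{Bartels-Lueck-Weinberger(2010)}, which works without presupposing a collar and must accommodate wild boundaries.
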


Although we do not need the next result in this paper,
we mention that it follows from~\cite[Proposition~25.8 on page~293]{Ranicki(1992)} using 
Theorem~\ref{the:symmetric_signature_determines_symmetric_total_surgery_obstruction},
since we assume aspherical. 

\begin{theorem}[Relating the total surgery obstruction and Quinn's obstruction]%
\label{the:Relating_the_total_surgery_obstruction_and_Quinn's_obstruction}
  Let $B$ be an aspherical finite $n$-dimensional Poincar\'e complex which is homotopy
  equivalent to an $n$-dimensional closed $\ENR$ homology manifold $X$. Suppose that $\pi_1(B)$ is a
  Farrell-Jones group.

   Then we get
  \[
    i(X) = 8 \cdot h_0(s(B)) +1  = h^0(s^{\sym}(B)) +1.
    \]
  \end{theorem}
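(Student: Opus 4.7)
The second equality $8 \cdot h_0(s(B)) + 1 = h^0(s^{\sym}(B)) + 1$ is a formal consequence of the definition $s^{\sym}(B) = \sym_0(s(B))$ given in~\eqref{symmetric_total_surgery_obstruction}, combined with the identification of $\sym_0 \colon L_0(\IZ) \to L^0(\IZ)$ with $8 \cdot \id \colon \IZ \to \IZ$ under $h_0$ and $h^0$ coming from the proof of~\cite[Proposition~8.2]{Ranicki(1980a)}. So the substance of the theorem lies in the first equality.

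For the first equality $i(X) = 8 \cdot h_0(s(B)) + 1$, my plan is to invoke~\cite[Proposition~25.8 on page~293]{Ranicki(1992)}. For a closed $n$-dimensional $\ENR$ homology manifold $X$, this result expresses the Quinn obstruction as $i(X) = 8 \cdot h_0(s_{\text{Ra}}(X)) + 1$, where $s_{\text{Ra}}(X) \in L_0(\IZ)$ is the canonical $L_0(\IZ)$-component of Ranicki's total surgery obstruction, extracted from the algebraic surgery exact sequence of $X$. The remaining task is then to identify $s_{\text{Ra}}(X)$ with our $s(B)$.

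This identification is where the aspherical and Farrell-Jones hypotheses are used. By Theorem~\ref{the:symmetric_signature_determines_symmetric_total_surgery_obstruction}, the class $s^{\sym}(B)$, and hence $s(B)$ via the injectivity of $\sym_0$, is uniquely characterized by a property derived from the symmetric signature $\sigma_{\pi_1(B)}^{s,\sym}(\widetilde{B})$. Ranicki's invariant $s_{\text{Ra}}(B)$ is extracted from the same symmetric signature via the algebraic surgery exact sequence, and the Farrell-Jones hypothesis forces the relevant symmetric assembly map to be an isomorphism after inverting $2$, which is enough to conclude that both constructions yield the same element of the torsion-free group $L_0(\IZ) = \IZ$. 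Homotopy invariance (Theorem~\ref{the:s(X)_is_homotopy_invariant}) then gives $s(B) = s(X) = s_{\text{Ra}}(X)$, since $B$ and $X$ are homotopy equivalent and $X$ is homotopy equivalent to a finite $CW$-complex by West's theorem for compact $\ENR$s.

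The main technical hurdle is precisely this bookkeeping step: our invariant is constructed via Davis-L\"uck style assembly over the orbit category of $\pi_1(B)$, whereas Ranicki works directly with the algebraic surgery exact sequence built from the cellular chain complex of the universal cover. Theorem~\ref{the:symmetric_signature_determines_symmetric_total_surgery_obstruction} serves as the bridge between the two frameworks, by pinning down $s(B)$ through a universal property of the symmetric signature that Ranicki's invariant also satisfies.
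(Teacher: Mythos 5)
Your proposal follows exactly the same route as the paper, whose proof is a single sentence: the result "follows from~\cite[Proposition~25.8 on page~293]{Ranicki(1992)} using Theorem~\ref{the:symmetric_signature_determines_symmetric_total_surgery_obstruction}, since we assume aspherical." You correctly identify the two ingredients (Ranicki's Proposition~25.8 relating the Quinn index to his $L_0(\IZ)$-valued component of the total surgery obstruction, and Theorem~\ref{the:symmetric_signature_determines_symmetric_total_surgery_obstruction} as the bridge pinning down $s(B)$ via the symmetric signature), and you correctly dispose of the second equality as a formal consequence of $\sym_0$ becoming multiplication by $8$ under $h_0$ and $h^0$. Your extra remarks — that one uses homotopy invariance (Theorem~\ref{the:s(X)_is_homotopy_invariant}) and West's theorem to pass between $B$ and a finite model of $X$, and that the Farrell–Jones hypothesis makes the symmetric assembly map an isomorphism after inverting $2$ so that agreement is detected in the torsion-free group $L_0(\IZ)$ — are the right supporting facts for the identification step, even though the paper leaves that bookkeeping implicit.
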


  Notice that in the situation of
  Theorem~\ref{the:Relating_the_total_surgery_obstruction_and_Quinn's_obstruction} the
  total surgery obstruction $s(B)$ is defined without the assumption that $B$ is homotopy
  equivalent to an $n$-dimensional closed $\ENR$ homology manifold and therefore does make sense
  for any aspherical $3$-dimensional Poincar\'e complex, and moreover, that
  $s(B)$ is a homotopy invariant, see Theorem~\ref{the:s(X)_is_homotopy_invariant}.

\begin{remark}\label{rem_Quinn_for_aspherical}
There is no example in the literature of a closed spherical $\ENR$ homology manifold
which is not homotopy equivalent to a closed topological manifold. 
\end{remark}


\section{$Z$-sets}
\label{sec:Z-sets}

\begin{definition}[Z-set]\label{def:Z-set}
  A closed subset $Z$ of a compact $\ENR$ $X$ is called a \emph{$Z$-set or a set of
    infinite deficiency} if for every open subset $U$ of $X$ the inclusion
  $U \setminus (U \cap Z) \to U$ is a homotopy equivalence.
\end{definition}

Any closed subset of the boundary $\partial M$ of a compact topological manifold $M$ is a
$Z$-set in $M$.  According to~\cite[page~470]{Bestvina-Mess(1991)} each of the following
properties characterizes $Z$-sets. Here, $X$ is a compact metric $\ENR$. The noncompact case is similar, except that maps and homotopies are limited by arbitrary open covers rather than by fixed $\epsilon\textrm{'s}$.

\begin{enumerate}

\item For every $\epsilon > 0$ there is a map $X \to X \setminus Z$ which is
  $\epsilon$-close to the identity.

\item For every closed subset $A \subseteq Z$, there exists a homotopy
  $H \colon X \times [0,1] \to X$ such that $H_0 = \id_X$, $H_t|_A$ is the inclusion
  $A \to X$ and $H_t(X \setminus A) \subseteq X \setminus Z$ for all $t > 0$.

\noindent To this, we will add:

\item  There exists a homotopy
  $H \colon X \times [0,1] \to X$ such that $H_0 = \id_X$ and $H_t(X ) \subseteq X \setminus Z$ for all $t > 0$.
\end{enumerate}

This last is (2) with $A=\emptyset$.  Clearly, (3) implies (1) and (2) implies (3), so (3) also suffices as a definition of $Z$-set. This is the definition we will use in what follows. Condition (3) implies that for every open $U \subset X$ the inclusion $U\setminus Z \to U$ is a homotopy equivalence. If $\alpha:S^k \to U$ is a map, then $H_t \circ \alpha :S^k \to X$ is a map homotopic to $\alpha$ and for $t>0$ its image lies in $X\setminus Z$. For $t>0$ sufficiently small, this homotopy takes place in $U$, so the inclusion-induced map $\pi_k(U \setminus Z) \to \pi_k(U)$ is surjective. A similar argument shows that $\pi_k(U \setminus Z) \to \pi_k(U)$ is a monomorphism -- if $\alpha$ extends over a disk in $U$, push the disk off of Z. The homotopy equivalence follows from the Whitehead Theorem, since $\ENR's$ have the homotopy types of CW complexes.
The next result is taken from~\cite[Proposition~2.5]{Bartels-Lueck-Weinberger(2010)}.

\begin{lemma}\label{lem:Z-set-yields-homology-mfd-with-boundary}
  Let $X$ be an $\ENR$ 
  which is the disjoint union of an $n$-dimensional 
  $\ENR$ homology manifold $\inti X$ and an $(n-1)$-dimensional 
  $\ENR$ homology manifold $\partial  X$ such that $\partial  X$ is a 
  $Z$-set in $X$.
  Then $X$ is an $\ENR$ homology manifold with boundary $\partial X$.
\end{lemma}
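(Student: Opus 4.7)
My plan is to verify the only nontrivial clause of Definition~\ref{def:homology-ENR-manifold_with_boundary}: that $H_i(X, X \setminus \{z\}) = 0$ for every $z \in \partial X$ and every $i \ge 0$. The other two bullets, saying that $\inti X$ and $\partial X$ are homology manifolds of the asserted dimensions, are built into the hypotheses. The approach is to localize by excision and then exploit the $Z$-set property twice: once inside $X$ and once inside a slightly smaller open set.

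First I would pick a contractible open neighborhood $U$ of $z$ in $X$, which exists because $X$ is locally contractible as an $\ENR$. Since $X$ is Hausdorff and $\{z\}$ is closed, standard excision (excising the closed set $X \setminus U$, which lies in $X \setminus \{z\}$ because $z \in U$) gives $H_i(X, X \setminus \{z\}) \cong H_i(U, U \setminus \{z\})$, so it suffices to kill the relative homology of this local pair in every degree.

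Next I would apply the $Z$-set hypothesis of Definition~\ref{def:Z-set} to the open set $U$ itself, obtaining that the inclusion $U \setminus \partial X \hookrightarrow U$ is a homotopy equivalence, so $U \setminus \partial X$ is contractible. Then I would apply the same hypothesis to the open set $V := U \setminus \{z\}$, which is open in $X$ because $\{z\}$ is closed; this yields that $V \setminus \partial X \hookrightarrow V$ is a homotopy equivalence. The key observation is that $V \setminus \partial X = U \setminus \partial X$, since $z \in \partial X$, so chaining the two homotopy equivalences shows that $U \setminus \{z\} \hookrightarrow U$ is itself a homotopy equivalence. Hence both $U$ and $U \setminus \{z\}$ are contractible; in particular $U \setminus \{z\}$ is nonempty, which can also be seen directly from the fact that a $Z$-set homotopy pushes $z$ into $\inti X \subseteq U \setminus \{z\}$ for small positive time. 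The long exact sequence of the pair $(U, U \setminus \{z\})$ then forces $H_i(U, U \setminus \{z\}) = 0$ for all $i$, including $i = 0$, which finishes the proof. I do not foresee a genuine obstacle; the only point worth checking is that the $Z$-set condition in Definition~\ref{def:Z-set} is quantified over \emph{all} open subsets of $X$, which is precisely what legitimizes applying it to $V = U \setminus \{z\}$ and is the crux of the argument.
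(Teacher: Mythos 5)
Your proof is correct. The essential mechanism is exactly right: apply the open-set characterization of $Z$-sets to both $U$ and $V = U \setminus \{z\}$, observe that $U \setminus \partial X = V \setminus \partial X$ because $z \in \partial X$, and conclude by two-out-of-three that $U \setminus \{z\} \hookrightarrow U$ is a homotopy equivalence, whence the relative homology vanishes. The paper itself supplies no proof for this lemma -- it cites \cite[Proposition~2.5]{Bartels-Lueck-Weinberger(2010)} -- so there is no in-paper argument to compare against, but your argument is a clean, self-contained one.

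One small remark on economy: the excision step and the choice of a \emph{contractible} $U$ are both dispensable. You never actually use contractibility of $U$ or of $U \setminus \partial X$; the long exact sequence of the pair kills $H_i(U, U\setminus\{z\})$ as soon as $U\setminus\{z\} \hookrightarrow U$ is a homotopy equivalence, regardless of the homotopy type of $U$. Consequently you can run the identical two-out-of-three argument with $U = X$ itself (here $X\setminus\{z\}$ is open because $X$, being an $\ENR$, is Hausdorff), obtaining directly that $X\setminus\{z\} \hookrightarrow X$ is a homotopy equivalence and hence $H_i(X, X\setminus\{z\}) = 0$ for all $i$, with no excision needed. Your nonemptiness worry also evaporates: if $U$ is nonempty (it contains $z$), a homotopy equivalence $U\setminus\partial X \to U$ already forces $U\setminus\partial X$, and hence $U\setminus\{z\} \supseteq U\setminus\partial X$, to be nonempty. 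These are stylistic simplifications, not gaps.
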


\begin{definition}[Compact sets become small at infinity]\label{def:Compact_sets_become_small_at_infinity}
  Consider a pair $(\overline{Y},Y)$ of $G$-spaces, $G$ a discrete group. We say that \emph{compact subsets of
    $Y$ become small at infinity}, if, for every $y \in \partial Y := \overline{Y}
  \setminus Y$, open neighborhood $U \subseteq \overline{Y}$ of $y$, and compact subset $K
  \subseteq Y$, there exists an open neighborhood $V \subseteq U$ of $y$ with
  the property that for every $g \in G$ we have the implication $g \cdot K \cap V \not= \emptyset
    \implies g \cdot K \subseteq U$.
\end{definition}

In the sequel we will choose $l$ large enough such that the following claims are true for
the torsion free hyperbolic group $G$  and its Rips complex $P_l(G)$.

\begin{enumerate}

\item The projection $P_l(G) \to P_l(G)/G$ is a model for the universal principal $G$-bundle
$EG \to BG$ and $P_l(G)/G$ is a finite $CW$-complex;

\item One can construct a compact topological space $\partial G$ and a compactification
  $\overline{P_l(G)}$ of $P_l(G)$ such that
  $\partial G = \overline{P_l(G)} \setminus P_l(G)$ holds, and $P_l(G)$ is open and dense in $\overline{P_l(G)}$;

\item $\overline{P_l(G)}$ is a compact metrizable $\ENR$ such that
  $\partial G \subset \overline{P_l(G)}$ is $Z$-set and $\overline{P_l(G)}$ has finite topological dimension;

\item Compact subsets of $Y$ become small at infinity for the pair $(\overline{P_l(G)},P_l(G))$.

\end{enumerate}

The first claim is proved for instance in~\cite{Meintrup-Schick(2002)}.
The second claim follows 
from~\cite[III.H.3.6 on page~429, III.H.3.7(3) and~(4) on page~430, III.H.3.7(4) on page~430 
and III.H.3.18(4) on page~433]{Bridson-Haefliger(1999)}
and~\cite[9.3.(ii)]{Bartels-Lueck-Reich(2008cover)}.
The third claim is due to Bestvina-Mess~\cite[Theorem~1.2]{Bestvina-Mess(1991)}, see 
also~\cite[Theorem~3.7]{Rosenthal-Schuetz(2005)}.
The fourth assertion is for instance proved in~\cite[page~531]{Rosenthal-Schuetz(2005)}.


\section{Pulling back boundaries}
\label{sec:pulling_back_boundaries}

We will need the following construction which may be interesting in its own right.

Let $(\overline{Y},Y)$ be a topological pair. Put $\partial Y := \overline{Y} \setminus Y$. 
Let $X$ be a topological space and $f \colon X \to Y$ be a continuous map. Define a
topological pair $(\overline{X},X)$ and a map $\overline{f} \colon \overline{X} \to X$, which will turn out to be continuous,
as follows.  The underlying set of $\overline{X}$ is the disjoint union $X\amalg  \partial Y$.  
We define the map of sets $\overline{f} \colon \overline{X} \to \overline {Y}$ to be $f \cup \id_{\partial Y}$.  
A subset $W$ of $\overline{X}$ is declared to be open if there exist open subsets $U \subseteq \overline{Y}$ 
and $V \subseteq X$ such that $W = \overline{f}^{-1}(U) \cup V$. We will see that this defines a topology.
Obviously $\overline{X}$ and $\emptyset$ are open. Given a collection of open subsets
$\{W_i \mid i \in I\}$, their union is again open by the following equality, if we write
$W_i = \overline{f}^{-1}(U_i) \cup V_i$ for open subsets $U_i \subset \overline{Y}$ and
$V_i \subseteq X$ and define open subsets $U := \bigcup_{i \in I} U_i \subseteq
\overline{Y}$ and $V := \bigcup_{i \in I} V_i\subseteq X$:
\begin{multline*}
\bigcup_{i \in I} W_i 
=  
\bigcup_{i \in I} \left(\overline{f}^{-1}(U_i) \cup V_i\right)
 = 
\bigcup_{i \in I} \overline{f}^{-1}(U_i) \cup \bigcup_{i \in I}  V_i
\\
= 
\overline{f}^{-1}\left(\bigcup_{i \in I} U_i\right) \cup \bigcup_{i \in I}  V_i
= 
\overline{f}^{-1}(U) \cup V.
\end{multline*}
Given two open subsets $W_1$ and $W_2$, their intersection is again open by the following
equality, if we write $W_i = \overline{f}^{-1}(U_i) \cup V_i$ for open subsets 
$U_i \subset \overline{Y}$ and $V_i \subseteq X$ for $i = 1,2$ and define open subsets 
$U := U_1 \cap U_2 \subseteq \overline{Y}$ and 
$V := \bigl(f^{-1}(U_1\cap Y) \cap V_2\bigr) \cup \bigl(V_1 \cap f^{-1}(U_2 \cap Y)\bigr) \cup \bigl(V_1 \cap V_2)
\subseteq X$:
\begin{eqnarray*}
\lefteqn{W_1 \cap W_2}
& & 
\\
& =  & 
 \bigl(\overline{f}^{-1}(U_1) \cup V_1\bigr) \cap \bigl(\overline{f}^{-1}(U_2) \cup V_2\bigr) 
\\
& = & 
\bigl(\overline{f}^{-1}(U_1) \cap \overline{f}^{-1}(U_2) \bigr) \cup \bigl(\overline{f}^{-1}(U_1) \cap  V_2\bigr)
\cup \bigl(V_1 \cap \overline{f}^{-1}(U_2)\bigr) \cup \bigl(V_1 \cap V_2\bigr)
\\
& = & 
\overline{f}^{-1}(U_1 \cap U_2) \cup
\left(\bigl(f^{-1}(U_1\cap Y) \cap  V_2\bigr)
\cup \bigl(V_1 \cap f^{-1}(U_2 \cap Y)\bigr) \cup \bigl(V_1 \cap V_2\bigr)\right)
\\
& = & 
\overline{f}^{-1}(U) \cup V.
\end{eqnarray*}

\begin{definition}[Pulling back the boundary]\label{def:pulling_back_the_boundary}
We say that $(\overline{f},f) \colon (\overline{X},X) \to (\overline{Y},Y)$ is 
obtained from $(\overline{Y},Y)$ by \emph{pulling back the boundary with $f$}.
\end{definition}

Notice that this is the smallest topology on the set $\overline{X} = X \amalg \partial Y$ 
for which $\overline{f}$ is continuous and $X \subseteq \overline{X}$ is an open subset.
This leads to the following universal property of the construction ``pulling back the boundary''.

\begin{lemma}\label{lem:universal_property_of_pulling_back_boundary} Let
  $(\overline{Y},Y)$ be a topological pair.  Let $X$ be a topological space and 
  $f \colon   X \to Y$ be a continuous map.  Suppose that 
  $(\overline{f},f) \colon (\overline{X},X)   \to (\overline{Y},Y)$ is obtained from $(\overline{Y},Y)$ 
  by pulling back the boundary   with $f$. Consider any pair of spaces $(\overline{\overline{X}},X)$ 
  and map of pairs
  $(\overline{\overline{f}},f) \colon (\overline{\overline{X}},X) \to (\overline{Y},Y)$
  such that $X$ is an open subset of $\overline{\overline{X}}$ and $\overline{\overline{f}}$ 
  induces a map $\overline{\overline{X}} \setminus X  \to \partial Y := \overline{Y} \setminus Y$.

  Then there is precisely one map $u \colon \overline{\overline{X}} \to \overline{X}$
  which induces the identity on $X$ and satisfies $\overline{f} \circ u =  \overline{\overline{f}}$.
\end{lemma}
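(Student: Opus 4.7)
My plan is to check uniqueness first (which forces the definition of $u$ as a map of sets) and then verify continuity directly from the definition of the topology on $\overline{X}$.

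For uniqueness, suppose $u \colon \overline{\overline{X}} \to \overline{X}$ has the required properties. Given $x \in X$, the condition $u|_X = \id_X$ determines $u(x) = x$. Given $z \in \overline{\overline{X}} \setminus X$, the hypothesis on $\overline{\overline{f}}$ gives $\overline{\overline{f}}(z) \in \partial Y$, and since $\overline{f}$ is the identity on $\partial Y \subseteq \overline{X}$ while $u$ must send $z$ into $\partial Y$ (its image cannot lie in $X$ because then $\overline{f}(u(z)) = f(u(z)) \in Y$, contradicting $\overline{\overline{f}}(z) \in \partial Y$), the equation $\overline{f} \circ u = \overline{\overline{f}}$ forces $u(z) = \overline{\overline{f}}(z)$.

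For existence, I define $u$ as a map of sets by $u|_X = \id_X$ and $u(z) = \overline{\overline{f}}(z)$ for $z \in \overline{\overline{X}} \setminus X$; the two compatibility conditions are then immediate. To check continuity, take an arbitrary open set $W \subseteq \overline{X}$ of the form $W = \overline{f}^{-1}(U) \cup V$ with $U \subseteq \overline{Y}$ open and $V \subseteq X$ open. A short computation using $\overline{f} \circ u = \overline{\overline{f}}$ and $u|_X = \id_X$, together with the observation that $u(\overline{\overline{X}} \setminus X) \subseteq \partial Y$ so no point outside $X$ is sent into $V$, gives
\[
u^{-1}(W) = \overline{\overline{f}}^{-1}(U) \cup V.
\]
The set $\overline{\overline{f}}^{-1}(U)$ is open by continuity of $\overline{\overline{f}}$, and $V$ is open in $\overline{\overline{X}}$ because $V$ is open in $X$ and $X$ is an open subset of $\overline{\overline{X}}$ by hypothesis. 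Hence $u^{-1}(W)$ is open.

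There is no real obstacle here: the construction was engineered so that the topology on $\overline{X}$ is the coarsest one making $\overline{f}$ continuous and $X$ open, and the universal property is a direct consequence. The only subtlety worth flagging explicitly in the write-up is the set-theoretic identity $u^{-1}(V) = V$, which relies on $u$ mapping $\overline{\overline{X}} \setminus X$ into $\partial Y$, ensuring that preimages under $u$ of subsets of $X$ do not pick up extra points from the complement.
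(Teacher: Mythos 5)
Your proof is correct and follows essentially the same route as the paper's: define $u$ pointwise by the forced formulas, then verify continuity by pulling back $W = \overline{f}^{-1}(U) \cup V$ and using $\overline{f}\circ u = \overline{\overline{f}}$ together with openness of $X$ in $\overline{\overline{X}}$. Your write-up is marginally more careful than the paper on two points the paper leaves implicit — the argument that $u$ must send $\overline{\overline{X}}\setminus X$ into $\partial Y$ for uniqueness, and the observation that $u^{-1}(V)=V$ because no point outside $X$ maps into $V$ — but these are exactly the checks the paper's one-line ``Then $u^{-1}(W)=\overline{\overline{f}}^{-1}(U)\cup V$'' is silently relying on, so there is no substantive difference in approach.
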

\begin{proof}
  As a map of sets $u$ exists and is uniquely determined by the properties that $u$
  induces the identity on $X$ and $\overline{f} \circ u =
  \overline{\overline{f}}$. Namely, for $x \in X$ define $u(x) = x$ and for $x \in
  \overline{\overline{X}} \setminus X$ define $u(x)$ by 
$\overline{\overline{f}}(x) \in \partial \overline{Y} = \partial \overline{X} \subseteq \overline{X}$. We
  have to show that $u$ is continuous, i.e., $u^{-1}(W) \subseteq \overline{\overline{X}}$
  is open for every open subset $W \subseteq \overline{X}$.  By definition there are open
  subsets $U \subseteq \overline{Y}$ and $V \subseteq X$ such that $W =
  \overline{f}^{-1}(U) \cup V$.  Then $u^{-1}(W) = \overline{\overline{f}}^{-1}(U) \cup
  V$. Since $\overline{\overline{f}}$ is continuous, $\overline{\overline{f}}^{-1}(U)
  \subseteq \overline{\overline{X}}$ is open. Since $X$ is open in
  $\overline{\overline{X}}$ and the topology on $X$ is the subspace topology of $X
  \subseteq \overline{\overline{X}}$, we conclude that for any open subset $V \subseteq X$ the subset $V
  \subseteq \overline{\overline{X}}$ is open. Hence $u^{-1}(W) \subseteq
  \overline{\overline{X}}$ is open.
\end{proof}

\begin{lemma}
\label{lem:properties_of_pulling_back_the_boundary}

Let $(\overline{Y},Y)$ be a topological pair.  Let $X$ be a topological space and
$f \colon X \to Y$ be a continuous map.  Suppose that
$(\overline{f},f) \colon (\overline{X},X) \to (\overline{Y},Y)$ is obtained from
$(\overline{Y},Y)$ by pulling back the boundary with $f$.

\begin{enumerate}

\item\label{lem:properties_of_pulling_back_the_boundary:X_dense} If
  $Y \subseteq \overline{Y}$ is dense and the closure of the image of $f$ in
  $\overline{Y}$ contains $\partial Y$, then $X \subseteq \overline{X}$ is dense;

\item\label{lem:properties_of_pulling_back_the_boundary:overline(X)-compact} Suppose that 
  $\overline{Y}$ is compact, $Y \subseteq \overline{Y}$ is open and $f \colon X \to Y$ is
  proper.  Then $\overline{X}$ is compact;

\item\label{lem:properties_of_pulling_back_the_boundary:dim}
We have for the topological dimension of $\overline{X}$
\[
\dim(\overline{X}) \le \dim(X) + \dim(\overline{Y}) +1;
\]

\item\label{lem:properties_of_pulling_back_the_boundary:overline(f)-continuous}
The map $\overline{f} \colon \overline{X} \to \overline{Y}$ given by $f \cup \id_{\partial Y}$ is continuous;

\item\label{lem:properties_of_pulling_back_the_boundary:homeo_on_boundary}
The induced map $\overline{f}$ induces a homeomorphism $\partial f \colon \partial {X} \to \partial{Y}$;

\item\label{lem:properties_of_pulling_back_the_boundary:functoriality} Let
  $g \colon Z \to X$ be a map.  Suppose that
  $(\overline{f},f) \colon (\overline{X},X) \to (\overline{Y},Y)$ and
  $(\overline{f \circ g},f \circ g) \colon (\overline{Z},Z) \to (\overline{Y},Y)$
  respectively are obtained by pulling back the boundary of $(\overline{Y},Y)$ with $f$
  and $f \circ g$ respectively.  Let
  $\overline{g} \colon (\overline{\overline{Z}},Z) \to (\overline{X},X)$ be obtained by
  pulling back the boundary of $(\overline{X},X)$ with $g$.

Then we get an equality of topological spaces $\overline{\overline{Z}} = \overline{Z}$
and of maps $\overline{f} \circ \overline{g} = \overline{f \circ g}$.

\end{enumerate}
\end{lemma}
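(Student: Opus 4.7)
The plan is to verify each of the six assertions directly from the explicit description of the topology on $\overline{X}$: a subset $W \subseteq \overline{X}$ is open if and only if $W = \overline{f}^{-1}(U) \cup V$ for some open $U \subseteq \overline{Y}$ and some open $V \subseteq X$. I would tackle the parts in the order \eqref{lem:properties_of_pulling_back_the_boundary:overline(f)-continuous}, \eqref{lem:properties_of_pulling_back_the_boundary:homeo_on_boundary}, \eqref{lem:properties_of_pulling_back_the_boundary:functoriality}, \eqref{lem:properties_of_pulling_back_the_boundary:X_dense}, \eqref{lem:properties_of_pulling_back_the_boundary:overline(X)-compact}, saving \eqref{lem:properties_of_pulling_back_the_boundary:dim} for last.

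Part \eqref{lem:properties_of_pulling_back_the_boundary:overline(f)-continuous} is immediate since $\overline{f}^{-1}(U) = \overline{f}^{-1}(U) \cup \emptyset$ is open by the very definition of the topology. For \eqref{lem:properties_of_pulling_back_the_boundary:homeo_on_boundary}, note that $\overline{f}|_{\partial X}$ is set-theoretically the identity onto $\partial Y$, and that the subspace-open sets of $\partial X$ have the form $\overline{f}^{-1}(U) \cap \partial X = U \cap \partial Y$, which are precisely the subspace-open sets of $\partial Y$ in $\overline{Y}$. For \eqref{lem:properties_of_pulling_back_the_boundary:functoriality}, observe that $\overline{Z}$ and $\overline{\overline{Z}}$ have the common underlying set $Z \amalg \partial Y$ (using \eqref{lem:properties_of_pulling_back_the_boundary:homeo_on_boundary}), and that a basic open set $\overline{g}^{-1}(\overline{f}^{-1}(U) \cup V') \cup V$ of $\overline{\overline{Z}}$ rewrites as $(\overline{f \circ g})^{-1}(U) \cup (g^{-1}(V') \cup V)$, which is open in $\overline{Z}$; the reverse inclusion is analogous, and the equality of maps is immediate from the definitions.

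For \eqref{lem:properties_of_pulling_back_the_boundary:X_dense}, take a nonempty open $W = \overline{f}^{-1}(U) \cup V$; if $V \neq \emptyset$ we are done, otherwise $\overline{f}^{-1}(U) = f^{-1}(U \cap Y) \cup (U \cap \partial Y) \neq \emptyset$. If $f^{-1}(U \cap Y) \neq \emptyset$ we immediately obtain a point of $X$ in $W$; if instead $U \cap \partial Y \neq \emptyset$, the hypothesis $\partial Y \subseteq \overline{f(X)}$ gives $U \cap f(X) \neq \emptyset$, again producing $x \in X$ with $f(x) \in U$ and hence $x \in W$. For \eqref{lem:properties_of_pulling_back_the_boundary:overline(X)-compact}, given an open cover $\{W_i = \overline{f}^{-1}(U_i) \cup V_i\}$ of $\overline{X}$, the family $\{U_i\}$ covers the compact set $\partial Y \subseteq \overline{Y}$, so a finite subcollection $U_{j_1}, \ldots, U_{j_m}$ already covers $\partial Y$; then $K := \overline{Y} \setminus \bigcup_{k=1}^m U_{j_k}$ is closed in $\overline{Y}$ and contained in $Y$, hence compact, so by properness of $f$ the preimage $f^{-1}(K) \subseteq X$ is compact and therefore covered by finitely many $V_i$, together yielding a finite subcover of $\overline{X}$.

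I expect the main obstacle to be \eqref{lem:properties_of_pulling_back_the_boundary:dim}. The strategy is to write $\overline{X} = X \cup \partial X$, use \eqref{lem:properties_of_pulling_back_the_boundary:homeo_on_boundary} to get $\dim \partial X = \dim \partial Y \le \dim \overline{Y}$, and invoke the sum theorem $\dim(A \cup B) \le \dim A + \dim B + 1$ for the topological covering dimension. The subtlety is that this sum theorem requires hypotheses such as normality or metrizability on $\overline{X}$, and a priori the topology just constructed on $\overline{X}$ need not satisfy either; one must either verify sufficient separation on $\overline{X}$ directly from the construction, or appeal to a form of the sum theorem applicable at this level of generality (noting that in the intended application $\overline{Y}$ is a compact metric $\ENR$, which should permit a reduction to the metric case).
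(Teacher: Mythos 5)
For parts \eqref{lem:properties_of_pulling_back_the_boundary:X_dense}, \eqref{lem:properties_of_pulling_back_the_boundary:overline(X)-compact}, \eqref{lem:properties_of_pulling_back_the_boundary:overline(f)-continuous}, \eqref{lem:properties_of_pulling_back_the_boundary:homeo_on_boundary}, and \eqref{lem:properties_of_pulling_back_the_boundary:functoriality}, your arguments coincide with the paper's in substance: the same reduction to basic open sets $\overline{f}^{-1}(U)\cup V$, the same finite-subcover argument for compactness using properness, the same computation of the subspace topology on $\partial X$, and the same rewriting identity for functoriality. These are fine.

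Where you genuinely diverge is part \eqref{lem:properties_of_pulling_back_the_boundary:dim}, and here your proposal has a real gap which the paper's proof avoids. You propose to use $\dim\partial X=\dim\partial Y\le\dim\overline{Y}$ together with the abstract sum theorem $\dim(A\cup B)\le\dim A+\dim B+1$, and you correctly flag that both the monotonicity step and the sum theorem need normality-type hypotheses on $\overline{X}$ that the lemma does not supply (and which the freshly constructed topology on $\overline{X}$ need not satisfy). You leave this unresolved, appealing to the eventual application where $\overline{Y}$ is metric. The paper never invokes the sum theorem and never needs any separation axiom on $\overline{X}$: given an arbitrary open cover $\calw=\{W_i\}$ of $\overline{X}$ with $W_i=\overline{f}^{-1}(U_i)\cup V_i$, it separately refines the induced cover $\{W_i\cap X\}$ of $X$ to a cover $\calv_X$ of order $\le\dim(X)+1$ (by definition of $\dim X$), refines the cover $\{U_i\}\cup\{Y\}$ of $\overline{Y}$ to a cover $\calv_{\overline{Y}}$ of order $\le\dim(\overline{Y})+1$, keeps only those members meeting $\partial Y$, pulls these back along $\overline{f}$, and takes the union of the two resulting families. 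This union covers $\overline{X}$, refines $\calw$, and has order at most $(\dim X+1)+(\dim\overline{Y}+1)$, which is exactly the claimed bound. In other words, the paper proves the needed sum inequality in situ by exhibiting the refinement directly, using only the definitions of $\dim X$ and $\dim\overline{Y}$, and never the dimension of $\partial Y$ or of $\overline{X}$ itself. If you want to complete your route, you would either have to carry out essentially this explicit refinement anyway, or establish enough separation on $\overline{X}$ from the hypotheses on $\overline{Y}$, $X$, $f$ — and the latter is not available at the stated level of generality.
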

\begin{proof}~\eqref{lem:properties_of_pulling_back_the_boundary:X_dense} Consider
  $x \in \partial X$ and a neighborhood $W$ of $x$ in $\overline{X}$. We have to show
  $X \cap W \not= \emptyset$.  We can write $W = \overline{f}^{-1}(U) \cup V$ for open
  subsets $U \subset \overline{Y}$ and $V \subseteq X$.  Without loss of generality we can
  assume $V = \emptyset$, or, equivalently $W = \overline{f}^{-1}(U)$ for open subset
  $U \subset \overline{Y}$. Obviously $U$ is an open neighborhood of $\overline{f}(x) \in \overline{Y}$.
  Since by assumption the closure of the image of $f$ in $\overline{Y}$ contains
  $\partial Y$, we have $\im(f) \cap U \not= \emptyset$ and hence
  $X \cap W \not= \emptyset$.
  \\[1mm]~\eqref{lem:properties_of_pulling_back_the_boundary:overline(X)-compact} Let
  $\{W_i \mid i \in I\}$ be an open covering of $\overline{X}$.  We can write
  $W_i = \overline{f}^{-1}(U_i) \cup V_i$ for open subsets $U_i \subset \overline{Y}$ and
  $V_i \subseteq X$. Then $\{U_i \cap \partial Y \mid i \in I\}$ is an open covering of
  $\partial Y$.  Since $\partial Y \subseteq Y$ is closed and $\overline{Y}$ is compact by
  assumption, $\partial Y$ is compact.  Hence there is a finite subset $J \subseteq I$
  with $\partial Y \subseteq \bigcup_{i \in J} U_i$. The set
  $\overline{Y} \setminus \bigl(\bigcup_{i \in J} U_i\bigr)$ is closed in $\overline{Y}$
  and hence compact.  Since $\overline{Y} \setminus \bigl(\bigcup_{i \in J} U_i\bigr)$ is
  contained in $Y$ and $f \colon X \to Y$ is by assumption proper, the preimage
  $f^{-1}\left(\overline{Y} \setminus \bigl(\bigcup_{i \in J} U_i\bigr)\right)$ is also
  compact.  Hence there is a finite subset $J' \subseteq I$ such that
  $\{V_i \mid j \in J'\}$ covers
  $f^{-1}\left(\overline{Y} \setminus \bigl(\bigcup_{i \in J} U_i\bigr)\right)$. Hence
  $\{W_i \mid i \in J \cup J'\}$ covers $\overline{X}$. This shows that $\overline{X}$ is
  compact.  
\\[1mm]~\eqref{lem:properties_of_pulling_back_the_boundary:dim} Consider any
  open covering $\calw = \{W_i \mid i \in I\}$ of $\overline{X}$. By definition there are
  $U_i \subseteq \overline{Y}$ and $V_i \subseteq X$ such that
  $W_i = \overline{f}^{-1}(U_i) \cup V_i$.  Now put
  \begin{eqnarray*}
  \calw_{\partial X} & := & \{\overline{f}^{-1}(U_i) \mid i \in I\};
  \\
   \calw_X & := & \{W_i \cap X  \mid i \in I\}.
 \end{eqnarray*}
 Then $\calw_{\partial X}  \cup \calw_X$ is an open covering of $\overline{X}$, which is a refinement
 of $\calw$.  Moreover, $\calw_X$ is an open covering of $X$ and the union of the elements
 in $\calw_{\partial X}$  contains $\partial X$.  We can find an open covering
 $\calv_X$ whose covering dimension is less or equal to $\dim(X)$ and which refines $\calw_X$.  We
 obtain an open covering $\{U_i \mid i \in I\} \cup \{Y\}$ of $\overline{Y}$, since
 $\partial Y$ is contained in $\bigcup_{i \in I} U_i$. We can find an open covering
 $\calv_{\overline{Y}}$ of $\overline{Y}$ which is a refinement of $\{U_i \mid i \in I\}
 \cup \{Y\}$ and has dimension $\le \dim(\overline{Y})$.  Put
  \[
   \calv_{\partial Y} := \{V \in \calv_{\overline{Y}} \mid V \cap \partial Y \not= \emptyset\}.
  \]
  Then $ \calv_{\partial Y}$ is a refinement of $\{U_i \mid i \in I\}$, has covering
  dimension $\le \dim(\overline{Y})$ and the union of the elements in $ \calv_{\partial
    Y}$ contains $\partial Y$. Define $\overline{f}^*\calv_{\partial X}$ to be the
  collection of open subsets of $\overline{X}$ given by $\{\overline{f}^{-1}(V) \mid V \in
  \calv_{\partial Y}\}$. Then $\overline{f}^*\calv_{\partial X}$ is a refinement of
$\calw_{\partial X}$, has covering  dimension  $\le \dim(\overline{Y})$ and the union of its
elements contains $\partial X = \partial Y$. Put
\[
\calv = \calw_{X} \cup \overline{f}^*\calv_{\partial X}.
\]
Then $\calv$ is an open covering of $\overline{X}$ which refines $\calw$. Its covering
dimension satisfies
\[
\dim(\calv) \le  \dim(\calv_{X}) + \dim(\overline{f}^*\calv_{\partial X}) + 1 \le \dim(X) + \dim(\overline{Y}) +1.
\]
\eqref{lem:properties_of_pulling_back_the_boundary:overline(f)-continuous} If $U
  \subseteq \overline{Y}$ is open, then by definition $\overline{f}^{-1}(U)\subseteq
  \overline{X}$ is open.
  \\[1mm]~\eqref{lem:properties_of_pulling_back_the_boundary:homeo_on_boundary} Obviously
  $\overline{f} \colon \overline{X} \to \overline{Y}$ induces a bijective continuous map
  $\partial f \colon \partial X \to \partial Y$. We have to show that it is open.  An open
  subset of $\partial X$ is of the form $\bigl(\overline{f}^{-1}(U) \cup V
  \bigr)\cap \partial X$ for some open subsets $U \subseteq \overline{Y}$ and $V \subseteq  X$. 
  Its image under $\partial f$ is $U \cap \partial Y$ and hence an open subset of $\partial Y$.
  \\[1mm]~\eqref{lem:properties_of_pulling_back_the_boundary:functoriality}. Notice that
  as sets $\overline{\overline{Z}}$ and $\overline{Z}$ agree, both look like 
  $Z  \amalg \partial Y$.  Next we show that the two topologies agree. A subset $W$ of
  $\overline{\overline{Z}}$ is open if there are open subsets $U \subseteq \overline{Y}$
  and $V_2 \subseteq Z$ with $W = \overline{f \circ g}^{-1}(U) \cup V_2$. A subset $W_1
  \subseteq \overline{X}$ is open if there exist open subsets $U \subseteq \overline{Y}$
  and $V_1 \subseteq X$ with $W_1 = \overline{f}^{-1}(U) \cup V_1$. A subset $W_2$ of
  $\overline{Z}$ is open, if there exist open subsets $W_1 \subseteq \overline{X}$ and
  $V_2 \subseteq Z$ such that $W_2$ looks like $\overline{g}^{-1}(W_1) \cup V_2$. This is
  equivalent to the existence of open subsets $U \subseteq \overline{Y}$, $V_1 \subseteq
  X$ and $V_2 \subseteq Z$ such that
\[
W_2 = \overline{g}^{-1}\bigl(\overline{f}^{-1}(U) \cup V_1) \cup V_2.
\]
Since
\[\overline{g}^{-1}\bigl(\overline{f}^{-1}(U) \cup V_1) \cup V_2 = 
\overline{f \circ g}^{-1}(U) \cup \left( g^{-1}(V_1) \cup V_2\right)
\]
and $g^{-1}(V_1) \cup V_2$ is an open subset of $Z$,
the topology on  $\overline{\overline{Z}}$ is finer than the  topology on $\overline{Z}$.
So it remains to show that the topology on $\overline{Z}$ is finer than the topology on 
$\overline{\overline{Z}}$. This follows from the observation that 
for  open subsets $U \subseteq \overline{Y}$ and  $V_2 \subseteq Z$  we get
\[
\overline{f \circ g}^{-1}(U) \cup V_2 = \overline{g}^{-1}(\overline{f}^{-1}(U) \cup \emptyset) \cup V_2.
\]
\end{proof}

\begin{example}[One-point-compactification]\label{exa:one-point-compactification}
  Let $X$ and $Y$ be locally compact Hausdorff spaces. Denote by $X^c$ and $Y^c$
  their one-point-compactification.  Let $f \colon X \to Y$ be a map. Denote by
  $(\overline{X},X)$ the space obtained from $(Y^c,Y)$ by pulling back the boundary with
  $f$.  

  Consider first the case where $f$ is proper. Recall that a subset $W \subseteq Y^c = Y  \cup \{\infty\}$ 
  is open if it belongs to $Y$ and is open in $Y$ or there is a compact
  subset $C \subseteq Y$ such that $W = Y^c \setminus C$. This is indeed a topology,
  see~\cite[page~184]{Munkres(1975)}. By construction the underlying sets for
  $\overline{X}$ and $X^c$ agree, namely, they are both given by $X \amalg
  \{\infty\}$. Next we compare the topologies.
  
  Consider an open subset $W$ of $\overline{X}$. We want to show that $W \subseteq X^c$ is
  open.  We can write $W = \overline{f}^{-1}(U) \cup V$ for open subsets $U \subseteq Y^c$
  and $V \subseteq X$. If $\infty$ does not belong to $U$, then $U$ is an already open
  subset of $Y$ and $\overline{f}^{-1}(U) = f^{-1}(U)$ is an open subset of $X$ which
  implies that $W \subseteq X$ and hence $W \subseteq X^c$ are  open. It remains to treat the case $\infty \in U$.
  From the definitions we conclude that we can write
  $W = \overline{f}^{-1}(Y^c \setminus C) \cup V$ for some compact subset $C \subseteq Y$
  and an open subset $V$ of $Y$. Since
  \[
  \overline{f}^{-1}(Y^c \setminus C) = \overline{X} \setminus f^{-1}(C)
  \]
  and by the properness of $f$ the set $f^{-1}(C) \subseteq X$ is compact, $W$ is open
  regarded as a subset of $X^c$.  

  This shows that  the identity induces a continuous bijective map
  $X^c \to \overline{X}$.  (One can also deduce this directly 
  from Lemma~\ref{lem:universal_property_of_pulling_back_boundary}.)

Since $X^c$ is compact and $\overline{X}$ is Hausdorff, this is
  a homeomorphism, see~\cite[Theorem~5.6 in Chapter~III on page~167]{Munkres(1975)}.
  Hence we get an equality of topological spaces $\overline{X} = X^c$ and of maps
  $\overline{f} = f^c$. 

  Now consider the case where $f$ is the constant map onto some point $y_0 \in Y$. Suppose
  that $X$ is not compact, or, equivalently, that the constant map $f$ is not proper. The set
  $Y^c \setminus \{y_0\}$ is open in $Y^c$. Hence 
  $\partial X = \{\infty\} = \overline{f}^{-1}(Y^c   \setminus \{y_0\}) $ is an open subset of $\overline{X}$. 
  Since also $X \subseteq   \overline{X}$ is open, $\overline{X}$ is, as a topological space, the disjoint 
  union $X  \amalg \{\infty\}$. Since $X$ is not compact, its one-point compactification is not
  homeomorphic to $\overline{X}$.
\end{example}

\begin{remark}[Dependency on $f$]\label{def:Dependency_on_f}
  Example~\ref{exa:one-point-compactification} shows that $\overline{X}$ does depend on
  the choice of $f$.  So the reader should be careful when we just write $\overline{X}$
  without including $f$ in the notation. 
\end{remark}

\begin{lemma}\label{lem:small_at_infinite_pulling_back_boundaries}
  Consider a pair $(\overline{Y},Y)$ of $G$-spaces, $G$ a discrete group,  such that compact
  subsets of $Y$ become small at infinity in the sense of
  Definition~\ref{def:Compact_sets_become_small_at_infinity}.  Let
  $f \colon X \to Y$ be a $G$-map. Suppose that $(\overline{X},X)$ is
  obtained from $(\overline{Y},Y)$ by pulling back the boundary with
  $f$.

  Then compact subsets of $X$ become small at infinity.
\end{lemma}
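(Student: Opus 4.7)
The plan is to reduce the smallness-at-infinity condition for $(\overline{X},X)$ directly to the hypothesized condition for $(\overline{Y},Y)$ via pullback, exploiting the fact that on the boundary the map $\overline{f}$ is (by Lemma~\ref{lem:properties_of_pulling_back_the_boundary}~\eqref{lem:properties_of_pulling_back_the_boundary:homeo_on_boundary}) the identity $\partial X = \partial Y$, so data for a boundary point $x \in \partial X$ transfers unchanged to $(\overline{Y},Y)$.

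Fix $x \in \partial X$, an open neighborhood $U \subseteq \overline{X}$ of $x$, and a compact subset $K \subseteq X$. By definition of the pullback topology, $U = \overline{f}^{-1}(U_0) \cup V_0$ for some open $U_0 \subseteq \overline{Y}$ and some open $V_0 \subseteq X$. Because $x \in \partial X$ does not lie in $X$, it cannot lie in $V_0$, so $x \in \overline{f}^{-1}(U_0)$, i.e.\ $x = \overline{f}(x) \in U_0$, and in particular $U_0$ is an open neighborhood of $x$ regarded as a point of $\partial Y \subseteq \overline{Y}$. Since $f$ is continuous, $f(K) \subseteq Y$ is compact.

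Now apply the hypothesis that compact subsets of $Y$ become small at infinity in $(\overline{Y},Y)$ to the point $x \in \partial Y$, the open neighborhood $U_0$ of $x$ in $\overline{Y}$, and the compact set $f(K) \subseteq Y$. This gives an open neighborhood $V_0' \subseteq U_0$ of $x$ in $\overline{Y}$ such that for every $g \in G$,
\[
g \cdot f(K) \cap V_0' \neq \emptyset \;\Longrightarrow\; g \cdot f(K) \subseteq U_0.
\]
Define $V := \overline{f}^{-1}(V_0') \subseteq \overline{X}$; this is open by Lemma~\ref{lem:properties_of_pulling_back_the_boundary}~\eqref{lem:properties_of_pulling_back_the_boundary:overline(f)-continuous}, it contains $x$ since $\overline{f}(x) = x \in V_0'$, and $V \subseteq \overline{f}^{-1}(U_0) \subseteq U$.

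It remains to verify the smallness implication for $V$. Suppose $g \cdot K \cap V \neq \emptyset$, say $g \cdot k \in V$ with $k \in K$. Because the $G$-action preserves $X$ and $\overline{f}$ is $G$-equivariant and agrees with $f$ on $X$, we have $g \cdot f(k) = f(g \cdot k) = \overline{f}(g \cdot k) \in V_0'$, so $g \cdot f(K) \cap V_0' \neq \emptyset$ and therefore $g \cdot f(K) \subseteq U_0$ by the choice of $V_0'$. Consequently, for any $k' \in K$, the point $g \cdot k' \in X$ satisfies $\overline{f}(g \cdot k') = g \cdot f(k') \in U_0$, hence $g \cdot k' \in \overline{f}^{-1}(U_0) \subseteq U$, which proves $g \cdot K \subseteq U$. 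There is no real obstacle here; the only slightly delicate point is confirming that $V$ lies inside $U$ and contains $x$, which is why we chose to pull back a shrunken neighborhood $V_0'$ of $x$ inside $U_0$ rather than attempting to shrink $U$ itself directly in $\overline{X}$.
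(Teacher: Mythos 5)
Your proof is correct and follows essentially the same route as the paper's: decompose the given neighborhood $U$ of $x\in\partial X$ as $\overline f^{-1}(U_0)\cup V_0$, observe $U_0$ is a neighborhood of $x$ in $\overline Y$, apply the hypothesis to $f(K)$ to obtain $V_0'$, and pull back to $V=\overline f^{-1}(V_0')$. Your final verification that $g\cdot K\subseteq U$ (checking each $k'\in K$ directly) is a touch cleaner than the paper's passage through $g\cdot f^{-1}(L)$, but the argument is the same.
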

\begin{proof}
  Consider an element $x \in \partial X$, an open neighborhood $U \subseteq \overline{X}$
  of $x$, and a compact subset $K \subseteq X$. We can find an open neighborhood $U'
  \subseteq Y$ of $f(x)  \in \partial Y$ and an open subset $W \subseteq X$ such
  that $U = \overline{f}^{-1}(U') \cup W$. Put $L = f(K)$.  Then $L \subseteq Y$ is
  compact. By assumption we can find an open neighborhood $V' \subseteq U'$ of $f(x)
  \in \partial Y$ such that the implication $g \cdot L \cap V'
  \not= \emptyset \implies g \cdot L \subseteq U'$ holds for every $g \in G$. Put $V =
  \overline{f}^{-1}(V')$. This is an open neighborhood of $x \in \partial X$ with $V
  \subseteq U$.  Moreover we get for every $g \in G$
  \begin{multline*}
    g \cdot K \cap V \not= \emptyset \implies g \cdot L \cap V' \not= \emptyset
    \\
    \implies g \cdot L \subseteq U' \implies g \cdot f^{-1}(L) \subseteq \overline{f}^{-1}(U')
    \implies g \cdot K \subseteq U.
  \end{multline*}
\end{proof}

\begin{definition} [Continuously controlled over $Y$ at $\partial Y$]\label{def:continuously_controlled} Consider a
  pair $(\overline{Y},Y)$ of spaces and a homotopy equivalence $f \colon X \to Y$.  We
  call $f$ \emph{continuously controlled over $Y$ at $\partial Y$} if there exists a map $u \colon Y \to X$ and homotopies
  $h \colon f \circ u \simeq \id_Y$ and $k \colon u \circ f \simeq \id_X$ with the
  following property: For every $z \in \partial Y = \overline{Y} \setminus Y$ and
  neighborhood $U$ of $z$ in $\overline{Y}$ there is an open neighborhood $V$ of $z$ in
  $\overline{Y}$ with $V \subseteq U$ such that the following two implications are true:
  \begin{itemize}
  \item $y \in V \implies h(\{y\} \times [0,1]) \subseteq U$;
   \item $x \in f^{-1}(V) \implies f \circ k(\{x\} \times [0,1]) \subseteq U$.
   \end{itemize}
 \end{definition}

\begin{lemma}\label{lem:criterion_for_proper}
Let $f \colon X \to Y$ be a $G$-map of proper free  $G$-spaces, $G$ a discrete group. Suppose that $X$ is cocompact.
Then $f$ is proper.
\end{lemma}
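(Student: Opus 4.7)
The plan is to show that for any compact subset $K\subseteq Y$, the preimage $f^{-1}(K)$ is compact, by sandwiching it between a closed set (to get closedness) and a finite union of translates of a cocompact fundamental domain (to get compactness).

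First, I would invoke cocompactness of $X$ to pick a compact subset $C\subseteq X$ with $G\cdot C = X$. Set $L := f(C)\cup K\subseteq Y$; as a finite union of compact sets, $L$ is compact. Because $Y$ is a proper free $G$-space and $G$ is discrete, properness of the action yields the standard fact that for any pair of compact subsets $A,B\subseteq Y$ the set $\{g\in G\mid gA\cap B\neq\emptyset\}$ is finite. Applied to $A=f(C)$ and $B=K$, we get a finite set $\{g_1,\dots,g_n\}\subseteq G$ containing every $g$ with $g\cdot f(C)\cap K\neq\emptyset$.

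Next, I would show the inclusion $f^{-1}(K)\subseteq \bigcup_{i=1}^n g_i\cdot C$. If $x\in f^{-1}(K)$, write $x=g\cdot c$ for some $g\in G$ and $c\in C$; then $f(x)=g\cdot f(c)\in K$, so $g\cdot f(C)\cap K\neq\emptyset$ and hence $g\in\{g_1,\dots,g_n\}$. The right-hand side $\bigcup_{i=1}^n g_i\cdot C$ is a finite union of compact sets, hence compact. Finally, $f^{-1}(K)$ is closed in $X$ (since $K$ is closed in the Hausdorff space $Y$ and $f$ is continuous), so it is a closed subset of a compact Hausdorff space and therefore compact. This completes the proof.

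The whole argument is essentially bookkeeping: the only nontrivial input is the standard equivalent reformulation of properness for a discrete group action (for compact $A,B$, finitely many translates of $A$ meet $B$), and nothing else in the statement presents a real obstacle.
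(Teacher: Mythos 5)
Your proof is correct, but it takes a genuinely more elementary route than the paper. The paper observes that the square with vertical arrows $X \to X/G$ and $Y \to Y/G$ (principal $G$-bundles) and horizontal arrows $f$ and $f/G$ is a pullback, notes that $f/G$ is proper because $X/G$ is compact, and then appeals to a cited lemma in L\"uck's book to conclude that $f$, being a base change of a proper map along a bundle projection, is itself proper. Your argument instead works entirely upstairs: it uses cocompactness to pick a compact $C$ with $G\cdot C = X$, invokes the ``finitely many translates meet'' characterization of properness of the $G$-action on $Y$ to trap $f^{-1}(K)$ inside a finite union $\bigcup_{i=1}^n g_i\cdot C$, and finishes by noting that $f^{-1}(K)$ is a closed subset of this compact set. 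What the paper's approach buys is brevity and conceptual packaging (delegating the hard point to a standard lemma on pullbacks of proper maps along principal bundles); what yours buys is self-containment and transparency, since it unwinds exactly which finiteness is being used. One small remark: the set $L := f(C)\cup K$ that you introduce is never used afterwards and can be deleted; also, the final step only needs that a closed subset of a compact space is compact, so the Hausdorff hypothesis you mention there is not required for that particular implication (though these spaces are Hausdorff in any case).
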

\begin{proof}
We have the following pullback
\[
\xymatrix{X  \ar[d] \ar[r]^f & Y \ar[d]
\\
X/G \ar[r]^{f/G} & Y/G
}
\]
where the vertical maps are principal $G$-bundles. Since $X/G$ is  compact,
$f/G$ is proper. Hence $f$ is proper by~\cite[Lemma~1.16 on page~14]{Lueck(1989)}.
\end{proof}

\begin{lemma}\label{lem:pulling_back_Z-sets} Consider a pair $(\overline{Y},Y)$ of spaces
  such that $\overline{Y}$ is a compact $\ENR$ and $\partial Y$ is a $Z$-set in
  $\overline{Y}$. Consider a homotopy equivalence $f \colon X \to Y$ which is continuously
  controlled. Let $(\overline{f},f) \colon (\overline{X},X) \to (\overline{Y},Y)$ be
  obtained by pulling back the boundary along $f$.

  Then $\overline{X}$ is an $\ENR$ and $\partial X \subseteq \overline{X}$ is a Z-set.
\end{lemma}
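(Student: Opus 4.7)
The plan is to verify the ENR property and the $Z$-set property in turn, both steps driven by continuous-control extensions of the homotopy-inverse data.

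As a preliminary step I would extend the homotopy inverse $u \colon Y \to X$ to a continuous map $\overline{u} \colon \overline{Y} \to \overline{X}$ by declaring $\overline{u}|_{\partial Y} = \id_{\partial Y}$ under the identification $\partial X = \partial Y$ from Lemma~\ref{lem:properties_of_pulling_back_the_boundary}~\eqref{lem:properties_of_pulling_back_the_boundary:homeo_on_boundary}. Continuity at a boundary point $z$ follows from the continuous-control condition on $h$: any basic neighborhood $\overline{f}^{-1}(U)$ of $z$ in $\overline{X}$ contains the $\overline{u}$-image of the smaller neighborhood $V$ of $z$ supplied by control, since $f \circ u(V \cap Y) \subseteq U$. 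By the same mechanism, $h$ and $k$ extend to continuous homotopies $\overline{h} \colon \overline{Y} \times I \to \overline{Y}$ and $\overline{k} \colon \overline{X} \times I \to \overline{X}$ that are stationary on the boundary at every time.

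For the ENR assertion I would verify Borsuk's characterization: $\overline{X}$ is a finite-dimensional, compact, metrizable, locally contractible space. Compactness follows from Lemma~\ref{lem:properties_of_pulling_back_the_boundary}~\eqref{lem:properties_of_pulling_back_the_boundary:overline(X)-compact} once $f \colon X \to Y$ is shown to be proper; continuous control yields properness because any sequence in the $f$-preimage of a compact subset of $Y$ that failed to cluster in $X$ would cluster in $\overline{X}$ on $\partial X$, forcing its $f$-image into $\partial Y$, a contradiction. Metrizability comes from compact Hausdorff plus second countability inherited through $\overline{f}$ from $\overline{Y}$. Finite-dimensionality is Lemma~\ref{lem:properties_of_pulling_back_the_boundary}~\eqref{lem:properties_of_pulling_back_the_boundary:dim}. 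Local contractibility at a boundary point $z$ follows by shrinking a basic neighborhood $\overline{f}^{-1}(U)$ using local contractibility of $\overline{Y}$ at $\overline{f}(z)$ and transporting the resulting contraction to $\overline{X}$ via $\overline{u}$; continuous control of $\overline{h}$ and $\overline{k}$ keeps the transported contraction inside the prescribed neighborhood.

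For the $Z$-set assertion I would construct a homotopy $H \colon \overline{X} \times I \to \overline{X}$ satisfying $H_0 = \id_{\overline{X}}$ and $H_t(\overline{X}) \subseteq X$ for all $t > 0$. Starting from a $Z$-set homotopy $G \colon \overline{Y} \times I \to \overline{Y}$ for $\partial Y \subseteq \overline{Y}$ and choosing a continuous function $\rho \colon \overline{X} \to [0,1]$ with $\rho^{-1}(0) = \partial X$ (available since $\overline{X}$ is metrizable), I define
\[
H(x, t) = \begin{cases} k\bigl(x,\, 1 - t/\rho(x)\bigr) & x \in X,\ t \in [0, \rho(x)], \\ u\bigl(G(f(x),\, (t - \rho(x))/(1 - \rho(x)))\bigr) & x \in X,\ t \in [\rho(x), 1], \\ \overline{u}\bigl(G(x, t)\bigr) & x \in \partial X. \end{cases}
\]
The $t=0$ identity condition and the push-off condition $H_t(\overline{X}) \subseteq X$ for $t > 0$ are visible from the formulas, and the two $X$-cases agree at $t = \rho(x)$ because both evaluate to $u(f(x))$. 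The main obstacle is continuity of $H$ at pairs $(x_0, 0)$ with $x_0 \in \partial X$ approached from interior sequences $x_n \in X$, $x_n \to x_0$: continuous control of $k$ traps the first-phase values $k(x_n, 1 - t_n/\rho(x_n))$ in arbitrarily small $\overline{f}^{-1}(U)$-neighborhoods of $x_0$, while the second-phase time rescaling $(t_n - \rho(x_n))/(1 - \rho(x_n))$ is forced to $0$ whenever $t_n \to 0$, so $G(f(x_n), \cdot) \to x_0$ in $\overline{Y}$ and hence $\overline{u}(G(f(x_n), \cdot)) \to x_0$ in $\overline{X}$. This matching at boundary points is where the continuous-control hypothesis does the decisive work.
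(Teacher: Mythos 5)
Your $Z$-set argument is structurally the same as the paper's: both interpolate between a reparametrized extension $\overline{k}$ of $k\colon u\circ f\simeq \id_X$ and the composite of $\overline{u}$, the given $Z$-set homotopy on $\overline{Y}$, and $\overline{f}$, so that the track starts at the identity and lands in $X$ for positive time. The only stylistic difference is the choice of splitting time: you use an abstract Urysohn function $\rho$ vanishing exactly on $\partial X$, while the paper sets $\alpha(x)=\min\bigl(\diam\{\overline{k}_t(x):0\le t\le1\},\tfrac12\bigr)$. Tying the time to the diameter of the $k$-track makes the continuity check at $(x_0,0)$ more self-contained, and the cap at $\tfrac12$ also sidesteps the division by $1-\rho(x)$ that your second branch produces when $\rho(x)=1$; you should likewise cap $\rho$ strictly below $1$.

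The $\ENR$ step is where you genuinely diverge. You verify Borsuk's point-set criterion directly (finite-dimensional, compact, metrizable, locally contractible), which forces a separate local-contractibility argument at boundary points via contractions transported by $\overline{u}$ and the tracks of $\overline{k}$. The paper instead invokes Hanner's theorem that a compact metric space which is $\epsilon$-dominated by an $\ENR$ for every $\epsilon>0$ is itself an $\ENR$, and observes that the push-off homotopy just constructed already exhibits $X$ as such an $\epsilon$-dominator of $\overline{X}$. Hanner's criterion lets one homotopy do double duty and avoids the local-contractibility bookkeeping altogether. Your route is heavier but does work once fleshed out.

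One step does not go through as written. The attempt to deduce properness of $f$ from continuous control is circular: you argue that a sequence in $f^{-1}(C)$ with no cluster point in $X$ ``would cluster in $\overline{X}$ on $\partial X$,'' but that presupposes sequential compactness of $\overline{X}$, which is precisely what Lemma~\ref{lem:properties_of_pulling_back_the_boundary}~\eqref{lem:properties_of_pulling_back_the_boundary:overline(X)-compact} produces \emph{from} properness. Properness is genuinely not a formal consequence of continuous control---the projection $\IR^2\to\IR$, with $\overline{Y}=[-\infty,\infty]$ and $u(y)=(y,0)$, $k((x,y),t)=(x,ty)$, is continuously controlled but not proper---so it must be treated as an additional input. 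In the application (Theorem~\ref{the:adding_boundary}) properness is supplied by Lemma~\ref{lem:criterion_for_proper} via cocompactness, and the paper's proof of the present lemma simply takes compactness of $\overline{X}$ as given. State it as a hypothesis rather than deriving it.
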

\begin{proof} We will use the third characterization of $Z$-set from Definition \ref{def:Z-set}. This characterization says that if
  $\overline{X}=X \cup \partial X$ with $X$ a compact $\ENR$ and if there is a homotopy
  $h_{t}:\overline{X} \to \overline{X}$ with $h_{0}=\id$ and
  $h_{t}(\overline{X}) \subset X$ for all $t>0$, then $\overline{X}$ is an $\ENR$ and
  $\partial X$ is a $Z$-set in $\overline{X}$.

  The statement in~\cite{Bestvina(1996)} assumes that $\overline{X}$ is an $\ENR$, but this
  is unnecessary in connection with definition (3), since Hanner's criterion, see~\cite[Theorem~7.2]{Hanner(1951)}, says that a
  compact metric space is an $\ENR$ if it is $\epsilon$-dominated by $\ENR$s for every
  $\epsilon > 0$. The homotopy $h_{t}$ above shows that the $\ENR$ $X$ $\epsilon$-dominates
  $\overline{X}$ for every $\epsilon>0$.\footnote{A map $f:X \to Y$ between metric spaces is an $\epsilon$-domination if there is a map $g:Y \to X$ so that the composition $f \circ g:Y \to Y$ is $\epsilon$-homotopic to the identity.}\footnote{Hanner's Theorem, as stated in 
  \cite{Hanner(1951)}, is much more general than the version we have stated here. Hanner's theorem applies to $\ANR's$, by which he means separable metric spaces $X$ such that whenever $X$ is imbedded as a closed subset of another separable metric space $Z$, it is a retract of some neighborhood in $Z$. In particular, $X$ need not be even locally compact. In order to achieve such generality, it is necessary to consider homotopy dominations limited by open covers rather than by fixed constants $\epsilon$.}

  Let $c_{t}:\overline{Y} \to \overline{Y}$ be a homotopy so that
  $c_{0}=\id_{\overline{Y}}$ and $c_{t}(\overline{Y}) \subset Y$ for all $t>0$. The
  homotopy equivalence $f$ has a homotopy inverse $g:Y \to X$. The continuous control
  condition means that $f$ extends continuously by the identity on
  $\partial X = \partial Y$ to $\bar f:\overline{X} \to \overline{Y}$, $g$ extends
  continuously by the identity to $\bar g:\overline{Y} \to \overline{X}$ and there are
  homotopies $h_{t}$ from $\id_{Y}$ to $f\circ g$ and $k_{t}$ from $\id_{X}$ to
  $g \circ f$ which extend continuously by the identity to $\bar h_{t}$ and $\bar k_{t}$. 
  Restricted to $X$ and $Y$, all of these maps and homotopies are proper.
 
  For $x \in \overline{X}$, let
  $\alpha(x)= \min(\diam(\{\bar k_{t}(x), 0\le t \le 1\}),\frac{1}{2})$. Set
  \[
  \bar e_{t}=
  \begin{dcases}
    \bar k_{t/\alpha(x)}(x) &0\le t \le \alpha(x),\ \alpha(x)\ne 0; \\
    \bar g\circ c_{t-\alpha(x)}\circ \bar f(x) & \alpha(x) \le t \le 1\ \mathrm{or}\
    \alpha(x)=0.
  \end{dcases} 
  \]
  For $t=0$ and $\alpha(x) \ne 0$, we have
  $\bar e_{0}(x) = \bar k_{0}(x)=x$. If $t=0$ and $\alpha(x)=0$, we have
  $\bar e_{0}(x)=\bar g\circ c_{0}\circ \bar f(x)=x$, since $\alpha(x)=0$ implies that
  $\bar k_{t}(x)=x$ for all $0 \le t \le 1$. If $t=\alpha(x) \ne 0$,
  $\bar e_{t}(x) = \bar g \circ \bar f(x)$ with either definition. If $t=\alpha(x)$, then
  both definitions give $\bar g \circ \bar f(x)$.
  This shows that $\bar e_t$ is a well-defined continuous function with $e_0=\id_{\overline X}$. For 
  any $x \in \partial X$, $\alpha(x)=0$ and $\bar e_t(x)=\bar g\circ c_t \circ \bar f(x) = 
  \bar g \circ  c_t(x)$. Since $c_t(x)\in Y$, $\bar e_t(x)=\bar g \circ  c_t(x) \in X$, as desired.  
The formula above shows that points of $X$ have no possibility of moving back into $\partial X$, so the proof is complete.
  \end{proof}

  \begin{lemma}\label{lem:getting_continuous_control} Consider a $G$-homotopy equivalence of
    $f \colon X \to Y$ of cocompact proper free $G$-spaces, $G$ a discrete group. Suppose that $Y$ is a subspace of the
    compact $G$-space $\overline{Y}$ such that compact subsets become small at
      infinity for $(\overline{Y},Y)$.

    Then $f$ is continuously controlled at $\partial Y$.
    \end{lemma}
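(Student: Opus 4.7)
The plan is to build $u, h, k$ as any $G$-equivariant homotopy inverse and homotopies (which exist because $f$ is assumed to be a $G$-homotopy equivalence), and then to exploit cocompactness plus $G$-equivariance to reduce the continuous control estimate to a single application of the ``compact sets become small at infinity'' property to a single, carefully chosen compact subset of $Y$.

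More concretely, first I would pick a $G$-homotopy inverse $u \colon Y \to X$ and $G$-homotopies $h \colon Y \times [0,1] \to Y$ from $f \circ u$ to $\id_Y$ and $k \colon X \times [0,1] \to X$ from $u \circ f$ to $\id_X$; such $G$-equivariant data exists since $f$ is a $G$-homotopy equivalence. By cocompactness of the $G$-actions on $Y$ and $X$, we may choose compact subsets $K_Y \subseteq Y$ and $K_X \subseteq X$ with $G \cdot K_Y = Y$ and $G \cdot K_X = X$. Set
\[
L_1 := h(K_Y \times [0,1]) \subseteq Y, \qquad L_2 := f\bigl(k(K_X \times [0,1])\bigr) \subseteq Y,
\]
and $L := L_1 \cup L_2$, which is compact in $Y$. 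By $G$-equivariance of $h$ and $k$, for every $y \in Y$ we may write $y = g \cdot y'$ with $y' \in K_Y$, giving
\[
h(\{y\} \times [0,1]) = g \cdot h(\{y'\} \times [0,1]) \subseteq g \cdot L,
\]
and similarly $f \circ k(\{x\} \times [0,1]) \subseteq g \cdot L$ whenever $x = g \cdot x'$ with $x' \in K_X$. Since $h(y,1) = y$ and $f \circ k(x,1) = f(x)$, the endpoints $y$ and $f(x)$ lie in the respective translates $g \cdot L$.

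Now fix $z \in \partial Y$ and an open neighborhood $U \subseteq \overline{Y}$ of $z$. Since compact subsets of $Y$ become small at infinity (Definition~\ref{def:Compact_sets_become_small_at_infinity}), applied to the compact set $L$, we can find an open neighborhood $V \subseteq U$ of $z$ in $\overline{Y}$ so that for every $g \in G$, the implication $g \cdot L \cap V \neq \emptyset \implies g \cdot L \subseteq U$ holds. If $y \in V \cap Y$, pick $g \in G$ with $g^{-1} y \in K_Y$; then $y \in g \cdot L \cap V$, so $g \cdot L \subseteq U$, and hence $h(\{y\} \times [0,1]) \subseteq g \cdot L \subseteq U$. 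Likewise, if $x \in f^{-1}(V)$, pick $g \in G$ with $g^{-1}x \in K_X$; then $f(x) \in g \cdot L \cap V$, so $g \cdot L \subseteq U$, and $f \circ k(\{x\} \times [0,1]) \subseteq g \cdot L \subseteq U$. This verifies the two conditions of Definition~\ref{def:continuously_controlled}.

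The only subtle point is the asymmetry between $X$ and $Y$: the ``small at infinity'' hypothesis is stated only for the pair $(\overline{Y}, Y)$, so tracks in $X$ must be measured after pushing them to $Y$ via $f$. This is exactly why the second condition in Definition~\ref{def:continuously_controlled} is phrased in terms of $f \circ k$ rather than $k$ itself, and it is precisely what is needed for the argument above to go through. I do not anticipate any serious technical obstacle; the main thing to watch is that $L$ is chosen in $Y$ (not $X$) so that the small-at-infinity hypothesis applies.
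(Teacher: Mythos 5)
Your argument is correct, and it is actually cleaner than the paper's. The paper verifies the two continuous-control conditions by separate applications of the small-at-infinity property: the first directly on $Y$ (with the compact set $h(C\times[0,1])$), but the second indirectly, by first proving $f$ is proper (Lemma~\ref{lem:criterion_for_proper}), forming the pulled-back compactification $(\overline{X},X)$, and invoking Lemma~\ref{lem:small_at_infinite_pulling_back_boundaries} to transfer the small-at-infinity property to $(\overline{X},X)$ before applying it to $k(f^{-1}(C)\times[0,1])$. You bypass all of that by observing that the second condition only concerns the pushed-forward tracks $f\circ k(\{x\}\times[0,1])\subseteq Y$, so you can simply bundle $L_2 := f(k(K_X\times[0,1]))$ together with $L_1 := h(K_Y\times[0,1])$ into a single compact $L\subseteq Y$ and apply the small-at-infinity hypothesis on $(\overline{Y},Y)$ once. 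This yields a single neighborhood $V$ serving both estimates (the paper's proof implicitly requires intersecting the two neighborhoods $V$ and $V''$ it produces), avoids the properness lemma, and avoids the pulled-back boundary construction entirely. The only cost is that you carry two fundamental domains $K_Y$, $K_X$ instead of one $C\subseteq Y$, which is a trivial price. Your final remark correctly identifies why the phrasing via $f\circ k$ rather than $k$ is what makes the shortcut possible.
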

    \begin{proof} Choose a $G$-map $u \colon Y \to X$ and $G$-homotopies $h \colon f \circ u \simeq \id_Y$ 
    and $k \colon u \circ f \simeq \id_X$.  Choose a compact subset
    $C \subseteq Y$ such that $G \cdot C = Y$ holds.

    Fix a point $z \in \partial Y = \overline{Y} \setminus Y$ and an open
    neighborhood $U$ of $z$ in $\overline{Y}$. 

    Since compact subsets become small at
    infinity for $(\overline{Y},Y)$, we can find an open neighborhood $V$ of $z$ in
    $\overline{Y}$ with $V \subseteq U$ such that for every $g \in G$ we have the
    implication
    $g \cdot h(C \times [0,1]) \cap V \not= \emptyset \implies g \cdot h(C \times [0,1])
    \subseteq U$.  

    Consider $y \in V$. We can find $g \in G$ with $y \in g\cdot C$.  Since
    $y = h(y,1) \in g\cdot h(C \times [0,1])$, we get
    $g \cdot h(C \times [0,1]) \cap V \not= \emptyset$.  This implies
    $g \cdot h(C \times [0,1]) \subseteq U$ and in particular
    $h(\{y\} \times [0,1]) \subseteq U$.

    The map $f$ is proper by Lemma~\ref{lem:criterion_for_proper}. Hence
    $f^{-1}(C) \subseteq X$ is compact. Let
    $(\overline{f},f) \colon (\overline {X},X) \to (\overline{Y},Y)$ be obtained by
    pulling back the boundary with $f$. Since compact subsets become small at infinity for
    $(\overline{X},X)$ by Lemma~\ref{lem:small_at_infinite_pulling_back_boundaries}, we
    can find an open neighborhood $V'$ of $z \in \partial X = \partial Y$ in $\overline{X}$ with
    $V' \subseteq \overline{f}^{-1}(U)$ such that for every $g \in G$ we have the
    implication
    $g \cdot k(f^{-1}(C) \times [0,1]) \cap V' \not= \emptyset \implies g\cdot k(f^{-1}(C)
    \times [0,1]) \subseteq \overline{f}^{-1}(U)$.  Choose an open subset $V'' \subseteq \overline{Y}$
    and an open subset $W \subseteq X$ with $V' = \overline{f}^{-1}(V'') \cup W$. Since the
    implication above remains true if we shrink $V'$, we can assume without loss of
    generality that $V' = \overline{f}^{-1}(V'')$. In particular $V''$ is an open neighborhood
    of $z \in \overline{Y}$.

    Consider $x \in X$ with $f(x) \in V''$.  Then $x \in \overline{f}^{-1}(V'')$. We can find
    $g \in G$ with $x \in g \cdot f^{-1}(C)$.  Since
    $x = k(x,1) \in g\cdot k(f^{-1}(C)\times [0,1])$, we get
    $g \cdot k(f^{-1}(C) \times [0,1]) \cap \overline{f}^{-1}(V'') \not= \emptyset$.  This
    implies $g \cdot k(f^{-1}(C) \times [0,1]) \subseteq \overline{f}^{-1}(U)$ and hence
    $f \circ k(\{x\} \times [0,1]) \subseteq U$.
    
  \end{proof}


\section{Recognizing the structure of a manifold with boundary}
\label{sec:Recognizing_the_structure_of_a_manifold_with_boundary}

Recall that we have discussed the basic properties of the Rips complex $P_l(G)$ before in 
Section~\ref{sec:Z-sets}.

\begin{theorem}\label{the:adding_boundary}
  Let $G$ be torsion free hyperbolic group $G$ with boundary $S^2$.  Consider a homotopy
  equivalence $f \colon M \to P_l(G)/G \times N$, where $M$ is a closed homology
  $\ENR$-manifold, and $N$ is a closed topological manifold of dimension $\ge 2$. Denote by
  $p_G \colon P_l(G) \to P_l(G)/G$ the canonical projection.  Let the $G$-covering $\widehat{M} \to M$
  be the pullback with $f$ of the  $G$-covering
  $p_G \times \id_N \colon P_l(G) \times N \to  P_l(G)/G \times N$ and
  $\widehat{f}\colon \widehat{M} \to P_l(G) \times N$ be the induced $G$-homotopy
  equivalence. Let
  $(\overline{\widehat{f}}, \widehat{f}) \colon
  (\overline{\widehat{M}},\widehat{M}) \to (\overline{P_l(G)},P_l(G))$ be obtained
 by pulling back the boundary along $\widehat{f}$.

Then $\overline{\widehat{M}}$ is a compact $\ENR$ homology manifold 
whose boundary $\partial \overline{\widehat{M}}$ is $S^2 \times N$ and a $Z$-set. 
\end{theorem}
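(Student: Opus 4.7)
The plan is to show that $(\overline{\widehat{M}}, \widehat{M})$ inherits from the pair $(\overline{P_l(G)} \times N, P_l(G) \times N)$ — whose added boundary is $\partial G \times N = S^2 \times N$ — all the structure required to invoke Lemma~\ref{lem:Z-set-yields-homology-mfd-with-boundary}: namely that $\overline{\widehat{M}}$ is a compact $\ENR$, that $S^2 \times N$ is a Z-set in it, and that $\widehat{M}$ and $S^2 \times N$ are $\ENR$ homology manifolds of consecutive dimensions.

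First I would verify compactness and continuous control. The space $\widehat{M}$ is a cocompact proper free $G$-space as a $G$-covering of the compact space $M$, and $P_l(G) \times N$ is likewise (with $G$ acting trivially on the compact factor $N$); hence $\widehat{f}$ is proper by Lemma~\ref{lem:criterion_for_proper}. Compactness of $\overline{\widehat{M}}$ then follows from Lemma~\ref{lem:properties_of_pulling_back_the_boundary}\eqref{lem:properties_of_pulling_back_the_boundary:overline(X)-compact}, since $\overline{P_l(G)} \times N$ is a product of compact spaces. Next one checks that $\widehat{f}$ is continuously controlled over $P_l(G) \times N$ at $S^2 \times N$ in the sense of Definition~\ref{def:continuously_controlled}; this is an adaptation of Lemma~\ref{lem:getting_continuous_control} for the product pair, whose input — smallness at infinity of compact subsets of $P_l(G) \times N$ in $\overline{P_l(G)} \times N$ — should be derived from the corresponding property for $(\overline{P_l(G)}, P_l(G))$ recorded in Section~\ref{sec:Z-sets}, together with the compactness of $N$ and the triviality of the $G$-action on it. With continuous control in hand, Lemma~\ref{lem:pulling_back_Z-sets} then yields that $\overline{\widehat{M}}$ is an $\ENR$ and that $S^2 \times N = \partial \overline{\widehat{M}}$ is a Z-set therein; here we also use that $S^2 = \partial G$ is a Z-set in $\overline{P_l(G)}$, whence $S^2 \times N$ is a Z-set in $\overline{P_l(G)} \times N$ (Z-sets being preserved under taking product with a compact ENR).

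The homology-manifold-with-boundary conclusion then follows from Lemma~\ref{lem:Z-set-yields-homology-mfd-with-boundary}: $\widehat{M}$ is an $\ENR$ homology manifold of dimension $3 + \dim(N)$ since covering maps preserve the local homology-manifold property and $M$ has that structure, and $S^2 \times N$ is a closed topological manifold of dimension $2 + \dim(N) = \dim(\widehat{M}) - 1$, hence also an $\ENR$ homology manifold. The main obstacle is the verification of continuous control in the previous step. The smallness-at-infinity condition does not hold verbatim for the product pair, because $\pi_N(g \cdot K) = \pi_N(K)$ is constant in $g$ while open neighbourhoods of $(z, n_0) \in S^2 \times N$ can be arbitrarily narrow in the $N$-direction. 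I expect the remedy to be to restrict attention to product-shaped neighbourhoods $U_1 \times U_2$ of $(z, n_0)$ and then exploit the triviality of the $G$-action on the compact factor $N$, combining smallness in $\overline{P_l(G)}$ for the $P_l(G)$-component of the tracks of the $G$-homotopies with uniform continuity of their descent to the compact quotient $P_l(G)/G \times N$ for the $N$-component. This should suffice to give the required continuous control even though the literal smallness condition fails.
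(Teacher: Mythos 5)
Your blueprint mirrors the paper's: compactness and finite dimension come from Lemma~\ref{lem:criterion_for_proper} together with Lemma~\ref{lem:properties_of_pulling_back_the_boundary}, continuous control from (a version of) Lemma~\ref{lem:getting_continuous_control}, then the $\ENR$ and $Z$-set conclusions from Lemma~\ref{lem:pulling_back_Z-sets}, and finally Lemma~\ref{lem:Z-set-yields-homology-mfd-with-boundary} for the homology--manifold--with--boundary structure. You have also correctly put your finger on the delicate point: since $G$ acts trivially on the $N$-factor, the smallness-at-infinity hypothesis of Lemma~\ref{lem:getting_continuous_control} fails verbatim for $(\overline{P_l(G)} \times N, P_l(G) \times N)$ — a compact $K$ with $N$-projection meeting, but not contained in, the $N$-projection of $U$ defeats the implication for every $g$, and there always exist $g$ with $g\cdot K$ meeting any neighbourhood of $(z,n_0)$. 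The paper's proof recalls this hypothesis only for $(\overline{P_l(G)}, P_l(G))$ and then invokes the lemma for the product without comment, so your observation is on target.

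However, the remedy you sketch does not close the gap. Write the $G$-homotopy $h\colon \widehat f\, u\simeq \id_{P_l(G)\times N}$ as $(h_P,h_N)$. Since $G$ acts trivially on $N$, the map $h_N$ is $G$-invariant and descends to a uniformly continuous $\bar h_N$ on the compact quotient $(P_l(G)/G \times N)\times[0,1]$, with $\bar h_N(\cdot,\cdot,1)=\pi_N$ and $\bar h_N(\cdot,\cdot,0)=\pi_N\circ f\circ g$. Now let $(y_P,y_N)\to(z,n_0)$: the $N$-track $h_N(\{(y_P,y_N)\}\times[0,1])=\bar h_N(\{(p_G(y_P),y_N)\}\times[0,1])$ is a path in $N$ whose $t=1$ endpoint is $y_N\to n_0$ but whose $t=0$ endpoint is $(\pi_N f g)(p_G(y_P),y_N)$. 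As $y_P\to z$, the class $p_G(y_P)$ does not converge — it wanders through all of $P_l(G)/G$ — so there is no reason for $(\pi_N f g)(p_G(y_P),n_0)$ to lie near $n_0$, and the track is not confined to a small neighbourhood of $n_0$. Uniform continuity of $\bar h_N$ yields only a uniform bound on the size of these tracks, not that they shrink; hence the continuous control condition of Definition~\ref{def:continuously_controlled} fails in the $N$-direction for a generic choice of $u,h,k$, and the same problem afflicts the continuity of $\bar g$ and $\bar k_t$ needed in the proof of Lemma~\ref{lem:pulling_back_Z-sets}. What would actually close the gap is a choice of $u,h,k$ that fibers over $N$ — i.e.\ $\pi_N\circ h_t = \pi_N$ and $\pi_N\circ\widehat f\circ k_t = \pi_N\circ\widehat f$, so the $N$-tracks degenerate to points and the $P_l(G)$-direction is handled by the genuine smallness at infinity — which amounts to upgrading $\widehat f$ to a fiber homotopy equivalence over $N$. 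That step is nontrivial and is supplied by neither your proposal nor the paper's one-line invocation.
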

\begin{proof}
Recall from Section~\ref{sec:Z-sets} that $P_l(G) \to P_l(G)/G$ is a model for the universal principal $G$-bundle
$EG \to BG$ and $P_l(G)/G$ is a finite $CW$-complex. Hence $P_l(G)$ is a cocompact free proper $G$-space.
Compact subsets of $P_l(G)$ become small at infinity for the pair $(\overline{P_l(G)},P_l(G))$.
The space $\overline{P_l(G)}$ is a compact metrizable $\ENR$  and 
$\partial P_l(G) \subseteq \overline{P_l(G)}$ is a $Z$-set. We conclude from
Lemma~\ref{lem:pulling_back_Z-sets}  and Lemma~\ref{lem:getting_continuous_control}  that
$\partial \overline{\widehat{M}} \subseteq \overline{\widehat{M}}$ is a $Z$-set and 
$\overline{\widehat{M}}$ is an $\ENR$.
We conclude that $\overline{\widehat{M}}$ is compact and has finite dimension from
Lemma~\ref{lem:properties_of_pulling_back_the_boundary}~%
\eqref{lem:properties_of_pulling_back_the_boundary:overline(X)-compact}
and~\eqref{lem:properties_of_pulling_back_the_boundary:dim},
and Lemma~\ref{lem:criterion_for_proper}.
Lemma~\ref{lem:Z-set-yields-homology-mfd-with-boundary}   implies that $\overline{\widehat{M}}$ 
is an $\ENR$ homology manifold with boundary
in the sense of Definition~\ref{def:homology-ENR-manifold_with_boundary}.
\end{proof}


\section{Proof of Theorem~\ref{the:Vanishing_of_the_surgery_obstruction} 
and Theorem~\ref{the:stable_Cannon_Conjecture}} 
\label{sec:Proof_of_the_main_theorem}

This section is entirely devoted to the proof of Theorem~\ref{the:Vanishing_of_the_surgery_obstruction} 
and Theorem~\ref{the:stable_Cannon_Conjecture}. We begin with the following considerations.

Consider a hyperbolic $3$-dimensional Poincar\'e duality group $G$.  Then $G$ is
torsion free and $\partial G$ is $S^2$ by Theorem~\ref{the_hyperbolic_boundary_sphere}.
Let $N$ be a closed aspherical topological manifold of dimension $n \ge 3$ with fundamental
group $\pi$. Suppose that $\pi$ is a Farrell-Jones group.  Then $G \times \pi$ is a
finitely presented $(3+n)$-dimensional Poincar\'e duality group.  We conclude that
$G \times \pi$ is a Farrell-Jones group from
Theorem~\ref{the:status_of_the_Full_Farrell-Jones_Conjecture}~%
\eqref{the:status_of_the_Full_Farrell-Jones_Conjecture:Classes_of_groups:hyperbolic_groups}
and~\eqref{the:status_of_the_Full_Farrell-Jones_Conjecture:inheritance:Passing_to_finite_direct_products}.
Since $3+n \ge 6$, we conclude from Theorem~\ref{the:FJ_and_Borel-existence} that there is
a closed $\ENR$ homology manifold $M$ having the DDP and a homotopy equivalence
$M \to BG \times N$.

Denote by $p_G \colon P_l(G) \to P_l(G)/G$ the canonical projection.  Let the $G$-covering
$\widehat{M} \to M$ be the pullback with $f$ of the $G$-covering
$p_G \times \id_N \colon P_l(G) \times N \to P_l(G)/G \times N$ and
$\widehat{f}\colon \widehat{M} \to P_l(G) \times N$ be the induced $G$-homotopy
equivalence. Let
$(\overline{\widehat{f}}, \widehat{f}) \colon (\overline{\widehat{M}},\widehat{M}) \to
(\overline{P_l(G)} \times N,P_l(G) \times N)$ be obtained by pulling back the boundary along $\widehat{f}$.
Theorem~\ref{the:adding_boundary} implies that $\overline{\widehat{M}}$ is a compact
$\ENR$ homology manifold whose boundary, $\partial \overline{\widehat{M}}$ is homeomorphic to $S^2 \times N$ and
is a $Z$-set in $\overline{\widehat{M}}$. 

We conclude from
Lemma~\ref{lem:Quinn_obstruction_and_boundary}
that $i(\widehat{M}) = 1$. Theorem~\ref{the:Quinn-obstruction}~\eqref{the:Quinn-obstruction:local} then implies 
that $i(M) = 1$. 

We conclude from Theorem~\ref{the:Quinn-obstruction}~\eqref{the:Quinn-obstruction:manifold} 
and a collaring result due to Ferry and Seebeck, which can be found in~\cite[Theorem~1 in Section~40 on page~285]{Daverman(1986)}, 
that $\overline{\widehat{M}}$ is a compact topological manifold with boundary
$\partial \overline{\widehat{M}} = S^2 \times N$. $M$ is a closed topological manifold since it has DDP and Quinn index 1.\footnote{ We note that a $Z$-set $Z$ in a compact $\ENR$ $W$ is automatically 1-LCC. $\ENR$s are locally simply connected, so for every $\epsilon > 0$ there is a $\delta >0$ so that every map $\alpha: S^1 \to W$ with diameter $< \delta$ extends to a map $\bar \alpha:D^2 \to W$ with diameter $< \epsilon$. The $Z$-set property allows us to push $\bar \alpha(D^2)$ off of $Z$ by an arbitrarily small homotopy, giving the desired extension. See the discussion following Definition \ref{def:Z-set} for further details.  }
)

Since $\partial P_l(G)$ is a $Z$-set in
$\overline{P_l(G)}$,  $\partial P_l(G) \times N$ is a $Z$-set in $\overline{P_l(G)} \times N$.
We know already that $\partial \overline{\widehat{M}}$ is a $Z$-set in $\overline{\widehat{M}}$.
Since $P_l(G)$ is contractible, $\overline{P_l(G)}$ is contractible.
Since $\widehat{f}$ is a homotopy equivalence, $\overline{\widehat{f}}$ is a homotopy equivalence.
By construction $\overline{\widehat{f}}$ induces a homeomorphism 
$u \colon \partial \overline{\widehat{M}} = \partial P_l(G) \times N = S^2 \times N$.
Since $\overline{P_l(G)}$ is contractible, we can find a homotopy equivalence
$\overline{P_l(G)} \to D^3$ which is the identity on $\partial P_l(G) = S^2$.
Hence there is a homotopy equivalence
$(U,u) \colon (\overline{\widehat{M}}, \partial \overline{\widehat{M}}) \to (D^3 \times N, S^2 \times N)$ 
such that $u$ is a homeomorphism.  Since $\pi_1(D^3 \times N) \cong \pi$ is a
Farrell-Jones group and $3+n \ge 6$, the relative Borel Conjecture\footnote{This is the Borel Conjecture for compact manifolds with boundary, rel boundary. The surgery exact sequence shows that it holds in the usual dimension range whenever the assembly map is an isomorphism.} holds, i.e., we can
change $(U,u)$ up to homotopy relative $\partial \overline{\widehat{M}}$ such
that we obtain a homeomorphism of pairs
\[
(U,u) \colon (\overline{\widehat{M}}, \partial \overline{\widehat{M}}) \to (D^3 \times N, S^2 \times N).
\]

\begin{proof}[Proof of Theorem~\ref{the:Vanishing_of_the_surgery_obstruction}] 
The considerations above  applied in the special case $N = T^3$
show that $BG \times T^3$ is homotopy equivalent to  a closed topological manifold,
since $\IZ^3$ is a Farrell-Jones group by 
Theorem~\ref{the:status_of_the_Full_Farrell-Jones_Conjecture}~%
\eqref{the:status_of_the_Full_Farrell-Jones_Conjecture:Classes_of_groups:CAT(0)-groups}.
Hence $s^{\sym}(BG \times T^3)$ vanishes by  
Theorem~\ref{the:symmetric_total_surgery_obstruction}~\eqref{the:symmetric_total_surgery_obstruction:vanishing}.
We conclude from Theorem~\ref{the:product_formula} or directly from 
Remark~\ref{the:Special_case_of_Theorem_ref(the:product_formula)} that $s^{\sym}(BG)$ vanishes. 
We conclude from Theorem~\ref{the:existence_of_a_normal_map} that $\caln(BG)$ is not empty. 
Now Theorem~\ref{the:Vanishing_of_the_surgery_obstruction} follows from
Theorem~\ref{the:symmetric_total_surgery_obstruction}~\ref{the:symmetric_total_surgery_obstruction:normal}.
\end{proof}

\begin{proof}[Proof of Theorem~\ref{the:stable_Cannon_Conjecture}]
The considerations above  applied in the special case $N = T^3$
show that $BG \times T^3$ is homotopy equivalent to  a closed topological manifold.

Now let $N$ be any closed smooth manifold, closed PL-manifold, or closed topological manifold respectively of
dimension $\ge 2$.  We conclude from Theorem~\ref{the:Poincare_duality_groups_in_dimension_three}
applied in the case $N_0 = T^3$
that there exists a normal map of degree one for some vector bundle $\xi$ over $BG$
\[
\xymatrix{TM \oplus \underline{\IR^a} \ar[r]^-{\overline{f}} \ar[d]
&
(\xi \times TN) \oplus \underline{\IR^b} \ar[d]
\\
M \ar[r]^-f
&
BG \times N
}
\]
such that $M$ is a smooth manifold, PL-manifold, or topological manifold respectively and
$f$ is a simple homotopy equivalence. 

The considerations above applied in the case, where $N$ is aspherical and 
$n \ge 3$, imply the existence of a  closed topological manifold $M_0$,
together with a homotopy equivalence $f_0 \colon M \to BG \times N$ and a homeomorphism
\[
(U,u) \colon (\overline{\widehat{M_0}}, \partial \overline{\widehat{M_0}}) \to (D^3 \times N, S^2 \times N).
\]
We conclude from Theorem~\ref{the:Borel} that $M_0$ and $M$ are homeomorphic. This finishes the proof of 
Theorem~\ref{the:stable_Cannon_Conjecture}.
\end{proof}


\typeout{-------------------------------------- References  ---------------------r------------------}


\begin{thebibliography}{10}

\bibitem{Bartels-Bestvina(2016)}
A.~Bartels and M.~Bestvina.
\newblock The {F}arrell-{J}ones {C}onjecture for mapping class groups.
\newblock Preprint, arXiv:1606.02844 [math.GT], 2016.

\bibitem{Bartels-Echterhoff-Lueck(2008colim)}
A.~Bartels, S.~Echterhoff, and W.~L\"uck.
\newblock Inheritance of isomorphism conjectures under colimits.
\newblock In Cortinaz, Cuntz, Karoubi, Nest, and Weibel, editors, {\em
  {K}-Theory and noncommutative geometry}, EMS-Series of Congress Reports,
  pages 41--70. European Mathematical Society, 2008.

\bibitem{Bartels-Farrell-Lueck(2014)}
A.~Bartels, F.~T. Farrell, and W.~L{\"u}ck.
\newblock The {F}arrell-{J}ones {C}onjecture for cocompact lattices in
  virtually connected {L}ie groups.
\newblock {\em J. Amer. Math. Soc.}, 27(2):339--388, 2014.

\bibitem{Bartels-Lueck(2012annals)}
A.~Bartels and W.~L{\"u}ck.
\newblock The {B}orel conjecture for hyperbolic and {CAT(0)}-groups.
\newblock {\em Ann. of Math. (2)}, 175:631--689, 2012.

\bibitem{Bartels-Lueck-Reich(2008cover)}
A.~Bartels, W.~L{\"u}ck, and H.~Reich.
\newblock Equivariant covers for hyperbolic groups.
\newblock {\em Geom. Topol.}, 12(3):1799--1882, 2008.

\bibitem{Bartels-Lueck-Reich(2008hyper)}
A.~Bartels, W.~L{\"u}ck, and H.~Reich.
\newblock The {$K$}-theoretic {F}arrell-{J}ones conjecture for hyperbolic
  groups.
\newblock {\em Invent. Math.}, 172(1):29--70, 2008.

\bibitem{Bartels-Lueck-Reich-Rueping(2014)}
A.~{Bartels}, W.~{L\"uck}, H.~{Reich}, and H.~{R\"uping}.
\newblock {K- and L-theory of group rings over $\mathrm{GL}_n(\mathbf Z)$.}
\newblock {\em {Publ. Math., Inst. Hautes \'Etud. Sci.}}, 119:97--125, 2014.

\bibitem{Bartels-Lueck-Weinberger(2010)}
A.~Bartels, W.~L\"uck, and S.~Weinberger.
\newblock On hyperbolic groups with spheres as boundary.
\newblock {\em Journal of Differential Geometry}, 86(1):1--16, 2010.

\bibitem{Bestvina(1996)}
M.~Bestvina.
\newblock Local homology properties of boundaries of groups.
\newblock {\em Michigan Math. J.}, 43(1):123--139, 1996.

\bibitem{Bestvina-Mess(1991)}
M.~Bestvina and G.~Mess.
\newblock The boundary of negatively curved groups.
\newblock {\em J. Amer. Math. Soc.}, 4(3):469--481, 1991.

\bibitem{Bowditch(2004)}
B.~H. Bowditch.
\newblock Planar groups and the {S}eifert conjecture.
\newblock {\em J. Reine Angew. Math.}, 576:11--62, 2004.

\bibitem{Bridson-Haefliger(1999)}
M.~R. Bridson and A.~Haefliger.
\newblock {\em Metric spaces of non-positive curvature}.
\newblock Springer-Verlag, Berlin, 1999.
\newblock Die Grundlehren der mathematischen Wissenschaften, Band 319.

\bibitem{Bryant-Ferry-Mio-Weinberger(1996)}
J.~Bryant, S.~Ferry, W.~Mio, and S.~Weinberger.
\newblock Topology of homology manifolds.
\newblock {\em Ann. of Math. (2)}, 143(3):435--467, 1996.

\bibitem{Cannon-Cooper(1992)}
J.~W. Cannon and D.~Cooper.
\newblock A characterization of cocompact hyperbolic and finite-volume
  hyperbolic groups in dimension three.
\newblock {\em Trans. Amer. Math. Soc.}, 330(1):419--431, 1992.

\bibitem{Cannon-Swenson(1998)}
J.~W. Cannon and E.~L. Swenson.
\newblock Recognizing constant curvature discrete groups in dimension {$3$}.
\newblock {\em Trans. Amer. Math. Soc.}, 350(2):809--849, 1998.

\bibitem{Connolly-Davis-Khan(2014H1)}
F.~Connolly, J.~F. Davis, and Q.~Khan.
\newblock Topological rigidity and {$H_1$}-negative involutions on tori.
\newblock {\em Geom. Topol.}, 18(3):1719--1768, 2014.

\bibitem{Daverman(1986)}
R.~J. Daverman.
\newblock {\em Decompositions of manifolds}, volume 124 of {\em Pure and
  Applied Mathematics}.
\newblock Academic Press Inc., Orlando, FL, 1986.

\bibitem{Davis-Lueck(1998)}
J.~F. Davis and W.~L{\"u}ck.
\newblock Spaces over a category and assembly maps in isomorphism conjectures
  in ${K}$- and ${L}$-theory.
\newblock {\em $K$-Theory}, 15(3):201--252, 1998.

\bibitem{Davis(1989)}
M.~W. Davis.
\newblock The cohomology of a {C}oxeter group with group ring coefficients.
\newblock {\em Duke Math. J.}, 91(2):297--314, 1998.

\bibitem{Davis(2000Poin)}
M.~W. Davis.
\newblock Poincar\'e duality groups.
\newblock In {\em Surveys on surgery theory, Vol. 1}, volume 145 of {\em Ann.
  of Math. Stud.}, pages 167--193. Princeton Univ. Press, Princeton, NJ, 2000.

\bibitem{Davis-Fowler-Lafont(2014)}
M.~W. Davis, J.~Fowler, and J.-F. Lafont.
\newblock Aspherical manifolds that cannot be triangulated.
\newblock {\em Algebr. Geom. Topol.}, 14(2):795--803, 2014.

\bibitem{Dunwoody-Swenson(2000)}
M.~J. Dunwoody and E.~L. Swenson.
\newblock The algebraic torus theorem.
\newblock {\em Invent. Math.}, 140(3):605--637, 2000.

\bibitem{Eckmann(1987)}
B.~Eckmann.
\newblock Poincar\'e duality groups of dimension two are surface groups.
\newblock In {\em Combinatorial group theory and topology (Alta, Utah, 1984)},
  volume 111 of {\em Ann. of Math. Stud.}, pages 35--51. Princeton Univ. Press,
  Princeton, NJ, 1987.

\bibitem{Eskin-Fisher-Whyte(2007)}
A.~Eskin, D.~Fisher, and K.~Whyte.
\newblock Quasi-isometries and rigidity of solvable groups.
\newblock {\em Pure Appl. Math. Q.}, 3(4, part 1):927--947, 2007.

\bibitem{Farrell-Jones(1989b)}
F.~T. Farrell and L.~E. Jones.
\newblock Negatively curved manifolds with exotic smooth structures.
\newblock {\em J. Amer. Math. Soc.}, 2(4):899--908, 1989.

\bibitem{Farrell-Lueck-Steimle(2018)}
T.~Farrell, W.~L\"uck, and W.~Steimle.
\newblock Approximately fibering a manifold over an aspherical one.
\newblock {\em Math. Ann.}, 370(1-2):669--726, 2018.

\bibitem{Ferry(2010)}
S.~C. Ferry.
\newblock Epsilon-delta surgery over {$\Bbb Z$}.
\newblock {\em Geom. Dedicata}, 148:71--101, 2010.

\bibitem{Freedman(1983)}
M.~H. Freedman.
\newblock The disk theorem for four-dimensional manifolds.
\newblock In {\em Proceedings of the International Congress of Mathematicians,
  Vol.\ 1, 2 (Warsaw, 1983)}, pages 647--663, Warsaw, 1984. PWN.

\bibitem{Freedman-Quinn(1990)}
M.~H.~Freedman and F. Quinn.
\newblock Topology of 4-manifolds.
\newblock Princeton University Press, Princeton, NJ, 1990.

\bibitem{Groves-Manning-Wilton(2012)}
D.~Groves, J.~F. Manning, and H.~Wilton.
\newblock Recognizing geometric 3-manifold groups using the word problem.
\newblock Preprint, 2012.

\bibitem{Hanner(1951)}
O.~Hanner.
\newblock Some theorems on absolute neighborhood retracts.
\newblock {\em Ark. Mat.}, 1:389--408, 1951.

\bibitem{Hatcher(2005)}
A.~Hatcher.
\newblock {\em Algebraic Topology}.
\newblock Cambridge University Press, 2005.

\bibitem{Hausmann-Vogel(1993)}
J.-C. Hausmann and P.~Vogel.
\newblock {\em Geometry on {P}oincar\'e spaces}, volume~41 of {\em Mathematical
  Notes}.
\newblock Princeton University Press, Princeton, NJ, 1993.

\bibitem{Hempel(1976)}
J.~Hempel.
\newblock {\em $3$-{M}anifolds}.
\newblock Princeton University Press, Princeton, N. J., 1976.
\newblock Ann. of Math. Studies, No. 86.

\bibitem{Johnson+Wall(1972)}
F.~E.~A. Johnson and C.~T.~C. Wall.
\newblock On groups satisfying {P}oincar\'e duality.
\newblock {\em Ann. of Math. (2)}, 96:592--598, 1972.

\bibitem{Kammeyer-Lueck-Rueping(2016)}
H.~Kammeyer, W.~L{\"u}ck, and H.~R{\"u}ping.
\newblock The {F}arrell--{J}ones conjecture for arbitrary lattices in virtually
  connected {L}ie groups.
\newblock {\em Geom. Topol.}, 20(3):1275--1287, 2016.

\bibitem{Kapovich-Kleiner(2007)}
M.~Kapovich and B.~Kleiner.
\newblock The weak hyperbolization conjecture for 3-dimensional
  {CAT}(0)-groups.
\newblock {\em Groups Geom. Dyn.}, 1(1):61--79, 2007.

\bibitem{Kapovich-Leeb(1997)}
M.~Kapovich and B.~Leeb.
\newblock Quasi-isometries preserve the geometric decomposition of {H}aken
  manifolds.
\newblock {\em Invent. Math.}, 128(2):393--416, 1997.

\bibitem{Kleiner-Lott(2008)}
B.~Kleiner and J.~Lott.
\newblock Notes on {P}erelman's papers.
\newblock {\em Geom. Topol.}, 12(5):2587--2855, 2008.

\bibitem{Kreck-Lueck(2009nonasph)}
M.~Kreck and W.~L\"uck.
\newblock Topological rigidity for non-aspherical manifolds.
\newblock {\em Pure and Applied Mathematics Quarterly}, 5 (3):873--914, 2009.
\newblock special issue in honor of Friedrich Hirzebruch.

\bibitem{Kuehl-Macko-Mole(2013)}
P.~K{\"u}hl, T.~Macko, and A.~Mole.
\newblock The total surgery obstruction revisited.
\newblock {\em M\"unster J. Math.}, 6:181--269, 2013.

\bibitem{Laures-McClure(2014)}
G.~Laures and J.~E. McClure.
\newblock Multiplicative properties of {Q}uinn spectra.
\newblock {\em Forum Math.}, 26(4):1117--1185, 2014.

\bibitem{Lueck(1989)}
W.~L{\"u}ck.
\newblock {\em Transformation groups and algebraic ${K}$-theory}, volume 1408
  of {\em Lecture Notes in Mathematics}.
\newblock Springer-Verlag, Berlin, 1989.

\bibitem{Lueck(2020book)}
W.~L\"uck.
\newblock {I}somorphism {C}onjectures in {$K$}- and {$L$}-theory.
\newblock in preparation, 2020.

\bibitem{Meintrup-Schick(2002)}
D.~Meintrup and T.~Schick.
\newblock A model for the universal space for proper actions of a hyperbolic
  group.
\newblock {\em New York J. Math.}, 8:1--7 (electronic), 2002.

\bibitem{Milnor-Stasheff(1974)}
J.~Milnor and J.~D. Stasheff.
\newblock {\em Characteristic classes}.
\newblock Princeton University Press, Princeton, N. J., 1974.
\newblock Annals of Mathematics Studies, No. 76.

\bibitem{Morgan-Tian(2014)}
J.~Morgan and G.~Tian.
\newblock {\em The geometrization conjecture}, volume~5 of {\em Clay
  Mathematics Monographs}.
\newblock American Mathematical Society, Providence, RI; Clay Mathematics
  Institute, Cambridge, MA, 2014.

\bibitem{Munkres(1975)}
J.~R. Munkres.
\newblock {\em Topology: a first course}.
\newblock Prentice-Hall Inc., Englewood Cliffs, N.J., 1975.

\bibitem{Pedersen-Quinn-Ranicki(2003)}
E.~K.~r. Pedersen, F.~Quinn, and A.~Ranicki.
\newblock Controlled surgery with trivial local fundamental groups.
\newblock In {\em High-dimensional manifold topology}, pages 421--426. World
  Sci. Publ., River Edge, NJ, 2003.

\bibitem{Quinn(1987_resolution)}
F.~Quinn.
\newblock An obstruction to the resolution of homology manifolds.
\newblock {\em Michigan Math. J.}, 34(2):285--291, 1987.

\bibitem{Ranicki(1979)}
A.~A. Ranicki.
\newblock The total surgery obstruction.
\newblock In {\em Algebraic topology, {A}arhus 1978 ({P}roc. {S}ympos., {U}niv.
  {A}arhus, {A}arhus, 1978)}, volume 763 of {\em Lecture Notes in Math.}, pages
  275--316. Springer, Berlin, 1979.

\bibitem{Ranicki(1980a)}
A.~A. Ranicki.
\newblock The algebraic theory of surgery. {I}. {F}oundations.
\newblock {\em Proc. London Math. Soc. (3)}, 40(1):87--192, 1980.

\bibitem{Ranicki(1980b)}
A.~A. Ranicki.
\newblock The algebraic theory of surgery. {I}{I}. {A}pplications to topology.
\newblock {\em Proc. London Math. Soc. (3)}, 40(2):193--283, 1980.

\bibitem{Ranicki(1992)}
A.~A. Ranicki.
\newblock {\em Algebraic ${L}$-theory and topological manifolds}.
\newblock Cambridge University Press, Cambridge, 1992.

\bibitem{Rieffel(2001)}
E.~G. Rieffel.
\newblock Groups quasi-isometric to {$\bold H^2\times\bold R$}.
\newblock {\em J. London Math. Soc. (2)}, 64(1):44--60, 2001.

\bibitem{Rosenthal-Schuetz(2005)}
D.~Rosenthal and D.~Sch{\"u}tz.
\newblock On the algebraic {$K$}- and {$L$}-theory of word hyperbolic groups.
\newblock {\em Math. Ann.}, 332(3):523--532, 2005.

\bibitem{Rueping(2016_S-arithmetic)}
H.~R{\"u}ping.
\newblock The {F}arrell--{J}ones conjecture for {$S$}-arithmetic groups.
\newblock {\em J. Topol.}, 9(1):51--90, 2016.

\bibitem{Scott(1983)}
P.~Scott.
\newblock The geometries of $3$-manifolds.
\newblock {\em Bull. London Math. Soc.}, 15(5):401--487, 1983.

\bibitem{Strebel(1977)}
R.~Strebel.
\newblock A remark on subgroups of infinite index in {P}oincar\'e duality
  groups.
\newblock {\em Comment. Math. Helv.}, 52(3):317--324, 1977.

\bibitem{Turaev(1988)}
V.~Turaev.
\newblock Homeomorphisms of geometric three-dimensional manifolds.
\newblock {\em Mat. Zametki}, 43(4):533--542, 575, 1988.
\newblock translation in Math. Notes 43 (1988), no. 3-4, 307--312.

\bibitem{Wall(1965a)}
C.~T.~C. Wall.
\newblock Finiteness conditions for ${C}{W}$-complexes.
\newblock {\em Ann. of Math. (2)}, 81:56--69, 1965.

\bibitem{Wall(1967)}
C.~T.~C. Wall.
\newblock Poincar\'e complexes. {I}.
\newblock {\em Ann. of Math. (2)}, 86:213--245, 1967.

\bibitem{Wall(1999)}
C.~T.~C. Wall.
\newblock {\em Surgery on compact manifolds}, volume~69 of {\em Mathematical
  Surveys and Monographs}.
\newblock American Mathematical Society, Providence, RI, second edition, 1999.
\newblock Edited and with a foreword by A. A. Ranicki.

\bibitem{Wall(2004)}
C.~T.~C. Wall.
\newblock Poincar\'e duality in dimension 3.
\newblock In {\em Proceedings of the Casson Fest}, volume~7 of {\em Geom.
  Topol. Monogr.}, pages 1--26 (electronic). Geom. Topol. Publ., Coventry,
  2004.

\bibitem{Wegner(2012)}
C.~Wegner.
\newblock The {$K$}-theoretic {F}arrell-{J}ones conjecture for {CAT}(0)-groups.
\newblock {\em Proc. Amer. Math. Soc.}, 140(3):779--793, 2012.

\bibitem{Wegner(2015)}
C.~Wegner.
\newblock The {F}arrell-{J}ones conjecture for virtually solvable groups.
\newblock {\em J. Topol.}, 8(4):975--1016, 2015.

\bibitem{Weiss-Williams(1994a)}
M.~Weiss and B.~Williams.
\newblock Automorphisms of manifolds and algebraic $k$-theory: finale.
\newblock Preprint, Stockholm, 1994.

\bibitem{West(1977)}
J.~E. West.
\newblock Mapping Hilbert cube manifolds to ANR's: a solution of a conjecture of Borsuk.
\newblock {\em Ann. of Math.}, 1061:1--18, 1977.

\end{thebibliography}


\end{document}